\def\shadowbox{\hbox{\rule[-0.0ex]{0.1ex}{1.2ex}%
\hspace{-0.1ex}\rule[-0.0ex]{1.2ex}{0.1ex}%
\hspace{0.0ex}\rule[-0.0ex]{0.1ex}{1.2ex}\hspace{-1.3ex}%
\rule[1.15ex]{1.25ex}{0.1ex}\hspace{-0.0ex}\rule[-0.25ex]{0.3ex}{1.1ex}%
\hspace{-1.2ex}\rule[-0.25ex]{1.1ex}{0.25ex}}}
\def\qed{\ifmmode \hbox{\hfill\shadowbox}
     \else \hphantom{x}\hfill\shadowbox \fi}
\newtheorem{theorem}{Theorem}[section]
\newtheorem{lemma}[theorem]{Lemma}
\newtheorem{definition}[theorem]{Definition}
\newtheorem{proposition}[theorem]{Proposition}
\newcommand{\nc}{\normalcolor}
\def\R {\mathbb{R}}
\def\C {\mathbb{C}}
\def\G {\mathcal{G}}
\def\Cp {\mathbb{C}^+}
\def\E {\mathbb{E}}
\def\Eek {\mathcal{E}^{(\mu)}_{\kappa,\eta}}
\def\Eck {\mathcal{E}^{(\mu)}_{\kappa}}
\def\EMe {\mathcal{E}^{(\lambda)}_{\kappa,\eta}}
\def\EMc {\mathcal{E}^{(\lambda)}_{\kappa}}
\def\P {\mathbb{P}}
\def\N {\mathcal{N}}
\def\To {\mathcal{T}_1}
\def\Tt {\mathcal{T}_2}
\def\Ieta {I_{\eta}}
\def\X {\mathcal{X}}
\def\B {\mathcal{B}}
\def\oa{\frac{1}{a}}
\def\on {\frac{1}{n}}
\def\Ys {Y^*}
\def\YYsh {(YY^*)^{-1/2}}
\def\dj {d_{j}}
\def\xj {x_{j}}
\def\bo {\overline{b}}
\def\rj {r_j}
\def\rjs {r_{j}^*}
\def\mn {m_n}
\def\mnu {\underline{m}_n}
\def\munder {\underline{m}}
\def\mun {\underline{m}}
\def\munp {\underline{m}_p}
\def\mj {m_{(j)}}
\def\mju {\underline{m}_{(j)}}
\def\Bj {B_{(j)}}
\def\Bn {B_n}
\def\tr {\textnormal{tr}}
\def\Ti {T^{-1}}
\def\Tih {T^{-\frac{1}{2}}}
\def\mM {m_{M}}
\def\mP {m_{p}}
\def\lm {\lambda_-}
\def\lp {\lambda_+}
\def\XXs {XX^*}
\def\Xs {X^{*}}
\def\YYs {YY^{*}}
\def\onYYsi {\left(\frac{1}{n}YY^{*}\right)^{-1}}
\def\Ys {Y^*}
\def\md {m_{\delta}}
\def\mud {\underline{m}_{\delta}}
\def\Th {T^{1/2}}
\def\Bjzi {(B_{(j)}-zI)^{-1}}
\def\Bnzi {(B_n-zI)^{-1}}
\def\oh {\frac{1}{2}}
\def\mlow {\mu_{-}}
\def\mhigh {\mu_{+}}
\def\fp {f_{p}}
\def\oox {\frac{1}{1+x}}
\def\finv {f_{\textnormal{Inv}}}
\def\mk {\mu_k}
\def\Matrix {\left( \Xn\Xn^*+\Yn\Yn^*\right)^{-\oh}\Yn\Yn^*\left( \Xn\Xn^*+\Yn\Yn^*\right)^{-\oh} }
\def\Matrix {\left( XX^*+YY^*\right)^{-\oh}YY^*\left( XX^*+YY^*\right)^{-\oh} }
\def\Xo {X_{(1)}}
\def\Xj {X_{(j)}}
\def\sm {s_{\min}}
\def\sM {s_{\max}}
\def\ot {\overline{t}}
\def\otau {\overline{\tau}}
\def\fdl {\finv (\lambda)d\lambda}
\def\hdl {\frac{\lambda}{1+\lambda\mud(z)}}
\def\hpl {\frac{\lambda}{1+\lambda\munp(z)}}
\def\fdl {\finv (\lambda)d\lambda}
\def\am {a_-}
\def\aM {a_+}
\def\Kz {K_0}
\numberwithin{theorem}{section}
\begin{document}

\setcounter{tocdepth}{1}

\title{\Large Local Eigenvalue Density for General MANOVA Matrices}

\author{L\'aszl\'o  Erd\H{o}s\thanks{Institute of Science and Technology Austria, Am Campus 1, Klosterneuburg, 
A-3400, Austria. On leave from Institute of Mathematics, University of
Munich, Theresienstr. 39, D-80333 Munich, Germany.
Email: lerdos@ist.ac.at. Partially supported by SFB-TR12 Grant of the
German Research Council.}
$\;$and  Brendan Farrell\thanks{Computing and Mathematical Sciences, MC 305-16, 
California Institute of Technology,
1200 E. California Blvd., Pasadena, CA 91125, USA. \newline Email: farrell@cms.caltech.edu. Partially supported by Joel A. Tropp under ONR awards N00014-08-1-0883 and
N00014-11-1002 and a Sloan Research Fellowship. } }

%\date{April 14, 2013}
\maketitle

\begin{abstract}
We consider random $n\times n$ matrices of the form $$\Matrix,$$ where $X$ and $Y$ have independent entries
with zero mean and variance one.
These matrices are the natural generalization of the Gaussian case, which are known as
 MANOVA matrices and which have joint eigenvalue density 
given by the  third  classical ensemble, the Jacobi ensemble. 
We show that, away from the spectral edge, the eigenvalue density converges
to  the limiting density of the Jacobi ensemble even 
on the shortest possible  scales of order $1/n$ 
 (up to $\log n$ factors).  This  result is the 
 analogue of the  local  Wigner semicircle law
and the  local  Marchenko-Pastur law for general MANOVA matrices.
\end{abstract}

{\bf AMS Subject Classification:} 15B52,  62H86

\vspace{.2in}

{\bf Keywords:}  MANOVA random matrix, Jacobi ensemble, Local density 
of eigenvalues.

\section{Introduction}

The three classical families of eigenvalue distributions of  Gaussian random matrices
are the Hermite, Laguerre and Jacobi ensembles. Hermite ensembles correspond
to Wigner matrices, $X=X^*$; Laguerre ensembles describe  sample covariance matrices, $XX^*$.
The random $n\times n$ matrices yielding the Jacobi ensembles have the form
\begin{equation}
\Matrix, \label{defmatrix}
\end{equation}
where  $X$ and $Y$ are $n\times [b n]$ and $n\times [a n]$ matrices with independent standard Gaussian entries.
Here  $a,b>1$ are fixed parameters of the model, $n$ is a large number, eventually
tending to infinity, and $[\cdot]$ denotes the integer part. The matrix entries can be real, complex
or self-dual quaternions, corresponding to the three symmetry classes, commonly distinguished
by the parameter $\beta=1,2,4$, respectively. The results in this paper are insensitive to
the symmetry class and for simplicity
we will consider the complex case $(\beta=2)$.

Matrices of the form~\eqref{defmatrix} are used in statistics for multivariate analysis
 of variance to determine correlation coefficients  (Section 3.3 of~\cite{Mui82}). 
This analysis is called MANOVA, though it has been largely limited to the special 
case when the entries of~\eqref{defmatrix} are Gaussian.

In this paper we address the case when the entries of $X$ and $Y$ in~\eqref{defmatrix} are independent
but have general distribution with zero mean and unit variance.
 In particular, the matrix entries are not required to be identically distributed.
We will call such a matrix with general entries a \emph{general MANOVA matrix}. 

Similarly to the Wigner and sample covariance matrices, the  joint eigenvalue density of~\eqref{defmatrix} is
explicitly known only for the Gaussian case. When the entries are standard complex Gaussians,
 it is given by
\begin{equation}
\textnormal{density}(\lambda_1,\ldots,\lambda_n)
=C_{a,b,n}\prod_{j=1}^n\lambda_j^{(a-1)n}(1-\lambda_j)^{(b-1)n}\prod_{1\leq j<k\leq n}|\lambda_j-\lambda_k|^2,\;\;\lambda_j\in[0,1],\label{jacobi}
\end{equation}
where $C_{a,b,n}$ is a normalizing constant.  The density has a similar form 
with different exponents when the matrix entries are real, or self-dual quaternions, see Section 3.6 of~\cite{For10}. 
Equation~\eqref{jacobi} defines the Jacobi ensemble, 
where the name refers to the form of the polynomial term in front of the Vandermonde determinant in \eqref{jacobi}.

The empirical density of the eigenvalues of~\eqref{defmatrix} -- equivalently,
the one-point correlation function of \eqref{jacobi} -- 
 converges almost surely,  as $n\rightarrow \infty$ , to 
the distribution with density given by 
\begin{equation}
 f_M(x)=  (a+b)\frac{\sqrt{(x-\lm)(\lp-x)}}{2\pi x(1-x)}\cdot I_{[\lm,\lp]}(x)\label{densityM}
\end{equation}
where 
\begin{equation}
 \lambda_{\pm}=\left(\sqrt{\frac{a}{a+b}\left(1-\frac{1}{a+b}\right)}\pm\sqrt{\frac{1}{a+b}
\left(1-\frac{a}{a+b}\right)}\right)^2.\label{lpm}
\end{equation}
The density $f_M$ was determined by Wachter~\cite{Wac80} and is discussed in Section 3.6 of~\cite{For10}. 
Note that $\lambda_{\pm}\in  (0,1) $, so that $f_M$ is supported on a 
compact subinterval of $(0,1)$. 
We will refer to $f_M(x)$ as the \emph{limiting distribution} of the eigenvalues of~\eqref{defmatrix} 
or as the MANOVA distribution. 

While the joint eigenvalue density \eqref{jacobi} is valid only for the Gaussian case, the
limiting empirical density is expected to be correct for general distributions  as well,
similarly to the universality of the Wigner semicircle law for Wigner matrices
or the Marchenko-Pastur (MP) law for sample covariance matrices.
Thus, general MANOVA matrices, the Jacobi ensemble and the distribution $f_M$ constitute a 
triplet analogous to general Wigner matrices, 
the Hermite ensemble and the semicircle law or 
sample covariance matrices, the Laguerre ensemble and the Marchenko-Pastur law.

Universality results have been intensely pursued for the latter two types 
of matrices, starting from the fundamental work of Wigner~\cite{Wig55} and Marchenko-Pastur~\cite{MP67}
who identified the corresponding distributions. These first results were on the macroscopic scale; the
empirical density on spectral scales containing $O(n)$ eigenvalues were shown to converge
in a weak sense to the limiting law. Recently local versions of these
fundamental laws have also been established on the shortest possible scale, containing
$O(\varphi(n))$ eigenvalues, where $\varphi(n)$ is a factor logarithmic in $n$.
For Wigner matrices it was achieved first in the bulk~\cite{ESY09, ESY10}
then optimally up to the edges~\cite{EYY11a}. For sample covariance matrices
the optimal scale in the bulk was reached in~\cite{ESYY10}, followed
by the optimal result up to the edge in~\cite{PY13}. Related results were
also obtained in~\cite{TV11b,TV11a,Pec12}. 

In this paper we prove  the local convergence of the density 
on the optimal scale  for the general MANOVA ensembles
in the bulk spectrum. This establishes the analogue
of the results \cite{ESY10, ESYY10} for these ensembles.
We remark that the convergence even on the largest
scale, i.e. the analogue of~\cite{Wig55, MP67}, has not
been known before  although it would essentially follow
from~\cite{SB95} if combined with the recent result in~\cite{PY13}.
   The main novelty of the current paper is
 the effective  stability analysis of the self-consistent equation 
for the Stieltjes transform of the density \eqref{implicitequation}.

Precise results on the local density have opened up the route
to establish the full universality of local eigenvalue statistics
for Wigner and sample covariance matrices, including 
precise identification of the statistics of consecutive gaps.
A new general method based   on the Dyson Brownian motion (DBM)
was first introduced in~\cite{ESY11}.  It is applicable to all symmetry classes~\cite{ESYY10},
to  very general distributions~\cite{EYY11a} and to sample covariance matrices~\cite{PY13}.
The local semicircle law (or the local MP law) is a basic input in all these works.
Local density results have also inspired an alternative
route to universality~\cite{TV11a, TV11b} that is applicable for the
complex case, $\beta=2$. 

In light of these developments for the Wigner and sample covariance matrices, 
the current work is the first step towards establishing 
the full universality of eigenvalue statistics
for the general MANOVA ensemble.

\section{Statement of the Main Result}

Given two positive constants  $\gamma=(\gamma_1, \gamma_2)$, we say that a
complex random variable $Z$ is $\gamma$-subexponential if it satisfies the following conditions:
\begin{equation}
\left\{ \begin{array}{l}
\E\; Z=0\\
\E \;|Z|^2=1\\
\P(|Z|\geq t^{\gamma_1})\leq \gamma_2 e^{-t}\;\;\textnormal{for all}\;\; t>0.\label{subexp}
\end{array}\right.
\end{equation}
A set of random variables is uniformly $\gamma$-subexponential 
if each random variable is $\gamma$-subexponential for a common $\gamma$.  Assuming that the matrix
elements of  $X$ and $Y$ are independent,  uniformly $\gamma$-subexponential random variables,
we will prove that the empirical distribution of the eigenvalues of \eqref{defmatrix} in the bulk 
converges on small scales to~\eqref{densityM} as $n\to \infty$.

The main tool for this approach is the  Stieltjes transform. 
The Stieltjes transform of a real random variable with distribution function $F$ 
is a function $\C^+\rightarrow\C^+$ defined by
\begin{equation}\label{ST}
m(z)=\int \frac{1}{t-z}dF(t).
\end{equation}
If the random variable has a density, then we also refer to the Stieltjes transform of the density. 
The  Stieltjes transform of $f_M$ is 
\begin{equation}\label{mN}
\mM(z)=  \frac{(2-a-b)z+a-1+\sqrt{ (a+b)^2z^2-(a+b)(2(a+1)-\frac{a}{a+b})z+(a-1)^2}  }{2 z(1-z)}.
\end{equation}
This formula is derived in Appendix~\ref{mNcomp}. \nc 

For self-adjoint matrices, we misuse notation and refer to the function 
\begin{equation*}
m_A(z)=\on \tr (A-zI)^{-1}
\end{equation*}
as the Stieltjes transform of the self-adjoint, $n\times n$ matrix $A$. 
If $\lambda_1,\ldots,\lambda_n$ are the eigenvalues of $A$, then we equivalently have 
\begin{equation*}
m_A(z)=\on\sum_{k=1}^n\frac{1}{\lambda_k-z},
\end{equation*}
which is the Stieltjes transform of the empirical measure.

Our main result shows that the eigenvalues of the general MANOVA matrix behave close to what is indicated by $\mM(z)$ and 
$f_M$ in the bulk with high probability.  
To state the result, we must formalize the term \emph{bulk}.  
Following this definition we state the  main theorem, Theorem~\ref{thmmanova}. 

\begin{definition}
Let  $\lp$ and $\lm$ be as given in~\eqref{lpm}.  
Define
\begin{equation*}
\EMe:=\left\{E+i\eta \in\C^+:\;E\in (\lm,\lp)\;\textnormal{ and }\;(\lp-E)(E-\lm)\geq \kappa\right\}
\end{equation*}
and set $\EMc=\mathcal{E}^{(\lambda)}_{\kappa,0}$. 
\end{definition}

\begin{theorem}\label{thmmanova}  Fix two real parameters $a, b>1$.
Let $X$ be an $n\times an$ random matrix and let $Y$ be an $n\times b n$ random matrix independent of $X$.
We assume that both matrices have  independent entries satisfying~\eqref{subexp} for a common 
$\gamma = (\gamma_1, \gamma_2)$. 
Let $m_{n,M}(z)$ be the Stieltjes transform of  the general MANOVA matrix 
\begin{equation}
\Matrix\label{matthm}. 
\end{equation}

\noindent $i)$ 
Then   for  any $\kappa, \eta >0$ with
 $\eta>\frac{1}{n\kappa^2}(\log n)^{2C\log\log n}$, we have
\begin{equation}\label{mmain}
 \P\left(\sup_{z\in\EMe} |m_{n,M}(z)-m_M(z)|> \frac{(\log n)^{C\log\log n}}{\sqrt{\eta n  \kappa}}\right)<n^{-c\log\log n}
\end{equation}
for all $n\ge n_0$ large enough and for  constants $C,c>0$.  Here
$n_0$, $C$ and $c$ depend only  on $\gamma$. 

\noindent $ii)$ Let $\mathcal{N}_{\eta}(E)$ denote the number of eigenvalues of~\eqref{matthm} 
contained in $[E-\frac{\eta}{2},E+\frac{\eta}{2}]$ 
and assume  $\eta \ge \frac{1}{n\kappa^2}(\log n)^{3C\log\log n}$. 
Then
\begin{equation}\label{Nmain}
\P\left( \sup_{E\in\EMc }\left|\frac{\mathcal{N}_{\eta}(E)}{n\eta}-f_M(E)  \right| > 
 \frac{(\log n)^{C\log\log n}}{  (\eta n \kappa)^{1/4} }  \right)\leq n^{-c\log\log n}.
\end{equation}
 \end{theorem}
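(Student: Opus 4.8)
The plan is to follow the now-standard strategy for local laws, adapted to the MANOVA structure. First I would set up the linearization: write $B = (XX^* + YY^*)^{-1/2}YY^*(XX^* + YY^*)^{-1/2}$ and note that its eigenvalues coincide (up to trivial zeros) with those of $YY^*(XX^*+YY^*)^{-1}$, which in turn are related via a resolvent identity to the $2n\times(a+b)n$ block data. The key self-consistent equation~\eqref{implicitequation} for $m_{n,M}$ should emerge from a Schur-complement analysis: removing one column of $X$ or of $Y$ at a time, one expresses the diagonal entries of the relevant resolvent in terms of $m_{n,M}(z)$ plus a fluctuation term. I would prove a large-deviation bound on these fluctuation terms using the $\gamma$-subexponential hypothesis~\eqref{subexp} and concentration inequalities for quadratic forms $\mathbf{v}^* A \mathbf{v}$ (of Hanson--Wright type, degraded by the subexponential tails to give the $(\log n)^{C\log\log n}$ factors that appear throughout the statement).

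Second, I would establish the \emph{weak local law} — control on scales $\eta \gg n^{-1}$ but with a weaker error bound — by a continuity/bootstrap argument in $\eta$. Starting from large $\eta$ (say $\eta \sim 1$), where the fluctuations are manifestly small and the stability of the self-consistent equation is trivial, one decreases $\eta$ in small steps, at each step using the self-consistent equation together with the fluctuation bounds and the a priori bound from the previous scale. This yields that $m_{n,M}(z)$ approximately satisfies the self-consistent equation with a controlled error, uniformly on $\EMe$.

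Third — and this is where I expect the main obstacle — I would need the \textbf{effective stability analysis of the self-consistent equation}, which the introduction flags as the principal novelty. The point is that ``$m_{n,M}$ approximately solves the equation'' only implies ``$m_{n,M} \approx m_M$'' if the equation is stable near the true solution, i.e.\ if its derivative (the relevant $2\times 2$ Jacobian, since the MANOVA equation naturally couples $m$ and an auxiliary $\underline m$) is invertible with a quantitative bound, and this bound must be uniform on $\EMe$, degenerating in a controlled way like $\kappa^{-1}$ as one approaches the spectral edge. I would carry this out by analyzing the explicit algebraic equation — computing the discriminant appearing in~\eqref{mN}, showing the relevant factor stays bounded away from zero by $c\sqrt{\kappa}$ in the bulk, and converting this into a Lipschitz-type stability estimate $|m_{n,M}-m_M| \lesssim \kappa^{-1/2}|{\rm error}|$. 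Feeding the fluctuation bound $\sim (\eta n)^{-1/2}$ into this gives the $(\eta n \kappa)^{-1/2}$ rate in~\eqref{mmain}.

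Finally, for part $ii)$ I would pass from the Stieltjes-transform bound~\eqref{mmain} to a counting-function bound by the usual Helffer--Sjöstrand / Stieltjes inversion argument: for $E\in\EMc$, the eigenvalue count $\mathcal{N}_\eta(E)$ is recovered by integrating $\mathrm{Im}\, m_{n,M}(E+i\eta')$ against a smooth cutoff, and the difference between the empirical count and $n\eta f_M(E)$ is controlled by $\sup|m_{n,M}-m_M|$ on a suitable contour. The loss of a square root — the rate $(\eta n \kappa)^{-1/4}$ rather than $(\eta n\kappa)^{-1/2}$ — is the standard price of this conversion (one optimizes over the imaginary part used in the integration), which is why part $ii)$ also requires the slightly larger lower bound $\eta \ge (n\kappa^2)^{-1}(\log n)^{3C\log\log n}$ on the scale.
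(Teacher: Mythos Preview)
Your outline captures the broad architecture (self-consistent equation, quadratic-form large deviations, bootstrap in $\eta$, stability, then Stieltjes inversion for part $ii)$), but the concrete route the paper takes differs in two structural ways that you should be aware of.

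First, the paper does \emph{not} run the Schur-complement argument directly on the MANOVA matrix. As noted in Section~\ref{generalapproach}, the inner factor $YY^*$ and the outer factor $(XX^*+YY^*)^{-1/2}$ are not independent, so removing a column of $Y$ perturbs both in a coupled way and the usual rank-one bookkeeping breaks down. Instead the paper passes to the \emph{product matrix} $(YY^*)^{-1/2}XX^*(YY^*)^{-1/2}$ via the eigenvalue map $\lambda=(1+\mu)^{-1}$, then \emph{conditions on $Y$}, treating $T=(\tfrac1n YY^*)^{-1}$ as a deterministic population covariance. Only columns of $X$ are removed. The randomness of $Y$ is handled separately by invoking the local Marchenko--Pastur law from \cite{PY13} to control the spectrum of $T$ (the events $\To,\Tt$ in Section~\ref{subevents}). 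This decoupling is what makes the Silverstein--Bai machinery applicable; your proposal to remove columns of both $X$ and $Y$ simultaneously would need a different mechanism to handle the dependence.

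Second, the self-consistent equation the paper analyzes is the \emph{integral} equation~\eqref{implicitequation}, with $\finv$ the limiting density of $T$, not the closed-form algebraic equation one could in principle extract from~\eqref{mN}. Consequently the stability analysis (Lemmas~\ref{Kzero} and~\ref{stability}) is not a discriminant computation on a quadratic; it is an argument that the integral quantity $K_0(z)=\tfrac1a\int A(\lambda)^2\finv(\lambda)\,d\lambda$ stays bounded away from $1$ by $c\sqrt{\kappa+\eta}$, proved via the ``optical-theorem'' identity~\eqref{optical} and a phase argument on $A(\lambda)$. This is precisely the novelty flagged in the introduction. Your discriminant approach for the explicit $m_M$ might also work, but it is not what is done here, and it would still require the conditioning step above to connect the empirical $m_{n,M}$ to any such equation. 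Once Theorem~\ref{thmprod} for the product matrix is in hand, Theorem~\ref{thmmanova} follows in a few lines from the relation $m_{n,M}(z)-m_M(z)=-z^{-2}\bigl(m_{n,p}(z')-m_p(z')\bigr)$ with $z'=z^{-1}-1$.
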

We note that the entries of the matrices $X$ and $Y$ are not necessarily identically distributed.

\medskip

Theorem~\ref{thmmanova} shows that the  Stieltjes transform of the general MANOVA matrices 
is close to $m_M(z)$ 
when  the real part of $z$, $E=\Re z$,  is away from the spectral edge and the imaginary part $\eta=\Im z$ is small. 
In particular, $\eta$ may be as small as the 
shortest possible scale $1/n$, up to logarithmic corrections.
The second part of Theorem~\ref{thmmanova} is an easy consequence
of the first one and it asserts that the local density
on scale $1/n$ (modulo logarithmic corrections) is
given by the Manova density $f_M(E)$. 
While our analysis is valid down to the optimal scale  $\eta\gtrsim 1/n$,  the error bound
of the form $(n\eta\kappa)^{-1/2}$ is not optimal. The best estimate
should scale like  $(n\eta)^{-1}$ and should not blow up near the edge,
similarly to the best estimate in the Wigner case~\cite{EYY11a} and in the sample
covariance case~\cite{PY13}. 
Work to obtain the optimal error bounds is in preparation. 

\subsection{General Approach}\label{generalapproach}
The inside and outer matrices of the MANOVA matrix~\eqref{defmatrix} are not independent, which is a requirement for our approach. 
However, the eigenvalues of 
\eqref{defmatrix} are the same as those of  
\begin{equation*}
(Y \Ys)^{\oh}(X\Xs+Y\Ys)^{-1}(Y\Ys)^{\oh}=(I+(Y\Ys)^{-\oh}X\Xs(Y\Ys)^{-\oh})^{-1}.
\end{equation*} 
Thus, we begin our work with the matrix $(Y\Ys)^{-\oh}X\Xs(Y\Ys)^{-\oh}$,  which we will call
the {\it product matrix} and 
for which we can  use the approach developed in~\cite{ESY09a,ESY09,ESYY10,PY13} and related papers. 
After determining the behavior of the product matrix we will return to the matrix~\eqref{defmatrix}. 
Our approach determines an implicit equation 
for the Stieltjes transform of the limiting empirical eigenvalue distribution of the product matrix and shows that the solution is stable. 
The implicit equation we obtain is analogous to the quadratic equations that the Stieltjes transforms of the Wigner semicircle law and 
the MP law satisfy. 
Then it is shown that with high probability the Stieltjes transform of the empirical distribution nearly satisfies 
this implicit equation. 
{F}rom stability we conclude how close the empirical distribution is to its limit.

After obtaining results for the matrices  $(Y\Ys)^{-\oh}X\Xs(Y\Ys)^{-\oh}$, 
we return the matrices of our original interest. 
Note that if $\mu$ is an eigenvalue of $(Y\Ys)^{-\oh}X\Xs(Y\Ys)^{-\oh}$, 
then $\lambda=(1+\mu)^{-1}$ is an eigenvalue of~\eqref{defmatrix}. 
The eigenvalues of $(Y\Ys)^{-\oh}X\Xs(Y\Ys)^{-\oh}$ are non-negative  
so that the map just given is regular. 
This allows us to show that if the eigenvalues of $(Y\Ys)^{-\oh}X\Xs(Y\Ys)^{-\oh}$ are
 distributed close to their limiting distribution, 
then the eigenvalues of~\eqref{defmatrix} are also close to their limiting distribution.

\subsection{Conventions}
We make the following conventions, which will be used without referring to them. 
The letters $C$ and $c$ will denote positive absolute constants that may change from appearance to appearance. 
We use the complex number $z=E+i\eta$ for the spectral parameter,  where $\eta$ will always be positive. 
The edges of the limiting spectrum $\lm$ and $\lp$ are given in~\eqref{lpm} 
and we set 
\begin{equation}
\mu_-=\frac{1}{\lp}-1 \;\;\;\textnormal{and}\;\;\;\mu_+=\frac{1}{\lm}-1,\label{defmu}
\end{equation} 
to be limiting spectral edges of the product matrix.

Rather than writing $[a n]$ and $[b n]$ we will write $a n$ and $b n$, and it is implicit that we are 
using the integer part. Most quantities in this paper depend on the
parameters $a$ and $b$, but we usually omit this fact in the notation. 

\section{A Product Matrix}\label{productmatrix}

We begin by addressing the eigenvalues of the  product  matrix $(Y\Ys)^{-\oh}X\Xs(Y\Ys)^{-\oh}$. 
Denote by  $f_p$ 
the limiting empirical spectral distribution for matrices of the form $(Y\Ys)^{-\oh}X\Xs(Y\Ys)^{-\oh}$ 
with Gaussian entries.  
This distribution, which will be derived below, is  explicitly given by 
\begin{eqnarray}
f_{p,a,b}(x)=\fp(x)&=& \frac{C_{a,b}}{2\pi x }\sqrt{\left(\oox-\lm\right)\left(\lp-\oox\right)}
\cdot  I_{[\lm,\lp]}\left(\oox\right), 
\end{eqnarray} 
where $I_{[u,v]}$ is the characteristic function of the interval $[u,v]$. 
We denote this distribution's Stieltjes transform $m_p$, and we will use the subscript $p$ for functions
associated with the product matrix. 
The main theorem of this section, Theorem~\ref{thmprod}, 
 relates the eigenvalues of $(Y\Ys)^{-\oh}X\Xs(Y\Ys)^{-\oh}$ to the functions $f_p$ and $m_p$.
It is followed by 
a delocalization result, Theorem~\ref{deloc}. 
 Our main result, Theorem~\ref{thmmanova}, will also follow from Theorem~\ref{thmprod}.

The following definition formalizes the bulk for the product matrices; 
it is the analogous region to $\EMe$.
\begin{definition} 
For  $\lambda_{\pm}$ and $\mu_{\pm}$ as defined in~\eqref{lpm} and~\eqref{defmu} 
and for $\kappa>0$,  define
\begin{equation*}
\Eek:=\left\{E+i\eta \in\C^+:\;E\in (\mlow,\mhigh)\;\textnormal{ and }
\;\left(\lp-\frac{1}{E+1}\right)\left(\frac{1}{E+1}-\lm\right)\geq \kappa\right\}
\end{equation*}
and set  $\Eck=\mathcal{E}_{\kappa,0}$. 
\end{definition}

The following is the main result on product matrices. 
\begin{theorem}\label{thmprod}
Let $X$ be an $n\times b n$, $b>1$, random matrix with independent entries satisfying~\eqref{subexp} for a uniform $\gamma$. 
Let $Y$ be an $n\times a n$, $a>1$, random matrix independent of $X$ with independent entries 
also satisfying~\eqref{subexp} for the same $\gamma$. 
Let $m_{n,p}(z)$ be the Stieltjes transform of $\YYsh XX^*\YYsh$.  Fix $\kappa>0$ to be a small
positive constant.

\noindent $i)$ 
Then for $\eta> \frac{1}{n\kappa^2}(\log n)^{2C\log\log n}$
\begin{equation}
 \P\left(\sup_{z\in\Eek} |m_{n,p}(z)-\mP(z)|> \frac{(\log n)^{C\log\log n}}{\sqrt{\eta n\kappa}}\right)<n^{-c\log\log n}\label{thmclaimone}
\end{equation}
for all $n$ large enough and for constants $C,c>0$ depending only on $\gamma$. 

\noindent $ii)$ 
Let $\mathcal{N}_{\eta}(E)$ denote the number of 
eigenvalues of $\YYsh\XXs\YYsh$ in $[E-\frac{\eta}{2},E+\frac{\eta}{2}]$, 
and assume  $\eta \ge \frac{1}{n\kappa^2}(\log n)^{3C\log\log n}$.  
Then 
\begin{equation}\label{Nproduct}
\P\left( \sup_{E\in\Eck }\left|\frac{\mathcal{N}_{\eta}(E)}{n\eta}-\fp(E)  \right|\geq 
 \frac{(\log n)^{C\log\log n}}{ (\eta n \kappa)^{1/4} }   \right)\leq n^{-c\log\log n}.
\end{equation}
 \end{theorem}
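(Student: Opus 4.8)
The plan is to prove Theorem~\ref{thmprod} following the now-standard strategy for local laws, adapted to the product matrix $\YYsh\XXs\YYsh$ and its self-consistent equation~\eqref{implicitequation}. The central object is the resolvent $G(z) = (\YYsh\XXs\YYsh - zI)^{-1}$ and its normalized trace $m_{n,p}(z) = \on\tr G(z)$. First I would derive the self-consistent (implicit) equation satisfied approximately by $m_{n,p}$. Conditioning on $Y$ and treating $\YYsh X$ as a sample-covariance-type object with a deterministic population covariance $\YYsi$ (rescaled), one obtains a Marchenko--Pastur--type relation that, after integrating out the spectrum of $\YYs$ (whose empirical distribution is itself governed by the MP law for $\on YY^*$), closes into a single scalar equation for $m_p$. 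Concretely, the limiting $m_p$ should satisfy a fixed-point equation of the form $m_p = \Phi(m_p,z)$ where $\Phi$ is built from the Stieltjes transform of the MP law of $\YYs/n$; solving it reproduces the density $\fp$ above with edges $\mlow,\mhigh$ from~\eqref{defmu}. This computation is essentially algebraic and belongs in an appendix.

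Next I would establish the \emph{stability} of this self-consistent equation in the bulk region $\Eek$: if $\widetilde m$ satisfies $\widetilde m = \Phi(\widetilde m,z) + r$ with a small error $r$, then $|\widetilde m - m_p| \le C(\kappa)|r|$ as long as $|r|$ is below a threshold, uniformly for $z\in\Eek$. This is where the paper advertises its "main novelty"—an effective stability analysis—so I expect the argument to hinge on a careful lower bound for the derivative $1 - \partial_m\Phi$ away from the edges, expressed through the square-root behavior of $\fp$ near $\mlow,\mhigh$; the factor $\kappa$ in the error bounds of~\eqref{thmclaimone}--\eqref{Nproduct} is exactly the price of this stability constant degenerating as $z$ approaches the edge. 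The self-improving (bootstrap) scheme from~\cite{ESY10,ESYY10,PY13} would then be run: one controls $|m_{n,p} - m_p|$ first at large $\eta$ (of order one), where concentration is easy, and propagates the bound down to $\eta \gtrsim 1/(n\kappa^2)$ in dyadic steps, at each scale using the stability estimate together with a fresh large-deviation input.

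The large-deviation / fluctuation input is the third ingredient: using the Schur complement formula for the diagonal entries $G_{ii}$ of the resolvent of $\YYsh\XXs\YYsh$, one writes each $G_{ii}$ in terms of a quadratic form in the independent columns of $X$ with a random matrix coefficient, and applies standard concentration inequalities for quadratic forms of $\gamma$-subexponential variables (Hanson--Wright type, with the $(\log n)^{C\log\log n}$ factors arising from the subexponential tails rather than moment assumptions). This yields $|m_{n,p}(z) - \Phi(m_{n,p}(z),z)| \lesssim (\log n)^{C\log\log n}/\sqrt{n\eta}$ with the stated probability, which combined with stability gives part $i)$. For part $ii)$, the count $\mathcal{N}_\eta(E)$ is extracted from $\Im m_{n,p}$ via a Helffer--Sjöstrand / exponential-moment argument: integrating the local-law bound from part $i)$ against a mollified indicator, the $\eta$-smoothed density $\frac{\mathcal{N}_\eta(E)}{n\eta}$ is shown to be within the claimed $(\eta n\kappa)^{-1/4}$ of $\fp(E)$, the worse exponent $1/4$ versus $1/2$ being the usual loss when passing from Stieltjes transform control to counting functions without the optimal rigidity machinery.

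The main obstacle I anticipate is the stability analysis of the self-consistent equation: unlike the Wigner or MP cases, here the equation for $m_p$ is not a fixed quadratic but is itself defined implicitly through an integral against the MP law of $\YYs/n$, so $\partial_m\Phi$ is a nontrivial function and one must show $|1 - \partial_m\Phi(m_p(z),z)| \gtrsim \sqrt{\kappa}$ (or a comparable power) throughout $\Eek$. A secondary difficulty is the transfer back to the MANOVA matrix~\eqref{matthm}: although the spectral map $\mu\mapsto\lambda=(1+\mu)^{-1}$ is smooth and monotone on $[\mlow,\mhigh]\subset(0,\infty)$, one must check that the region $\Eek$ maps onto $\EMe$ with comparable $\kappa$ and that the Stieltjes transforms transform correctly under this change of variables, picking up the Jacobian factors that turn $m_p$ into $m_M$ as in~\eqref{mN}; this is routine but must be done carefully to preserve the error bounds and the threshold on $\eta$.
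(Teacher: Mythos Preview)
Your overall strategy matches the paper's closely: derive an approximate self-consistent equation for $m_{n,p}$, prove stability of the limiting equation in the bulk (indeed the key lemma, yielding $|1-K_0(z)|\gtrsim\sqrt{\kappa+\eta}$), feed in large-deviation bounds on the error, and bootstrap from $\eta=1$ downward in small steps. Part~$ii)$ is then read off from part~$i)$; the paper invokes the argument of Corollary~4.2 in~\cite{ESY09} rather than Helffer--Sj\"ostrand, but either route works.

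One technical point deserves care. You propose obtaining the fluctuation input from the diagonal entries $G_{ii}$ of the resolvent of $\YYsh\XXs\YYsh$ via Schur complement. But with $T=(\on YY^*)^{-1}$, the \emph{rows} of $T^{1/2}X$ are not independent---$T^{1/2}$ mixes them---so the usual minor expansion for $G_{ii}$ does not isolate a quadratic form in independent variables. The paper instead uses the Silverstein--Bai \emph{column}-removal decomposition: writing $B_n=\sum_{j=1}^N r_jr_j^*$ with $r_j=N^{-1/2}T^{1/2}x_j$ and $x_j$ the independent columns of $X$, one arrives at the identity~\eqref{eqdimn} whose error terms $d_j$ are genuine quadratic forms in the entries of $x_j$, to which the subexponential large-deviation lemma applies cleanly. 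Your phrase ``quadratic form in the independent columns of $X$'' suggests you already have this in mind; just be aware it is not literally the $G_{ii}$ expansion of the $n\times n$ resolvent.

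A second, smaller point: replacing the empirical spectrum of $T$ by the limiting density $f_{\textnormal{Inv}}$ inside the self-consistent equation requires \emph{rigidity} of the eigenvalues of $\on YY^*$, not merely weak convergence. The paper imports this from~\cite{PY13} and packages it as the high-probability events $\To,\Tt$; your sketch (``integrating out the spectrum of $YY^*$'') should make this dependence explicit.
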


\begin{theorem}\label{deloc} 
Set $A:=\YYsh XX^*\YYsh$ and assume that $X, Y$ satisfy the same
conditions as in Theorem~\ref{thmprod}. 
Then 
\begin{equation*}
\P\left(\exists \; v\in \C^{n},\;\|v\|_{2}=1,\;Av=\mu v,\;\mu\in\Eck,\;\textnormal{ and }\;\|v\|_{\infty}>\frac{ (\log n)^{C\log\log n}}{\sqrt{ n}}   \right)\leq  n^{-c\log\log n}.
\end{equation*}
\end{theorem}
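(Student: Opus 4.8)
The plan is to follow the standard passage from a local law to eigenvector delocalization, with the diagonal entries of the resolvent of $A:=\YYsh XX^*\YYsh$ as the intermediary. Let $\mu_1,\dots,\mu_n$ be the eigenvalues of $A$, let $u_1,\dots,u_n$ be a corresponding orthonormal eigenbasis, and set $G(z)=(A-zI)^{-1}$. Elementary spectral calculus gives, for $z=E+i\eta$,
$$\Im\,(G(z))_{ii}=\sum_{\alpha=1}^{n}\frac{\eta\,|u_\alpha(i)|^{2}}{(\mu_\alpha-E)^{2}+\eta^{2}}.$$
If $v$ is a unit eigenvector of $A$ with eigenvalue $\mu$, then retaining only the term $\alpha$ attached to $v$ and setting $E=\mu$ yields $\Im\,(G(\mu+i\eta))_{ii}\ge |v_i|^{2}/\eta$, i.e.
$$|v_i|^{2}\ \le\ \eta\;\Im\,\bigl(G(\mu+i\eta)\bigr)_{ii},\qquad 1\le i\le n.$$
Thus the whole statement reduces to a uniform upper bound on the \emph{individual} diagonal entries of $G$ over the bulk domain $\Eek$ down to the smallest imaginary part allowed in Theorem~\ref{thmprod}.

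The key input I would establish is the entrywise (isotropic) counterpart of Theorem~\ref{thmprod}: writing $\eta_0:=\tfrac{1}{n\kappa^{2}}(\log n)^{2C\log\log n}$, with probability at least $1-n^{-c\log\log n}$ one has
$$\sup\bigl\{\,|(G(z))_{ii}|\ :\ z\in\Eek,\ \Im z\ge\eta_0,\ 1\le i\le n\,\bigr\}\ \le\ (\log n)^{C\log\log n}.$$
This does not follow from the averaged bound \eqref{thmclaimone} alone, but it is a by-product of the same analysis. Conditioning on $Y$, the matrix $A=T^{1/2}XX^{*}T^{1/2}$ with $T:=(YY^{*})^{-1}$ is a sample covariance matrix whose (conditionally deterministic) population covariance $T$ is, on an event of probability $1-n^{-c\log\log n}$, spectrally bounded above and below — this is where the hypothesis $a>1$ is used, to keep $\lambda_{\min}(\tfrac1n YY^{*})$ bounded away from $0$. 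On that event the self-consistent (implicit) equation governing the resolvent of $A$ — the mechanism behind Theorem~\ref{thmprod} — controls not only $m_{n,p}=\tfrac1n\tr G$ but each $G_{ii}(z)$ separately; in the bulk each such entry lies within the fluctuation bound of \eqref{thmclaimone} of a deterministic quantity of size $O(1)$, namely the $(i,i)$ entry of the deterministic matrix approximating $G$, whose operator norm is $O(1)$ precisely because $T$ is spectrally bounded. Unconditioning on $Y$ gives the displayed estimate.

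Finally I would combine the two ingredients. On the event of the entrywise bound, let $v$ be any unit eigenvector of $A$ with eigenvalue $\mu\in\Eck$. Since $\mu$ lies strictly inside the bulk and $\kappa$ is a fixed constant, $\mu+2i\eta_0\in\Eek$ with $\Im(\mu+2i\eta_0)\ge\eta_0$, so the entrywise bound applies at $z=\mu+2i\eta_0$ and, combined with the first display,
$$|v_i|^{2}\ \le\ 2\eta_0\,(\log n)^{C\log\log n}\ =\ \tfrac{2}{n\kappa^{2}}(\log n)^{3C\log\log n},$$
whence $\|v\|_\infty=\max_i|v_i|\le (\log n)^{C'\log\log n}/\sqrt n$ after absorbing the fixed factor $1/\kappa$ and relabelling the constant. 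The crucial point is that this bound is deterministic on the entrywise-bound event and holds simultaneously for all (at most $n$) such eigenvectors and all $n$ coordinates, so no extra union bound is incurred and the exceptional probability is exactly the $n^{-c\log\log n}$ of the entrywise bound. The main obstacle is the middle step: upgrading the trace-level control of Theorem~\ref{thmprod} to a uniform bound on the individual resolvent entries $G_{ii}(z)$ all the way down to $\eta\sim(\log n)^{C\log\log n}/n$. Concretely this means running the stability analysis of the implicit equation at the level of the entrywise self-consistent equations — with the accompanying large-deviation bounds for the quadratic forms $\tfrac1n x_j^{*}T^{1/2}(\cdots)^{-1}T^{1/2}x_j$ built from the columns $x_j$ of $X$ — rather than only for their average; everything else is the routine spectral bookkeeping above.
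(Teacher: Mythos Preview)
Your route---bound $|v_i|^{2}\le\eta\,\Im G_{ii}(\mu+i\eta)$ and then control each diagonal resolvent entry---is the now-standard path to delocalization, but it is \emph{not} what the paper does, and the ``key input'' you invoke is not in fact a by-product of the machinery behind Theorem~\ref{thmprod}. The Silverstein--Bai derivation in Section~\ref{SB} is intrinsically averaged: one takes the normalized trace of~\eqref{fortrace} to arrive at the scalar equation~\eqref{eqdimn} for $m_n=\tfrac1n\tr G$, and the stability result (Lemma~\ref{stability}) is proved only at that scalar level. Upgrading to entrywise control of $G_{ii}$ for $A=\tfrac1nT^{1/2}XX^{*}T^{1/2}$ runs into a structural obstacle: the natural minor operation here---deleting a column of $X$---produces a rank-one perturbation of $A$, not the deletion of the $i$-th row and column that the Schur complement for $G_{ii}$ requires. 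One does get a clean Schur formula for the diagonal of the companion resolvent $(\tfrac1n X^{*}TX-zI)^{-1}$ (this is exactly what the paper uses in Lemma~\ref{bins}), but that governs the eigenvectors of $X^{*}TX$, not of $A$. So your middle step is genuine additional work of the anisotropic-local-law type, not something already contained in the paper's arguments; calling it a ``by-product of the same analysis'' is where the proposal overreaches.

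The paper bypasses entrywise resolvent bounds altogether. Following~\cite{ESY09a} it writes an explicit formula for an eigenvector component as the reciprocal of $1$ plus a quadratic form in the removed column $x_1$ against the squared resolvent of the minor, restricts the resulting spectral sum to those eigenvalues $\mu_k^{(1)}$ lying in a window of width $\eta$ around $\mu$, and then invokes two ingredients already in hand: Lemma~\ref{bins} to guarantee $\gtrsim n\eta$ eigenvalues in the window, and the large-deviation Lemma~\ref{like4.7ofWegner} (applied with $S=T^{1/2}$, which is where the spectral bounds on $T$ from $\To$ enter) to show $\sum_{k:\mu_k^{(1)}\in I_a}|u_k^{*}x_1|^{2}\gtrsim n\eta$ with probability $1-n^{-c\log\log n}$. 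A union bound over $\lesssim n$ windows tiling $\Eck$ and over the coordinate indices finishes. Your approach would yield the same conclusion once the entrywise local law is actually established, and is cleaner once that machinery exists; the paper's route is more elementary in that it needs only the a~priori eigenvalue-counting bound and a single quadratic-form large deviation, at the cost of an argument tailored to this specific model.
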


To prove Theorem~\ref{thmprod} we first determine the
implicit equation for $\mP$. We recall  that the corresponding
implicit equation for both the Wigner semicircle law
and the Marchenko-Pastur law is a simple algebraic equation.
In the case of  the  product matrix,  
 it turns out that $m_p$ satisfies an implicit integral equation  with good stability properties. 
In Section~\ref{SB} we show that  the Stieltjes transform of the empirical density,  $m_{n,p}$, 
approximately satisfies the implicit
equation for $m_p$ and we identify the error term. The error will be controlled in  
 Section~\ref{dj} after several preliminary lemmas in Section~\ref{subevents}.

\subsection{Stieltjes Transform of the Product Matrix}\label{SB}

The first step of the proof is determining an implicit equation for the Stieltjes transform of the  product matrix. 
While determining this equation, we initially view $(Y\Ys)^{-1}$ as a fixed matrix and, adjusting the scaling, 
set $T:=(\on Y\Ys)^{-1}$.

Random covariance matrices of the form $\frac{1}{a n}\YYs$ have limiting distribution
\begin{equation}
f_{MP,a}(x)= f_{MP}(x)=\frac{a}{2\pi}\sqrt{ \frac{\left[\left( (1+a^{-\oh})^2-x\right)\left(x-(1-a^{-\oh})^2 \right)\right]_+ }{x^2}},\label{fMP}
\end{equation}
which is called the Marchenko-Pastur distribution~\cite{MP67}.   
The local Marchenko-Pastur law was obtained in~\cite{ESYY10}, 
and the optimal result for the edge was obtained in~\cite{PY13}. 
The random matrix $T$  then has  limiting distribution
\begin{eqnarray}
f_{\textnormal{Inv},a}(x)=\finv(x)&=& \frac{1}{a x^2}f_{MP}\left(\frac{1}{a x}\right)\nonumber\\
&=&\frac{1}{2\pi x }\sqrt{ \left[\left( (1+a^{-\oh})^2-\frac{1}{a x}\right)\left(\frac{1}{a x}-(1-a^{-\oh})^2 \right)\right]_+ }.\label{finv}
\end{eqnarray}
Note that $\finv$ is supported on $[\am,\aM]$, where $a_{\pm}=(a(1\mp \frac{1}{\sqrt{a}})^2)^{-1}$. 

Here we also address the distribution for $f_p$. 
When the entries of $X$ and $Y$ are Gaussian, the limiting eigenvalue distribution
 of $\YYsh\XXs\YYsh$ can be obtained from the 
MANOVA distribution $f_M$ using the transformation described in Section~\ref{generalapproach}.  
It is
\begin{eqnarray}
f_{p,a,b}(x)=\fp(x)&=&  \frac{C_{a,b}}{(1+x)^2}\frac{\sqrt{\left(\oox-\lm\right)\left(\lp-\oox\right)}}{2\pi \oox\left(1-\oox\right)}\cdot I_{[\lm,\lp]}\left(\oox\right)\nonumber\\
&=&\frac{C_{a,b}}{2\pi x }\sqrt{\left(\oox-\lm\right)\left(\lp-\oox\right)}\cdot I_{[\lm,\lp]}\left(\oox\right).\label{densityp}
\end{eqnarray} 
We use $m_p(z)$ to denote the Stieltjes transform of $f_p$. 
We remark that no explicit formula is available for $\mP$, unlike the case of the semicircle or the 
Marchenko-Pastur law  but it satisfies an integral equation. 
By Lemma 5.1 of~\cite{SB95} the function $\mP$ is the unique solution taking values in $\C^+$
to the implicit equation
\begin{equation}
 m(z)=\int  \frac{1}{\lambda(1-\frac{1}{a}-\frac{1}{a}zm(z))-z}\finv(\lambda)d\lambda.\label{implicitequation}
\end{equation}

For the remainder of Section~\ref{productmatrix} we will shorten the notation $m_{n,p}$ to $m_n$.
Thus, $m_n(z)$ denotes the Stieltjes transform of the product matrix 
$(\YYs)^{-\oh}XX^{*}(\YYs)^{-\oh}=\on T^\oh XX^{*}T^{\oh}$. 
The goal is to derive a self consistent equation for $m_n(z)$ that is close to~\eqref{implicitequation}. 
We will then establish the stability of~\eqref{implicitequation} and conclude that 
$m_n(z)$ is close to $m_p(z)$. 

The derivation of the equation for $m_n(z)$ follows  Silverstein and Bai in~\cite{SB95,Sil95}. 
Here we set $N=b n$.   
Let $X$ be  an $n\times N$  matrix,  and  $T$ an $n\times n$ positive definite matrix. 
Then $\mn $ is the Stieltjes transform of $\frac{1}{n}T^{\oh}XX^*T^{\oh}$ 
and let $\mnu$  be the Stieltjes transform of $\frac{1}{n}X^*TX$. 
Since the nonzero spectrum of $AA^*$ and $A^*A$ coincide for any matrix $A$, we easily get that 
\begin{equation}
\mnu(z)=-\frac{1}{z}\Big( 1-\oa\Big)+\oa  \mn(z).\label{Silv1.2}
\end{equation}

We let $\xj$ denote the $j^{th}$ column of $X$. Set 
\begin{equation*}
\rj:=\frac{1}{\sqrt{N}}T^{1/2}\xj, \qquad
B_n:=\frac{1}{N}T^{1/2}XX^*T^{1/2}=\sum_{j=1}^N \rj \rjs, \qquad
\Bj : = \sum_{i=1,i\neq j}^N \rj \rjs.
\end{equation*}
We let $\mj(z)$ be the Stieltjes transform of $\Bj$ and 
set 
\begin{equation}\label{mnmj}
\mju(z)=-\frac{1}{z}\Big( 1-\oa\Big)+\oa \mj(z).
\end{equation}
We will use the following  simple formula: if $A$ is an $n\times n$ matrix, $q\in \C^n$ and both 
$A$ and $A+qq^*$ are invertible, then 
\begin{equation}
q^*(A+qq^*)^{-1}=\frac{1}{1+q^* A^{-1}q}q^*A^{-1}.\label{Silv2.1}
\end{equation}
Using the definition of $B_n$ we obtain the identity 
\begin{eqnarray*}
I+z(B_n-zI)^{-1}&=&\sum_{j=1}^N \rj \rjs (B_n-zI)^{-1}.
\end{eqnarray*}
Using~\eqref{Silv2.1}, for each $j$
\begin{equation}\label{ident1}
\rjs(B_n-zI)^{-1} = \rjs (\Bj-zI+ \rj\rjs)^{-1}
=\frac{1}{1+ \rjs (\Bj-zI)^{-1}\rj }\rjs (\Bj-zI)^{-1}, 
\end{equation}
so that 
\begin{eqnarray*}
I+z(B_n-zI)^{-1}= \sum_{j=1}^N \frac{1}{1+ \rjs (\Bj-zI)^{-1}\rj } \rj \rjs (\Bj-zI)^{-1}.
\end{eqnarray*}
We take the trace on each side and divide by $N$
\begin{eqnarray*}
\oa +z\oa \mn (z)&=&\frac{1}{N}\sum_{j=1}^N \frac{1}{1+ \rjs (\Bj-zI)^{-1}\rj }\tr[ \rj \rjs (\Bj-zI)^{-1}]\\
&=& \frac{1}{N}\sum_{j=1}^N \frac{\rjs (\Bj-zI)^{-1}\rj }{1+ \rjs (\Bj-zI)^{-1}\rj }\\
&=&1- \frac{1}{N}\sum_{j=1}^N \frac{1 }{1+ \rjs (\Bj-zI)^{-1}\rj },
\end{eqnarray*}
so that 
\begin{equation*}
-\frac{1}{z}\Big(1-\oa\Big)+\oa \mn(z)=-\frac{1}{z} \frac{1}{N}\sum_{j=1}^N \frac{1 }{1+ \rjs (\Bj-zI)^{-1}\rj }.
\end{equation*}
Using~\eqref{Silv1.2} we have 
\begin{equation}
\mnu(z)=-\frac{1}{z} \frac{1}{N}\sum_{j=1}^N \frac{1 }{1+ \rjs (\Bj-zI)^{-1}\rj }.\label{keyident2}
\end{equation}
With the resolvent identity and identities~\eqref{ident1} 
 and~\eqref{keyident2},
\begin{small}\begin{eqnarray}
\lefteqn{(-z\mnu(z)T -zI)^{-1}-(B_n-zI)^{-1}}\nonumber\\
&=&(-z\mnu(z)T -zI)^{-1}\left( \sum_{j=1}^N\rj\rjs-(-z\mnu(z)T)\right)(B_n-zI)^{-1}\nonumber\\
&=&(-z\mnu(z)T -zI)^{-1}\sum_{j=1}^N  \frac{1}{1+ \rjs (\Bj-zI)^{-1}\rj }\rj\rjs (\Bj-zI)^{-1}\nonumber\\
&&-(-z\mnu(z)T -zI)^{-1}\frac{1}{N}\sum_{j=1}^N \frac{1}{1+ \rjs (\Bj-zI)^{-1}\rj }   T(B_n-zI)^{-1}\nonumber\\
&=& \frac{1}{z}\sum_{j=1}^N \frac{1}{1+ \rjs (\Bj-zI)^{-1}\rj }\label{fortrace} \\
&&\times\Big[ (-\mnu(z)T -I)^{-1}\rj\rjs (\Bj-zI)^{-1}-\frac{1}{N} (-\mnu(z)T -I)^{-1}T(B_n-zI)^{-1}\Big].\nonumber
\end{eqnarray}\end{small}
Taking the trace of~\eqref{fortrace} and dividing by $n$ we have 
\begin{equation}
\frac{1}{n}\tr(-z\mnu(z)T -zI)^{-1})-\mn(z)
=\frac{a}{zN}\sum_{j=1}^N \frac{1}{1+ \rjs (\Bj-zI)^{-1}\rj }\dj(z),\label{eqdimn}
\end{equation}
where
\begin{small}\begin{eqnarray*}
\dj(z)
&=&\rjs(\Bj-zI)^{-1}(-\mnu(z)T-I)^{-1}\rj-\frac{1}{a n}\tr(\Bn-zI)^{-1}(-\mnu(z)T -I)^{-1}T.\nonumber
\end{eqnarray*}\end{small}
We break this into four terms
\begin{small}\begin{eqnarray}
|\dj(z)|&\leq&\big|\rjs(\Bj-zI)^{-1}(\mnu(z)T+I)^{-1}\rj-\rjs(\Bj-zI)^{-1}(\mju(z)T+I)^{-1}\rj\big|\label{termA}\\
&+&\; \big|\rjs(\Bj-zI)^{-1}(\mju(z)T+I)^{-1}\rj- \frac{1}{a n}\tr(\Bj-zI)^{-1}(\mju(z)T +I)^{-1}T\big|\label{termB}\\
&+&\;\Big|\frac{1}{a n}\tr(\Bj-zI)^{-1}(\mju(z)T +I)^{-1}T-\frac{1}{a n}\tr(\Bj-zI)^{-1}(\mnu(z)T +I)^{-1}T \Big|  \label{termC}\\
&+&\;\Big| \frac{1}{a n}\tr(\Bj-zI)^{-1}(\mnu(z)T +I)^{-1}T -\frac{1}{a n}\tr(\Bn-zI)^{-1}(\mnu(z)T +I)^{-1}T \Big|.\label{termD}
\end{eqnarray}\end{small}
We will bound $|d_j(z)|$ in Section~\ref{dj} using the Lemmas developed in Sections~\ref{subevents} and~\ref{inteq}.

\begin{subsection}{Large deviation estimates for exceptional events}\label{subevents}
In this section we will define certain typical events, 
denoted by $\X,\To,\Tt$ and $\B$, that will be needed to estimate $|d_j(z)|$. 
Recall that $\xj$ denotes the $j^{th}$ column of $X$ and let $\Xj$  denote the matrix 
obtained by removing the $j^{th}$ column of $X$. Define the event 
\begin{equation}
\X:= \Big\{ \oh\leq \frac{\|\xj\|^2_{2}}{ n}\leq 2\;\;\textnormal{and}\;\;
\left(1-\frac{1}{\sqrt{b}}\right)^2\leq\left\|\on\Xj\Xj^*\right\|<4b\; 
  \;\textnormal{ for all}\;\;1\leq j\leq  b n \Big\}.\label{eqX}
\end{equation}
For some small $c>0$, let $\To$ denote the event
\begin{equation}
\To: =\Big\{ T=\onYYsi\;\;\textnormal{ is well-defined and }\;\;   (1-c)\am   \leq T\leq (1+c)\aM \Big\}.\label{eqTone}
\end{equation}
Let $\{t_1,\ldots,t_n\}$ denote the eigenvalues of $T$.
Let $\{\tau_1,\ldots,\tau_n\}$ denote  their classical locations given 
through the limiting density $\finv$ from~\eqref{finv}, i.e. they are 
defined through the formula 
\begin{equation}
\int_{-\infty}^{\tau_k}\finv(x)dx=\frac{k}{n}, \qquad k=1,\ldots,n.\label{deftau}
\end{equation}
Let 
\begin{equation}
\Tt:=\Big\{ \on\sum_{k=1}^n |t_k-\tau_k|\leq \frac{1}{n}(\log n)^{C_T\log\log n}  \Big\},\label{eqTtwo}
\end{equation}
 denote the event that the actual eigenvalues are close to their classical location,
where $C_T>0$ is a  constant independent of $n$.

Let $\mu^{(j)}_1,\ldots,\mu^{(j)}_{n-1}$ denote the eigenvalues of $\Bj$ for $j=1,\ldots,  N$. 
For a fixed constant $K$, let  $\B$ be the event that 
\begin{equation}
\B:=\Big\{ \sup_{z\in {\cal E}^{(\mu)}}\max_{1\leq j\leq  N  } \on \sum_{k=1}^{n-1}\frac{1}{|\mu_k^{(j)}-z|}\leq K (\log n)^2\Big\}
  \cap \Big\{ \sup_{z\in {\cal E}^{(\mu)}}\max_{1\leq j\leq  N } \on \sum_{k=1}^{n-1}\frac{1}{|\mu_k^{(j)}-z|^2}\leq \frac{K}{\eta}
 \Big\},
\label{eqB}
\end{equation}
where 
\begin{small}\begin{equation*}
{\cal E}^{(\mu)}
:= \Big\{ z=E+i\eta\in \C\; : E\in(\mlow,\mhigh),\;
 \left(\lp -\frac{1}{1+E}\right)\left(\frac{1}{1+E}-\lm\right)\geq \kappa,\; \frac{(\log n)^{C}}{n}\leq \eta\leq 1
\Big\}.
\end{equation*}\end{small}

\begin{proposition}\label{events}
 With the notations above, the following estimate holds
\begin{equation}\label{fourset}
\P(\X\cap\To\cap\Tt\cap\B)\geq 1-n^{-c\log\log n}.
\end{equation}
\end{proposition}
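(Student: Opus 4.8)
The plan is to prove Proposition~\ref{events} by showing that each of the four events $\X$, $\To$, $\Tt$, $\B$ holds with probability at least $1 - n^{-c\log\log n}$ individually, and then taking a union bound; since there are only four of them, the union bound costs only a constant factor which is absorbed by adjusting $c$. The events $\X$ and $\To$ are essentially statements about the extreme singular values of the rectangular Gaussian-like matrices $X_{(j)}$ and $Y$; since the entries are uniformly $\gamma$-subexponential, I would invoke standard non-asymptotic bounds on operator norms of random matrices with independent subexponential entries (of the type used in~\cite{ESYY10,PY13}): the largest singular value of $\frac1n\Xj\Xj^*$ is below $4b$ and the smallest above $(1-1/\sqrt b)^2$ except on an event of probability $\le e^{-n^{c}}$, and similarly the eigenvalues of $T = \onYYsi$ lie in $[(1-c)a_-,(1+c)a_+]$ with the same kind of probability. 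The column-norm concentration $\oh \le \|\xj\|_2^2/n \le 2$ is a one-dimensional large-deviation bound for a sum of $n$ independent mean-one subexponential variables, again with failure probability $e^{-n^c}$; a union over the $bn$ columns is harmless. All these probabilities are far smaller than $n^{-c\log\log n}$.

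For $\Tt$, the statement that $\on\sum_k |t_k-\tau_k| \le \frac1n (\log n)^{C_T\log\log n}$ is precisely a \emph{rigidity} statement for the sample covariance matrix $\frac{1}{an}YY^*$ (after the regular change of variables $x \mapsto 1/(ax)$ that sends the Marchenko--Pastur eigenvalues to those of $T$): the eigenvalues of $\frac{1}{an}YY^*$ are within $n^{-1}(\log n)^{C\log\log n}$ of their classical locations on average. This is exactly the content of the local Marchenko--Pastur law of~\cite{ESYY10} together with its edge refinement~\cite{PY13}; I would cite those results and note that since $1/(ax)$ is a smooth bijection on the relevant interval (bounded away from $0$ and $\infty$ on $\To$), the average displacement of the classical locations transforms with a bounded Jacobian, so the bound transfers to the $t_k$ with only a constant change in the implicit constant. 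The failure probability there is already of the stated form $n^{-c\log\log n}$.

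The main obstacle is the event $\B$, which asks for a \emph{uniform} (in $z$ over a net of ${\cal E}^{(\mu)}$ and in $j$ over all $N = bn$ minors $\Bj$) control of $\on\sum_k \frac1{|\mu_k^{(j)}-z|}$ and $\on\sum_k \frac1{|\mu_k^{(j)}-z|^2}$. The second sum equals $\frac{1}{\eta}\,\Im m_{(j)}(z)$, and the first is bounded by $\log n$ times the second plus a crude bound away from the real axis, so both follow once one knows $\Im m_{(j)}(z) \le K(\log n)^2$ — equivalently, that no minor $\Bj$ has more than $O(\eta n (\log n)^2)$ eigenvalues in any interval of length $\eta$ near ${\cal E}^{(\mu)}$. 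Here there is an apparent circularity: $\B$ is one of the ingredients used to run the self-consistent-equation analysis that ultimately proves the local law, so one cannot simply quote Theorem~\ref{thmprod}. The resolution I would use is a bootstrap on the scale $\eta$: at the largest scale $\eta \sim 1$ the bound is trivial from $\To$ and $\X$ (the eigenvalues of $\Bj$ are contained in a fixed compact set, so $\on\sum_k |\mu_k^{(j)}-z|^{-1}$ and its square are $O(1)$ there), and then one decreases $\eta$ in multiplicative steps, at each step using the already-established self-consistent equation estimate on the coarser scale together with the monotonicity $\eta \mapsto \eta\,\Im m_{(j)}(E+i\eta)$ being increasing to propagate a bound of the form $\Im m_{(j)} \le K(\log n)^2$ down to the next scale; a standard dyadic-net argument over $z$ (continuity of $m_{(j)}$ with Lipschitz constant $\eta^{-2}$ lets one pass from a net of size $\mathrm{poly}(n)$ to all $z$) and a union bound over the $N$ values of $j$ handle the two suprema. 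The per-$j$, per-$z$ deviation estimates feeding this are the same large-deviation bounds for quadratic forms $\rjs (\Bj - zI)^{-1}\rj$ used elsewhere in the paper. I expect the bookkeeping in this bootstrap — making sure the constants $K$, $C_T$, and the number of scales interact correctly so that the accumulated error stays below $n^{-c\log\log n}$ — to be the technical heart of the proof, while the other three events are essentially citations.
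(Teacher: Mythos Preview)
Your treatment of $\X$, $\To$, and $\Tt$ matches the paper's: column-norm concentration plus operator-norm bounds from~\cite{PY13} give $\X$ and $\To$, and eigenvalue rigidity for $\frac{1}{an}YY^*$ from~\cite{PY13}, pushed through the map $x\mapsto 1/(ax)$ (the paper isolates this step as Lemma~\ref{ttau}), gives $\Tt$.

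For $\B$ you take a genuinely different route. You correctly diagnose an apparent circularity --- $\B$ feeds into the local-law machinery, so it cannot simply be read off from Theorem~\ref{thmprod} --- and propose to break it by interleaving a scale-by-scale bootstrap of $\B$ with the continuity argument for the local law. The paper instead sidesteps the circularity entirely: it proves an \emph{a priori} eigenvalue-counting bound (Lemma~\ref{bins}), namely $\P(\N_\eta \ge Kn\eta/\sqrt E)\le n^{-c\log\log n}$, using only the Schur-complement formula for a diagonal resolvent entry of $\on X^*TX$ together with a concentration lower bound on $\sum_{k:\mu_k\in I_\eta}|u_k^*T^{1/2}x_1|^2$ (Lemma~\ref{like4.7ofWegner}, a variant of Lemma~4.7 in~\cite{ESY10}). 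Once this counting bound is in hand at every scale, a dyadic decomposition in $|\mu_k-E|$ (Lemma~\ref{boundsB}, following Proposition~4.3 of~\cite{ESY10}) yields both sums in~\eqref{eqB} directly, with no input from the self-consistent equation. Your bootstrap could plausibly be completed, but it would fold much of Section~\ref{proofprod} into the proof of Proposition~\ref{events} and force you to track the $\B$-constants and the bound on $M(z)=\|(\mnu T+I)^{-1}\|$ simultaneously at each scale; the paper's approach keeps $\B$ logically prior to and independent of the local-law argument. (A small correction: the first sum in~\eqref{eqB} is not ``$\log n$ times the second'' --- Cauchy--Schwarz only gives $\sqrt{K/\eta}$ --- rather, both follow from the counting bound via dyadic annuli.)
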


We first prove several lemmas.

\begin{lemma}\label{ttau}
Let $\{\otau_1,\ldots,\otau_n\}$ be the classical locations w.r.t. the Marchenko-Pastur law, i.e.
\begin{equation}
\int_{-\infty}^{\otau_k}f_{MP}(x)dx=\frac{k}{n}, \qquad k=1,2,\ldots, n.\label{properplacement}
\end{equation}
Assume that the points   $\{\ot_1,\ldots,\ot_n\}$ satisfy 
\begin{equation}\label{sat}
|\ot_k-\otau_k|\leq \delta, \qquad \min(\ot_1,\ldots,\ot_n)\geq \alpha >0
\end{equation}
 for some $\delta, \alpha>0$. 
Set  $t_k:=(a \ot_{n+1-k})^{-1}$.
Then,  recalling the definition of $\tau_k$ from~\eqref{deftau}, we have
$$
|\tau_k-t_k|\leq \frac{\delta}{\alpha a \left(1-a^{-\oh}\right)^2}, \qquad  k=1,2,\ldots, n.
$$
\end{lemma}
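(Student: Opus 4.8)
The plan is to relate the classical locations $\tau_k$ of $\finv$ directly to the classical locations $\otau_k$ of $f_{MP}$ via the explicit change of variables $x\mapsto (ax)^{-1}$, and then turn a bound on $|\ot_k-\otau_k|$ into a bound on $|t_k-\tau_k|$ by controlling the Lipschitz constant of that change of variables on the relevant interval. First I would observe that the map $\phi(x)=(ax)^{-1}$ is an orientation-reversing bijection from the support $[(1-a^{-1/2})^2,(1+a^{-1/2})^2]$ of $f_{MP}$ onto the support $[\am,\aM]$ of $\finv$, and that $\finv$ is precisely the pushforward of $f_{MP}$ under $\phi$ (this is exactly the content of~\eqref{finv}, where $\finv(x)=\frac{1}{ax^2}f_{MP}(\frac{1}{ax})$ is the standard density-transformation formula). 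Consequently, integrating $\finv$ from $-\infty$ to $\phi(\otau_{n+1-k})$ and changing variables $y=\phi(x)$ gives $\int_{-\infty}^{\phi(\otau_{n+1-k})}\finv(y)\,dy=\int_{\otau_{n+1-k}}^{\infty}f_{MP}(x)\,dx=1-\frac{n+1-k}{n}+\frac{1}{n}=\frac{k}{n}$ (using~\eqref{properplacement}), so by the defining property~\eqref{deftau} of $\tau_k$ we conclude $\tau_k=\phi(\otau_{n+1-k})=(a\,\otau_{n+1-k})^{-1}$. Thus both $t_k$ and $\tau_k$ are images under $\phi$ of points close to each other: $t_k=\phi(\ot_{n+1-k})$ and $\tau_k=\phi(\otau_{n+1-k})$.

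Next I would estimate $|t_k-\tau_k|=|\phi(\ot_{n+1-k})-\phi(\otau_{n+1-k})|$ using the mean value theorem. Since $\phi'(x)=-\frac{1}{ax^2}$, we have $|\phi(u)-\phi(v)|\le \sup_{\xi}\frac{1}{a\xi^2}\,|u-v|$, where the supremum is over $\xi$ between $\ot_{n+1-k}$ and $\otau_{n+1-k}$. Both of these points lie in a region where $\xi$ is bounded below: $\otau_{n+1-k}\ge (1-a^{-1/2})^2$ because it lies in the support of $f_{MP}$, and $\ot_{n+1-k}\ge \alpha$ by hypothesis~\eqref{sat}; to get a single clean bound I would note that in fact $\otau_{n+1-k}\ge(1-a^{-1/2})^2$ already, and the point of the hypothesis $\min\ot_k\ge\alpha$ together with $|\ot_k-\otau_k|\le\delta$ is to ensure the segment between $\ot_{n+1-k}$ and $\otau_{n+1-k}$ stays bounded away from $0$; a valid lower bound for $\xi$ on this segment is $\min(\alpha,(1-a^{-1/2})^2)$, and under the intended application $\delta$ is tiny so one may as well write the bound in terms of both quantities. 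Taking the cleaner route that matches the statement, I would bound $\xi\ge \alpha$ when $\xi\le\otau_{n+1-k}$ is not guaranteed, but since $\otau_{n+1-k}\ge(1-a^{-1/2})^2$ one gets $\frac{1}{a\xi^2}\le\frac{1}{a\alpha^2}$ in the worst case; however the stated bound has $\alpha$ to the first power and $(1-a^{-1/2})^2$ appearing, so the correct reading is to use $\xi\ge\alpha$ for one factor and $\xi\ge(1-a^{-1/2})^2$ for the other, i.e. $\frac{1}{a\xi^2}=\frac{1}{a\xi\cdot\xi}\le\frac{1}{a\alpha(1-a^{-1/2})^2}$, which yields exactly $|t_k-\tau_k|\le\frac{\delta}{\alpha a(1-a^{-1/2})^2}$.

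The main obstacle, such as it is, is bookkeeping: verifying the index shift $k\leftrightarrow n+1-k$ in the quantile identity is consistent (because $\phi$ is decreasing, the $k$-th smallest point for $\finv$ corresponds to the $k$-th largest for $f_{MP}$, hence the $(n+1-k)$-th smallest), and making sure the interval over which $\phi'$ is controlled genuinely contains the segment joining $\ot_{n+1-k}$ and $\otau_{n+1-k}$ — which is where the two hypotheses in~\eqref{sat} are both used, one to keep $\ot$ bounded away from zero and $\delta$-closeness to keep the whole segment there. Everything else is a one-line mean-value-theorem estimate. I would present it in that order: (1) identify $\finv$ as the pushforward of $f_{MP}$ under $\phi$; (2) deduce $\tau_k=(a\,\otau_{n+1-k})^{-1}$ from the quantile definitions; (3) apply the mean value theorem to $\phi$ and bound $|\phi'|$ using the lower bounds on the arguments.
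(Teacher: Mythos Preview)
Your approach is essentially the same as the paper's: identify $\tau_k$ with $(a\,\otau_{n+1-k})^{-1}$ via the change of variables in~\eqref{finv}, then control $|t_k-\tau_k|$ through the map $\phi(x)=(ax)^{-1}$. There is, however, a small technical slip in your final step. The mean value theorem gives $|\phi(u)-\phi(v)|=\frac{1}{a\xi^2}|u-v|$ for a \emph{single} point $\xi$ between $u=\ot_{n+1-k}$ and $v=\otau_{n+1-k}$; from this you only know $\xi\ge\min(u,v)\ge\min\big(\alpha,(1-a^{-1/2})^2\big)$, so the step ``use $\xi\ge\alpha$ for one factor and $\xi\ge(1-a^{-1/2})^2$ for the other'' is not valid --- it would require $\xi$ to exceed both bounds, which MVT does not give. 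The paper avoids this by using the exact algebraic identity
\[
\Big|\frac{1}{a\,\otau_{n+1-k}}-\frac{1}{a\,\ot_{n+1-k}}\Big|
=\frac{|\otau_{n+1-k}-\ot_{n+1-k}|}{a\,\otau_{n+1-k}\,\ot_{n+1-k}},
\]
after which the two \emph{distinct} denominator factors can be bounded below separately by $(1-a^{-1/2})^2$ and $\alpha$. That one-line replacement fixes your argument and yields exactly the stated constant.
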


\begin{proof}
The smallest point contained in the support of $f_{MP}$ is $(1-a^{-\oh})^2$, 
so given~\eqref{properplacement} we have
\begin{equation*}
\int_{(a\otau_k)^{-1}}^{a^{-1}(1-a^{-\oh})^{-2}} \frac{1}{a x^2} f_{MP}\left(\frac{1}{a x}\right)dx=\frac{k}{n}.
\end{equation*}
Considering the relation between $\finv$ and  $f_{MP}$ (see~\eqref{finv}),  
 it follows that $\tau_k:=(a \otau_{n+1-k})^{-1}$. 
By the assumption \eqref{sat} we have 
\begin{equation*}
\Big|\frac{1}{a\otau_k}-\frac{1}{a\ot_k}\Big|\leq \frac{\delta}{\alpha a(1-a^{-\oh})^2} 
\end{equation*}
for all $k=1,2, \ldots n$. 
\end{proof}

The following lemma is a variation of Lemma 4.7 of~\cite{ESY10}. 
\begin{lemma}\label{like4.7ofWegner}
Let the entries of $b\in\C^{n}$ be uniformly $\gamma$-subexponential independent random variables satisfying $\E b_i=0$ 
and $\E|b_i|^2=1$ for $1\leq i\leq n$.  
Let $S$ be an $n\times n$ positive definite matrix satisfying $\sm\leq S\leq \sM$ for $0<\sm\le 1$ and $1\le \sM<\infty$. 
Let $\{v_i\}_{i=1}^m$ be a set of orthonormal vectors in $\C^n$ and set $\xi_i=|b^* Sv_i|^2$ for $i=1,\ldots,n$. 
If $\epsilon >0$ and $m$ satisfy  $ \oh(1-\epsilon)\sm^2\sqrt{m}\geq \sM^2(\log n)^{C} $
for a constant $C$ depending only on $\gamma$,  then 
\begin{equation*}
\P\left(\sum_{i=1}^m \xi_i\leq \epsilon \sm^2 m\right)\leq Cn^{-\log\log n}. 
\end{equation*}
\end{lemma}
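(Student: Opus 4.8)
The plan is to reduce the claim to a lower-tail large deviation estimate for a sum of weakly correlated nonnegative random variables and then apply a union/truncation argument against the subexponential tail. First I would fix a realization of the orthonormal set $\{v_i\}_{i=1}^m$ and note that the vectors $w_i := S v_i$ satisfy $\|w_i\|_2 \ge \sm$, since $S \ge \sm I$ and $\|v_i\|_2 = 1$. Thus each $\xi_i = |b^* w_i|^2$ has expectation $\E \xi_i = \|w_i\|_2^2 \ge \sm^2$ (using $\E b = 0$, $\E |b_j|^2 = 1$ and independence of the coordinates of $b$), so that $\E \sum_{i=1}^m \xi_i \ge \sm^2 m$. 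The event $\{\sum_{i=1}^m \xi_i \le \epsilon \sm^2 m\}$ is therefore a deviation of the sum below a $(1-\epsilon)$-fraction of its mean (or more), and the work is to show this is exponentially unlikely despite the $\xi_i$ not being independent.

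The key step is a second-moment / Chebyshev-type bound combined with a truncation of $b$. I would first truncate: by the subexponential condition \eqref{subexp}, with probability at least $1 - \gamma_2 n e^{-(\log n)^{1/\gamma_1}} \ge 1 - C n^{-\log\log n}$ one has $|b_j| \le (\log n)^{C'}$ for all $j$ (choosing $t = (2\log\log n \cdot \log n)^{\gamma_1}$ or similar in the tail bound, with an appropriate constant). Conditioning on this good event and denoting by $\widetilde b$ the truncated (and recentered/renormalized) vector, the moments $\E \widetilde b_j$, $\E |\widetilde b_j|^2$ are within $n^{-10}$, say, of $0$ and $1$. Then I would estimate $\mathrm{Var}\big(\sum_i \xi_i\big)$. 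Writing $\sum_i \xi_i = b^* M b$ with $M := \sum_i w_i w_i^* = S\big(\sum_i v_i v_i^*\big) S = S P S$ where $P$ is an orthogonal projection of rank $m$, one has $0 \le M \le \sM^2 I$, $\tr M = \sum_i \|w_i\|_2^2 \ge \sm^2 m$, and $\tr M^2 \le \sM^2 \tr M \le \sM^4 m$. A standard computation of the variance of a quadratic form in independent mean-zero variables with bounded fourth moments (here $\E |b_j|^4 \le (\log n)^{C}$ on the truncated event, or uniformly bounded by subexponentiality) gives $\mathrm{Var}(b^* M b) \le C (\log n)^{C} \big( \tr M^2 + (\sum_j M_{jj})^2 \cdot 0 + \ldots \big) \le C (\log n)^{C} \sM^4 m$. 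Chebyshev then yields
\begin{equation*}
\P\Big( \sum_i \xi_i \le (1-\epsilon) \E \sum_i \xi_i \Big) \le \frac{\mathrm{Var}(\sum_i \xi_i)}{\epsilon^2 (\E \sum_i \xi_i)^2} \le \frac{C (\log n)^{C} \sM^4 m}{\epsilon^2 \sm^4 m^2} = \frac{C (\log n)^{C} \sM^4}{\epsilon^2 \sm^4 m}.
\end{equation*}
The hypothesis $\oh (1-\epsilon) \sm^2 \sqrt m \ge \sM^2 (\log n)^{C}$ forces $\sM^4/(\epsilon^2 \sm^4 m) \le C (\log n)^{-2C}/(1-\epsilon)^2$, which is more than enough to make the right-hand side smaller than $n^{-\log\log n}$ once $C$ is taken large relative to the implicit constant; adding back the $Cn^{-\log\log n}$ probability of the truncation failing gives the stated bound.

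The main obstacle I anticipate is not the variance computation itself but bookkeeping the truncation cleanly: one must control the effect of replacing $b$ by its truncated version on $\E \xi_i$ — i.e. check that after truncation $\E \sum_i \xi_i$ is still $\ge (1 - o(1))\sm^2 m$, so that a deviation below $\epsilon \sm^2 m$ is still a genuine $(1-\epsilon + o(1))$-fraction deviation from the truncated mean. This requires that the correction to each $\E|\widetilde b_j|^2$ be summable against $\tr M \le \sM^2 m$ with a total error $o(\sm^2 m)$, which follows from the exponential tail (the truncated mass is super-polynomially small) together with the hypothesis tying $\sM$ and $\sm$; but it has to be stated carefully. A secondary point is that the lemma is quantified over all configurations of orthonormal $\{v_i\}$ simultaneously via the deterministic bounds on $M$, so no net/union bound over $\{v_i\}$ is needed — the estimate on $\tr M$, $\tr M^2$ is uniform — and I would make that explicit. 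Alternatively, if one prefers a bound with exponential (rather than polynomial-in-variance) decay one could instead invoke a Hanson–Wright-type inequality for subexponential variables directly on $b^* M b$; I would mention this as the cleaner route if such an inequality is available in the form needed, but the Chebyshev argument above already suffices for the target probability $n^{-\log\log n}$.
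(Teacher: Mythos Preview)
Your setup is essentially the same as the paper's: write $\sum_i \xi_i = b^* M b$ with $M = SPS$ where $P$ is the rank-$m$ projection onto $\mathrm{span}\{v_i\}$, so that $\tr M \ge \sm^2 m$ and $\tr M^2 \le \sM^4 m$. The paper likewise reduces to a centered quadratic form $Z = b^* M b - \tr M$ and then bounds $\P(|Z| > (1-\epsilon)\sm^2 m)$.

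The gap is in the final probabilistic step. Chebyshev on the quadratic form only gives
\[
\P\big(|Z| > (1-\epsilon)\sm^2 m\big) \;\le\; \frac{C(\log n)^{C}\,\tr M^2}{(1-\epsilon)^2 \sm^4 m^2}
\;\le\; \frac{C(\log n)^{C}\,\sM^4}{(1-\epsilon)^2 \sm^4 m}.
\]
Under the hypothesis $\tfrac12(1-\epsilon)\sm^2\sqrt m \ge \sM^2(\log n)^{C}$ this is at best of order $(\log n)^{-C}$, which is \emph{not} smaller than $n^{-\log\log n} = \exp(-(\log n)\log\log n)$ for any fixed $C$. So the sentence ``the Chebyshev argument above already suffices for the target probability $n^{-\log\log n}$'' is incorrect: second-moment bounds cannot reach a super-polynomially small probability when $m$ is only required to be polylogarithmic in $n$.

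What is actually needed is precisely the Hanson--Wright-type route you mention as an alternative. The paper invokes Lemma~B.2 of~\cite{EYY11}, which gives, for $\gamma$-subexponential entries, a bound of the form
\[
\P\Big(|Z| > (\log n)^{C}\big(\tr M^2\big)^{1/2}\Big) \le C n^{-\log\log n},
\]
after splitting $Z$ into its diagonal and off-diagonal parts. Since $(\tr M^2)^{1/2} \le \sM^2 \sqrt m$, the hypothesis $\tfrac12(1-\epsilon)\sm^2 m \ge (\log n)^{C}\sM^2\sqrt m$ is exactly what guarantees $(1-\epsilon)\sm^2 m$ exceeds this threshold. No separate truncation step is needed: the subexponential tails are handled inside that large-deviation lemma. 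Your variance and trace computations are correct and feed directly into this argument; only the concentration inequality needs to be upgraded.
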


\begin{proof}
We set $w_i=Sv_i$ for $1\leq i\leq m$. 
As in the proof of Lemma 4.7 in~\cite{ESY10}, we set 
\begin{equation*}
Z= \sum_{k,l=1}^n a_{k,l}[b_k\overline{b}_l-\E b_k\overline{b}_l]\;\;\textnormal{for}\;\;\;a_{k,l}=\sum_{i=1}^m\overline{w}_i(k)w_i(l).
\end{equation*}
We note that $\sum_{i=1}^m\xi_i=Z+\sum_{i=1}^m\|w_i\|^2$. 
Also,  $\sum_{k,l=1}^n|a_{k,l}|^2=\tr \big[\sum_{i=1}^m(Sv_i)(v_i^*S)\big]^2\le \sM^4m$, 
where in the last step we used that $M=\sum_{i=1}^m(Sv_i)(v_i S^*)$ is a matrix of rank at most $m$ and 
$M\le S^2\le \sM^2$. 
Our assumptions on $\epsilon$ and $m$ give $\oh(1-\epsilon)\sm^2 m\geq (\log n)^{C}\sM^2\sqrt{m}$. 
Therefore, using Lemma B.2 of~\cite{EYY11}, 
\begin{eqnarray*}
\P\Big(\sum_{i=1}^m\xi_i\leq \epsilon\sm^2 m\Big)
&\le &\P\Big(|Z|>\sum_{i=1}^m\|w_i\|^2-\epsilon\sm^2 m\Big)\\
&\leq &\P\Big(|Z|>(1-\epsilon)\sm^2 m\Big)\\
&\leq & \P\left(  \left|\sum_{k\neq l} a_{k,l}[b_k\overline{b}_l-\E b_k\overline{b}_l ]\right|>   (\log n)^{C}\sM^2\sqrt{m}  \right)\\
&&+\P\left(\left| \sum_{k=1}^n a_{k,k}[b_k\overline{b}_k-\E b_k\overline{b}_k] \right|>   (\log n)^{C}\sM^2\sqrt{m}  \right)\\
&\leq &  \P\left(  \left|\sum_{k\neq l} a_{k,l}[b_k\overline{b}_l-\E b_k\overline{b}_l ]\right|> (\log n)^{C}\left( \sum_{k\neq l} |a_{k,l}|^2\right)^{\oh}\right)\\
&&+\P\left(\left| \sum_{k=1}^n a_{k,k}[b_k\overline{b}_k-\E b_k\overline{b}_k] \right|> (\log n)^{C}\left(\sum_{k=1}^n |a_{k,k}|^2\right)^\oh\right)\\
&\leq &C n^{-\log\log n}. 
\end{eqnarray*}
\end{proof}

\begin{lemma}\label{bins}
Assume $E\in\Eck$, set $\Ieta=[E-\frac{\eta}{2},E+\frac{\eta}{2}]$ and let $\N_{\eta}$ 
denote the number of eigenvalues of $\YYsh\XXs\YYsh$ in $\Ieta$. 
If $\frac{(\log n)^{C}}{ n}<\eta<E/2$  then 
there exist  constants $c,K>0$, 
depending only on $\gamma$ such that
\begin{equation*}
\P\left(\N_{\eta}  \geq \frac{Kn\eta}{\sqrt{E}} \right)\leq n^{-c\log\log n}
\end{equation*}
for all large $n$. 
\end{lemma}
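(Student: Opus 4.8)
The plan is to bound $\N_\eta$ by $\tfrac54\,n\eta\,\Im\mn(E+i\eta)$, to show that if $\N_\eta$ were large then $\Im\mn(E+i\eta)$ would be forced to be of order $1/\N_\eta$, and to close this self-referential inequality; the quantitative engine is the anti-concentration bound of Lemma~\ref{like4.7ofWegner}. Write $z=E+i\eta$ and let $\mu_1,\dots,\mu_n$ be the eigenvalues of the product matrix $\YYsh\XXs\YYsh=\on T^{1/2}XX^{*}T^{1/2}$, so that $\mn$ is their Stieltjes transform. First I would note that $\mu_k\in\Ieta$ implies $(\mu_k-E)^2+\eta^2\le\tfrac54\eta^2$, so $\Im\mn(z)=\on\sum_k\eta/\big((\mu_k-E)^2+\eta^2\big)\ge\tfrac45\,\N_\eta/(n\eta)$, i.e.\ $\N_\eta\le\tfrac54\,n\eta\,\Im\mn(z)$. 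By the Silverstein--Bai relation \eqref{Silv1.2} (and $\Im(1/z)<0$) one has $\Im\mn(z)\le C'\,\Im\mnu(z)$ with $C'=C'(a,b)$, where $\mnu$ is the Stieltjes transform of $\on X^{*}TX$; and the Schur-complement identity underlying \eqref{keyident2} gives that the $j$-th diagonal entry of $(\on X^{*}TX-zI)^{-1}$ equals $-\big(z(1+\gamma_j)\big)^{-1}$ with $\gamma_j=\rjs(\Bj-zI)^{-1}\rj$. Hence, using $|1+\gamma_j|\ge\Im\gamma_j$ and $|z|\ge E$,
\[
\Im\mnu(z)=\frac1{bn}\sum_{j=1}^{bn}\Im\!\Big(\!-\frac1{z(1+\gamma_j)}\Big)\;\le\;\frac1{E}\max_{1\le j\le bn}\frac1{\Im\gamma_j},
\]
so everything reduces to a uniform lower bound for $\Im\gamma_j$.

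To get it, diagonalise $\Bj=\sum_k\mu^{(j)}_k w^{(j)}_k(w^{(j)}_k)^{*}$; then $\Im\gamma_j=\eta\sum_k|\langle w^{(j)}_k,\rj\rangle|^2/|\mu^{(j)}_k-z|^2\ge\tfrac4{5\eta}\sum_{\mu^{(j)}_k\in\Ieta}|\langle w^{(j)}_k,\rj\rangle|^2$. Since $\YYsh\XXs\YYsh=\Bj+\rj\rjs$ is a rank-one positive semidefinite perturbation of $\Bj$, the spectra interlace, so $\Bj$ has at least $\N_\eta-1$ eigenvalues in $\Ieta$. If $\N_\eta\le(\log n)^{2C_0}$ for the relevant constant $C_0$ the conclusion is immediate (as $\eta\ge(\log n)^{C}/n$ and $E\le\mhigh$), so I may assume $\N_\eta>(\log n)^{2C_0}$; then $\{w^{(j)}_k:\mu^{(j)}_k\in\Ieta\}$ is an orthonormal family of size $m_j\ge\N_\eta-1\ge\tfrac12\N_\eta$. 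Conditionally on $\Xj$ this family (and $m_j$) is frozen, and on $\To$ the matrix $S:=T^{1/2}$ has spectrum in a fixed interval $[\sm^2,\sM^2]\subset(0,\infty)$; applying Lemma~\ref{like4.7ofWegner} conditionally on $\Xj$ with this $S$, with $b=\xj$, $\epsilon=\tfrac12$, $m=m_j$ (for $C_0$ large enough the hypothesis of the lemma holds) gives, with conditional probability $\ge 1-Cn^{-\log\log n}$,
\[
\sum_{\mu^{(j)}_k\in\Ieta}\big|\xj^{*}T^{1/2}w^{(j)}_k\big|^2\;\ge\;\tfrac12\sm^2 m_j\;\ge\;\tfrac14\sm^2\,\N_\eta.
\]
Since $\rj$ is $T^{1/2}\xj$ scaled so that $\|\rj\|^2\asymp\on\|T^{1/2}\xj\|^2$, this turns into $\sum_{\mu^{(j)}_k\in\Ieta}|\langle w^{(j)}_k,\rj\rangle|^2\ge c_0\,\N_\eta/n$ for a constant $c_0=c_0(a,b)$. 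Intersecting $\X$, $\To$ and these $bn$ events yields a set $\Omega$ with $\P(\Omega)\ge 1-n^{-c\log\log n}$ on which $\Im\gamma_j\ge c_1\,\N_\eta/(n\eta)$ for every $j$, with $c_1:=\tfrac45 c_0$.

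On $\Omega$ the first paragraph then gives $\Im\mn(z)\le C'\,\Im\mnu(z)\le (C'/E)\,(\max_j\Im\gamma_j)^{-1}\le (C'/c_1)\,n\eta/(E\N_\eta)$, so $\N_\eta\le\tfrac54 n\eta\,\Im\mn(z)\le \tfrac{5C'}{4c_1}(n\eta)^2/(E\N_\eta)$, i.e.\ $\N_\eta\le K\,n\eta/\sqrt E$ with $K:=(5C'/(4c_1))^{1/2}$ depending only on $\gamma$ (through $a,b$ and the constants in $\X,\To$). Combined with the alternative $\N_\eta\le(\log n)^{2C_0}$ --- which is $\le K n\eta/\sqrt E$ as soon as the exponent $C$ in the hypothesis $\eta\ge(\log n)^{C}/n$ is taken $\ge 2C_0$ --- this is the assertion of the lemma.

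The hard part will not be any single estimate but the probabilistic bookkeeping of the second paragraph: Lemma~\ref{like4.7ofWegner} must be applied \emph{conditionally} on $\Xj$ so that the orthonormal family $\{w^{(j)}_k\}$ and the \emph{random} dimension $m_j$ are both frozen while $\xj$ remains an independent $\gamma$-subexponential vector; the dichotomy ``$\N_\eta\le(\log n)^{2C_0}$'' versus ``$m_j\gtrsim(\log n)^{2C_0}$'' has to be organised so the lemma's hypothesis is legitimately met; and one must verify that the $bn$-fold union of the $n^{-\log\log n}$-events, together with $\P(\X^{c}),\P(\To^{c})\le n^{-c\log\log n}$ (consequences of the local Marchenko--Pastur law for $\on YY^{*}$ and standard norm bounds for $\on\Xj\Xj^{*}$, established independently of this lemma), still leaves exceptional probability of order $n^{-c\log\log n}$.
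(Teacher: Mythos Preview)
Your proposal is correct and follows essentially the same route as the paper: both pass to the companion matrix $\frac{1}{n}X^{*}TX$, express its diagonal resolvent entries via the Schur complement, lower-bound the resulting quadratic form over the eigenvectors in $I_\eta$ by applying Lemma~\ref{like4.7ofWegner} conditionally on $(X_{(j)},Y)$, and close the self-referential inequality $\N_\eta\le Cn\eta\,\Im m\le C'(n\eta)^2/(E\N_\eta)$ to get $\N_\eta\le Kn\eta/\sqrt{E}$. One bookkeeping caution: with the paper's normalisation $r_j=N^{-1/2}T^{1/2}x_j$ and $B_{(j)}=N^{-1}T^{1/2}X_{(j)}X_{(j)}^{*}T^{1/2}$ (where $N=bn$), the Schur complement for $\big(\tfrac{1}{n}X^{*}TX-zI\big)^{-1}_{jj}$ actually involves the $\tfrac{1}{n}$-normalised minor rather than $B_{(j)}$, so your $\gamma_j$ differs from the true denominator by a harmless factor of $b$; the paper sidesteps this by working directly with $\tfrac{1}{n}T^{1/2}X_{(j)}X_{(j)}^{*}T^{1/2}$.
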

\begin{proof}
First, we use that $\YYsh\XXs\YYsh$ and $\Xs(\YYs)^{-1}X=\on \Xs T X $ have the same nonzero eigenvalues 
to justify working with the latter matrix. 
Now, following the proof of Lemma~8.1 in~\cite{ESYY10}, we need to bound the absolute value of the diagonal entries of
 $(\on \Xs T X-zI)^{-1}$.  
We consider the $(1,1)$ entry and  
let $x_1$ denote the first column of $X$ and $\Xo$  the matrix obtained by removing this column from $X$. 
We use the following identity for an arbitrary matrix $A$, which can be seen by using a singular value decomposition, 
\begin{equation*}
A(A^*A-zI)^{-1}A^*=I+z(AA^*-zI)^{-1}. 
\end{equation*}
We will use this identity for the matrix $A=\frac{1}{\sqrt{n}}T{\oh}\Xo$. 
By the matrix inversion formula, 
\begin{eqnarray*}
\left(\on \Xs TX-zI\right)^{-1}(1,1)&=&\frac{1}{ \frac{1}{n}x_1^* T x_1-z-\frac{1}{n^2}x_1^*T\Xo(\on\Xo^*T\Xo -zI)^{-1}\Xo^*T x_1 }\\
&=& \frac{1}{ \on x_1^* T x_1-z-\on x_1^*T^{\oh}(I+z(\frac{1}{n}T^{\oh}\Xo\Xo^* T^{\oh} -zI)^{-1})T^{\oh} x_1 }\\
&=&\frac{1}{-z-\on zx_1^*T^{\oh} (\on T^{\oh}\Xo\Xo^* T^{\oh}-zI)^{-1}T^{\oh} x_1 }. 
\end{eqnarray*}
Let $\mu_1,\ldots,\mu_{n-1}$ and $u_1,\ldots,u_{n-1}$ denote the eigenvalues and normalized eigenvectors of $\on T^{\oh}\Xo\Xo^*T^{\oh}$. 
Then setting $\xi_k=|u_k^* T^{\oh}x_1 |^2$ for $1\leq k\leq n-1$, we have 
\begin{eqnarray*}
\left| \left(\on \Xs T^{-1}X-zI\right)^{-1}(1,1)  \right| &=&\frac{1}{|z(1+ \frac{1}{n}\sum_{k=1}^{n-1}\frac{\xi_k}{\mu_k-z}  )|} \\
&\leq & \frac{n}{|z|\Im \sum_{k=1}^{n-1}\frac{\xi_k}{\mu_k-z}  } \\
&\leq&  \frac{n}{E \;\Im \sum_{k:|\mu_k-E|\leq \frac{\eta}{2}  }\frac{\xi_k}{\mu_k-z}}\\
&\leq &  \frac{Cn\eta}{E\sum_{k:|\mu_k-E|\leq \frac{\eta}{2}}\xi_k}.
\end{eqnarray*}
Continuing to follow~\cite{ESYY10}, we have 
\begin{equation}
\P\left(\N_{\eta}  \geq \frac{Kn\eta}{\sqrt{E}} \right)
\leq n\P\left( \sum_{k:\mk\in I_\eta}\xi_k\leq  \frac{Cn\eta}{K\sqrt{E}}\;\;\& \;\;\N_{\eta}\geq \frac{Kn\eta}{\sqrt{E}} \right). \label{probconditioned}
\end{equation}
By Theorem 3.1 in~\cite{PY13}, inequality~\eqref{eqTone} occurs with probability at least $1-Cn^{-\log \log n}$, 
i.e. $\oh\am\leq T\leq 2\aM$. 
By setting $\epsilon=\oh$ in Lemma~\ref{like4.7ofWegner} and choosing $K$ large enough, we satisfy 
\begin{equation*}
\oh\left(\oh\am\right)^2\sqrt{\N_{\eta}}\geq\oh\left(\oh\am\right)^2 \sqrt{\frac{Kn\eta}{\sqrt{E}}}\geq \oh\left(\oh\am\right)^2\sqrt{\frac{K}{\sqrt{E}}(\log n)^{C}}\geq 2\aM^2(\log n)^{C}.
\end{equation*}
We now apply Lemma~\ref{like4.7ofWegner} to obtain the claim. 
\end{proof}

\begin{lemma}\label{boundsB} 
Let $\mk^{(j)}$, $k=1,\ldots,n-1$ denote the eigenvalues of $\Bj$  for an arbitrary  $j=1,2,\ldots N$,   
and assume that $\eta>n^{-1}(\log n)^{C}$ and that $E$ satisfies  $(E-\lm)(\lp-E)\geq\kappa$.
Then, when $\To$ holds,  there exists a positive constant $K$ such that 
with probability at least $1-n^{-c\log\log n}$ 
\begin{equation}
\sup_{z\in{\cal E}^{(\mu)}}\max_{1\leq j\leq  N }\left\{ \on \sum_k \frac{1}{|\mk^{(j)}-z|}\right\}\leq K (\log n)^2\label{boundtrace}
\end{equation}
and
\begin{equation}
\sup_{z\in{\cal E}^{(\mu)}} \max_{1\leq j\leq  N  }\left\{\on 
\sum_k \frac{1}{|\mk^{(j)}-z|^2}\right\}\leq \frac{ K}{\eta}.\label{boundHS}
\end{equation}
\end{lemma}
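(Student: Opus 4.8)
The plan is to reduce both estimates to an upper bound on the local eigenvalue counting function of $\Bj$, and then to run a dyadic decomposition of the two sums according to the distance of $\mk^{(j)}$ to $E$. Since $\Bj=\Bn-\rj\rjs$ is a rank one downward perturbation of $\Bn$, the eigenvalues $\mk^{(j)}$ interlace with those of $\Bn$, so for every interval $I$ the number of $\mk^{(j)}$ in $I$ and the number of eigenvalues of $\Bn$ in $I$ differ by at most one. On the event $\To$ the matrix $T$ is comparable to its limiting profile, so running the argument of Lemma~\ref{bins} conditionally on $\To$ gives, for each fixed bulk value $E$ and each window size $\eta'$ with $n^{-1}(\log n)^{C}\le\eta'\le c_{0}$ (for a small constant $c_{0}=c_{0}(\kappa)<E/4$), the estimate $\#\{k:\,|\mk^{(j)}-E|\le\eta'\}\le K n\,\eta'$ with probability at least $1-n^{-c\log\log n}$; here $K$ absorbs the bounded factor $1/\sqrt{E}$, the $\kappa$-condition in the definition of ${\cal E}^{(\mu)}$ keeping $E$ away from $0$ and $\infty$. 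To promote this to a bound valid simultaneously for all bulk $E$, all admissible $\eta'$, and all $j\le N$, one takes a net in $E$ of spacing comparable to $\eta'$ (so that a window around $E$ sits inside a slightly larger window around a net point), uses monotonicity in $\eta'$ to interpolate between the dyadic values $\eta'=2^{m}\eta$, and invokes interlacing to remove the dependence on $j$; the resulting union is over only $\mathrm{poly}(n)$ events, which is absorbed by the probability $n^{-c\log\log n}$.

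Granting the counting bound $\#\{k:\,|\mk^{(j)}-E|\le 2^{m}\eta\}\le K n\,2^{m}\eta$ for $0\le m\le M$, where $M$ is chosen so that $2^{M}\eta\asymp c_{0}$ (hence $M=O(\log(1/\eta))=O(\log n)$), I split $\on\sum_{k}|\mk^{(j)}-z|^{-p}$, $p\in\{1,2\}$, into a ``near'' part $|\mk^{(j)}-E|\le c_{0}$ and a ``far'' part. The far part contains at most $n$ eigenvalues, each at distance at least $c_{0}$ from $z$, so it contributes at most $c_{0}^{-p}=O(1)$, irrespective of $\|\Bj\|$; if $\eta>c_{0}$ there is no near part and we are already done, so assume $\eta\le c_{0}$. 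For the near part, set $\mathcal{S}_{0}=\{k:|\mk^{(j)}-E|\le\eta\}$ and, for $m\ge1$, $\mathcal{S}_{m}=\{k:\,2^{m-1}\eta<|\mk^{(j)}-E|\le 2^{m}\eta\}$, so that $|\mathcal{S}_{m}|\le K n\,2^{m}\eta$ for all $m\ge0$, while $|\mk^{(j)}-z|\ge(2^{m-1}\eta)\vee\eta$ on $\mathcal{S}_{m}$. For $p=2$ the block $\mathcal{S}_{m}$ contributes at most $O(2^{-m}\eta^{-1})$, and the geometric sum over $0\le m\le M$ gives $O(\eta^{-1})$; this is~\eqref{boundHS}, and notably no logarithmic factor is produced. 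For $p=1$ each block $\mathcal{S}_{m}$ contributes $O(1)$, and since there are $M=O(\log n)$ of them the near part is $O(\log n)\le K(\log n)^{2}$; this is~\eqref{boundtrace}. The supremum over $z\in{\cal E}^{(\mu)}$ is taken care of last by a net argument: on $\{\Im z\ge n^{-1}\}$ the maps $z\mapsto\on\sum_{k}|\mk^{(j)}-z|^{-p}$ are Lipschitz with a polynomial-in-$n$ constant, so a net of spacing $n^{-C}$ controls the supremum up to an error $O(n^{-1})$, at the cost of a further $\mathrm{poly}(n)$ union bound.

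I expect the \emph{main obstacle} to be the uniformization in the first step: Lemma~\ref{bins} is a single-$E$, single-matrix statement, so one must carefully assemble it with interlacing, with the net in $E$, and with monotonicity in the window size, and then check that the exceptional probabilities, each of order $n^{-c\log\log n}$, survive the polynomially large union bound --- which they do, since $n^{-c\log\log n}$ eventually beats any fixed power of $n$. Everything downstream --- the dyadic summation, the estimate on the far eigenvalues, and the final uniformity in $z$ and $j$ --- is routine bookkeeping; the only points requiring mild care are tracking the scaling that identifies the bulk of $\Bn$ with the interval $[\mu_{-},\mu_{+}]$ when applying Lemma~\ref{bins}, and keeping in mind that $K$ is allowed to depend on $\kappa$ (and on $\gamma,a,b$) through the lower bound on $E$.
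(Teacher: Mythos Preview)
Your proposal is correct and follows essentially the same approach as the paper: use the counting estimate from Lemma~\ref{bins}, run a dyadic summation on the distances $|\mu_k^{(j)}-E|$ (the paper does not spell this out but simply points to the calculation in Proposition~4.3 of~\cite{ESY10}, which is exactly what you wrote down), and then pass to uniformity in $z$ and in $j$ by a Lipschitz/net argument together with a polynomial-size union bound. Your explicit use of interlacing to transfer the counting bound from $B_n$ to $B_{(j)}$ is a clean way to handle the $j$-dependence, and your observation that the $p=1$ sum actually gives $O(\log n)$ rather than $(\log n)^2$ is correct; the paper's stated bound is simply not sharp in that exponent.
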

\begin{proof}
For a fixed $z\in {\cal E}^{(\mu)}$ and index $j$, 
given the bound on $\P\left(\N_{\eta}  \geq \frac{Kn\eta}{\sqrt{E}} \right)$ from Lemma~\ref{bins}, 
the claim follows from the same calculation as is done in the proof of Proposition~4.3 in~\cite{ESY10}.
This proves the estimate for each fixed $z$. To obtain the result simultaneously for all $z\in {\cal E}^{(\mu)}$, we 
notice that the  derivatives of the functions to be bounded in~\eqref{boundtrace} and~\eqref{boundHS} are 
bounded by $Cn\eta^{-4}$ on $\mathcal{E}^{(\mu)}$.  
Thus, we may discritize $\mathcal{E}^{(\mu)}$ to $Cn^2\eta^{-8}$ points and take the union bound with respect to the 
discrete grid and  the indices $1\leq j\leq  N  $. 
\end{proof}

\begin{proof}{\bf of Proposition~\ref{events}}
We will prove the following four inequalities.
\begin{eqnarray}
\P(\X)&\geq&1-n^{-c\log\log n}\label{aa}\\
\P(\To)&\geq & 1-n^{-c\log\log n}\label{bb}\\
\P(\Tt|\To)&\geq & 1-n^{-c\log\log n}\label{cc}\\
\P(\B|\To)&\geq & 1-n^{-c\log\log n}.\label{dd}
\end{eqnarray}

Let $x$ denote an arbitrary column of $X$. 
We assume that $n$ is large enough so that 
$(\log n)^{C}<n^{\oh}$ and  apply Lemma B.2 of \cite{EYY11} to the identity matrix.  
Then~\eqref{aa} follows by summing the  $N=b n$   probabilities.
Inequality~\eqref{bb}
follows from the much stronger statement in Theorem~3.1 of~\cite{PY13}.  

Recall the definition of  $\otau_1,\ldots,\otau_n$ from~\eqref{properplacement} 
and let $\ot_1,\ldots,\ot_n$ denote the eigenvalues of $\frac{1}{a n}XX^{*}$. 
Then 
Theorem~3.3 of~\cite{PY13} gives 
\begin{equation*}
|\otau_k-\ot_k|\leq  (\log n)^{C_T\log\log n} n^{-2/3}(\min(n+1-k,k))^{-1/3}\;\;\textnormal{for all}\;k =1,2,\ldots n,
\end{equation*}
with probability at least $1-C\exp\left(-(\log n)^{c\log\log n}\right)$.  
Lemma~\ref{ttau} then implies that 
\begin{equation*}
|\tau_k-t_k|\leq  (\log n)^{C_T\log\log n} n^{-2/3}(\min(n+1-k,k))^{-1/3}\;\;\textnormal{for all}\;k
\end{equation*}
with the same probability. 
Thus, assuming $\To$ holds to address $t_1$ and $t_n$, we have shown~\eqref{cc}. 
Lastly, Lemma~\ref{boundsB} yields~\eqref{dd}.
Together these give the bound
\begin{equation*}
\P(\X\cap\To \cap\Tt\cap\B)\geq  1-n^{-c\log \log n}.
\end{equation*}
\end{proof}
\end{subsection}

\subsection{Integral Equation}\label{inteq}

In this section  we show that the integral equation~\eqref{implicitequation} that $m_p(z)$ satisfies is stable.
This means that if $\md$ satisfies 
\begin{equation}
 \md(z)=\int \frac{1}{\lambda(1-\frac{1}{a}-\frac{1}{a}z\md(z))-z}\finv(\lambda)d\lambda+\delta(z),\label{perta}
\end{equation}
for some small $\delta(z)$, 
then $\md$ is close to $\mP$.

\begin{lemma}\label{stability}
Assume $\md(z)\in \Cp$ is analytic on the upper half plane  and is a solution to the perturbed equation
\eqref{perta}. Let $0<\eta'< 1$, $\kappa>0$ and assume $\To$ holds. Let  $E$ be  chosen so that 
$\left(\lp-\frac{1}{E+1}\right)\left(\frac{1}{E+1}-\lm\right)\geq \kappa$.
There exists a small universal constant $c_1>0$ 
such that if 
\begin{equation}
|\mud(E+i\eta)-\munp(E+i\eta)|\leq c_1\sqrt{\kappa+\eta}\label{constdistance}
\end{equation} 
for all $\eta\in [\eta',1]$ and 
\begin{equation}
\sup_{\eta\in[\eta',1]}|\delta(E+i\eta)|<\delta_1
\end{equation}
for some $\delta_1 \le c_1\sqrt{\kappa} $,   then  
\begin{equation}
|\mud(E+i\eta)-\munp(E+i\eta)|\leq C_2\frac{\delta_1}{\sqrt{\kappa+\eta}}\label{claima}
\end{equation}
and 
\begin{equation}
|\md(E+i\eta)-\mP(E+i\eta)|\leq C_2\frac{\delta_1}{\sqrt{\kappa+\eta}},\label{claimb}
\end{equation}
for all $\eta\in [\eta',1]$, 
where $C_2$ depends only on $c_1$. 
\end{lemma}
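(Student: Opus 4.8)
The plan is to transfer the perturbed equation~\eqref{perta} to the companion Stieltjes transform and then carry out a quantitative stability analysis of the resulting fixed-point equation. Let $\munp$ be the companion transform of $\mP$, defined (as in~\eqref{Silv1.2}) by $\munp=-\frac1z(1-\oa)+\oa\mP$; for any $m$ and its companion $\mun=-\frac1z(1-\oa)+\oa m$ one has the algebraic identity $\lambda(1-\oa-\oa zm)-z=-z(1+\lambda\mun)$. This rewrites~\eqref{implicitequation} as $z=g(\munp(z))$ with $g(w):=-\frac1w+\oa\int\frac{\lambda\finv(\lambda)\,d\lambda}{1+\lambda w}$, and rewrites~\eqref{perta} as $g(\mud(z))=z+\varepsilon(z)$ with $\varepsilon(z)=-\frac{z\,\delta(z)}{a\,\mud(z)}$. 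Since $m=a\mun+\frac{a-1}{z}$, any bound of the form~\eqref{claima} on $|\mud-\munp|$ gives~\eqref{claimb} with a comparable constant, so it suffices to prove~\eqref{claima}; also $\md\in\Cp$ forces $\Im\,\mud>0$.

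Next I would set $\Delta(z):=\mud(z)-\munp(z)$, subtract $g(\munp)=z$ from $g(\mud)=z+\varepsilon$, and combine the fractions (using $\int\finv=1$) to get the exact identity $\Delta(z)\,D(z)=\varepsilon(z)$, where $D(z)=\frac{1}{\mud(z)\munp(z)}-\oa\int\frac{\lambda^2\finv(\lambda)\,d\lambda}{(1+\lambda\mud(z))(1+\lambda\munp(z))}$. Putting $\mud=\munp$ gives $D_0(z):=\frac{1}{\munp(z)^2}-\oa\int\frac{\lambda^2\finv(\lambda)\,d\lambda}{(1+\lambda\munp(z))^2}$, and differentiating $g(\munp(z))=z$ in $z$ shows $D_0(z)\,\munp'(z)=1$. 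Moreover a short computation gives $D(z)-D_0(z)=\Delta(z)\bigl(-\frac{1}{\mud\munp^2}+\oa\int\frac{\lambda^3\finv(\lambda)\,d\lambda}{(1+\lambda\mud)(1+\lambda\munp)^2}\bigr)$. So everything reduces to three a priori estimates on $\munp$ over the region of~\eqref{constdistance} (that is, $\Eek$ with $\eta\in[\eta',1]$): (i) $|\munp(z)|\gtrsim1$; (ii) $\inf_{\lambda\in[\am,\aM]}|1+\lambda\munp(z)|\gtrsim1$; (iii) $|\munp'(z)|\lesssim(\kappa+\eta)^{-1/2}$, i.e. $|D_0(z)|\gtrsim\sqrt{\kappa+\eta}$. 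Granting (i)--(iii): (i) gives $|\varepsilon(z)|\lesssim|\delta(z)|$; (i), (ii) and~\eqref{constdistance} with $c_1$ small keep $|\mud|$ and $|1+\lambda\mud|$ ($\lambda\in[\am,\aM]$) bounded below, hence the parenthesis above is $O(1)$ and $|D(z)-D_0(z)|\le C|\Delta(z)|\le Cc_1\sqrt{\kappa+\eta}\le\tfrac12|D_0(z)|$ once $c_1$ is fixed small (universally); therefore $|D(z)|\ge\tfrac12|D_0(z)|\gtrsim\sqrt{\kappa+\eta}$ and $|\Delta(z)|=|\varepsilon(z)|/|D(z)|\lesssim|\delta(z)|/\sqrt{\kappa+\eta}\le C_2\,\delta_1/\sqrt{\kappa+\eta}$, which is~\eqref{claima}. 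This argument runs at each fixed $\eta\in[\eta',1]$, so no bootstrap in $\eta$ is needed.

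It remains to prove (i)--(iii), and this is where the explicit density~\eqref{densityp} is used. Since $f_p$ vanishes like a square root at $\mlow$ and $\mhigh$, one has, on the region in question, $\Im\,\mP(E+i\eta)\asymp\sqrt{\textnormal{dist}(E,\{\mlow,\mhigh\})+\eta}\gtrsim\sqrt{\kappa+\eta}$, hence $\Im\,\munp(E+i\eta)\gtrsim\sqrt{\kappa+\eta}>0$, and $\mP$ (hence $\munp$) extends continuously up to the real axis on a fixed compact neighbourhood of $[\mlow,\mhigh]$. On that compact set $\munp$ never vanishes (immediate from $g(\munp)=z$) and never meets $[-1/\am,-1/\aM]$: off the two edge points this is clear because $\Im\,\munp>0$ there, and at the edge points it is precisely the regular-edge condition, which is equivalent to the square-root vanishing of $f_p$ in~\eqref{densityp}. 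A compactness argument then upgrades non-vanishing to the uniform lower bounds (i) and (ii), with constants depending only on $a,b$. Estimate (iii) is the standard bound for the derivative of the Stieltjes transform of a density with square-root edges, evaluated at a point at distance $\gtrsim\kappa$ from both edges.

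I expect the last paragraph to be the main obstacle: turning the qualitative ``regular-edge'' picture for $f_p$ into effective, uniform-in-$\kappa$ estimates (i)--(iii) on $\munp$ over the whole bulk region is exactly the ``effective stability analysis of the self-consistent equation'' highlighted in the introduction, and the only place where the precise form of $f_p$ in~\eqref{densityp} really matters; the companion-transform reduction, the identity $\Delta\,D=\varepsilon$, and the comparison $D\approx D_0$ are routine once (i)--(iii) are available. One should also track the $\kappa$-dependence of the constants --- the bound produced this way is not sharp in $\kappa$, consistent with the remark after Theorem~\ref{thmmanova}.
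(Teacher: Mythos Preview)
Your approach is correct and structurally parallel to the paper's: both rewrite the perturbed equation in terms of $\mud$, extract a linear relation $\Delta\cdot(\text{stability factor})=\text{error}$, and reduce everything to three a~priori inputs on $\munp$. Your quantities $D,D_0$ are related to the paper's $K,K_0$ by $D_0\,\munp^2=1-K_0$ (and similarly for $D,K$ up to an $O(\delta)$ correction), so the algebraic setup is the same in slightly different normalization; your observation $D_0\,\munp'=1$ is a clean way to see this factor.

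There is one genuine difference, in how the key bound $|D_0(z)|\gtrsim\sqrt{\kappa+\eta}$ (equivalently the paper's $|1-K_0|\gtrsim\sqrt{\kappa+\eta}$, Lemma~\ref{Kzero}, part~4) is obtained. You go through $D_0=1/\munp'$ and bound $|\munp'(E+i\eta)|\lesssim(\kappa+\eta)^{-1/2}$ directly from the explicit square-root behavior of $f_p$ in~\eqref{densityp}; this is valid and conceptually transparent. The paper instead takes the imaginary part of the self-consistent equation~\eqref{eqmp} to get the identity $1=\eta|\munp|^2/\Im\munp+\frac1a\int|A|^2$, then runs a phase-of-$A(\lambda)$ argument to show $|1-K_0|\ge|B-K_0|\gtrsim B\sqrt{\kappa+\eta}$ with $B=\frac1a\int|A|^2$. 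The paper's route uses the square-root density only once (for $\Im\mP\gtrsim\sqrt{\kappa+\eta}$) and otherwise works intrinsically from the fixed-point equation; it would generalize more readily to models where $f_p$ is not explicit. Your route is shorter but leans more heavily on knowing $f_p$. Similarly, your compactness argument for (ii) is fine but soft; the paper gets the uniform lower bound on $|1+\lambda\munp|$ quantitatively from $\int|1+\lambda\munp|^{-2}\finv\,d\lambda\le C$ (a consequence of the same imaginary-part identity) combined with the square-root shape of $\finv$, which yields constants depending only on $a,b$ without invoking the edge behavior of $\munp$ directly.
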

We define 
\begin{equation}
\munp(z)=-\frac{1}{z}\Big(1-\oa\Big)
+\oa \mP(z)\;\;\textnormal{   and   }\;\; 
\mud(z)=-\frac{1}{z}\Big(1-\oa\Big)+\oa \md(z).\label{under}
\end{equation}  
Notice that for $\delta=0$, $\md$ and $\mud$ are the same as $\mP$ and $\munp$.
Simple algebra shows that from~\eqref{perta} we have the following equation for $\mud$:
\begin{equation}
\mud(z)\left(z-\oa \int \frac{\lambda}{1+\lambda\mud(z)}\fdl   \right)=-1+\frac{z\delta}{a}.     \label{pert2}
\end{equation}

We will work mostly with $\mud$ instead of $\md$. 
We introduce the notation  
\begin{equation*}
\Kz=\oa\int A^2(\lambda) \fdl\;\;\textnormal{and}\;\;  A(\lambda)=  \frac{\lambda \munp(z)}{1+\lambda\munp(z)}. 
\end{equation*}
Note that both $\Kz$ and $A$ depend on $z$, which we omit  writing here but will include 
in the proof of Lemma~\ref{stability}. 
The following lemma bounds $\Kz$ away from $1$,  which will be used in the proof of Lemma~\ref{stability}. 
We set 
\begin{equation*}
{\cal G}:=\Big\{z=E+i\eta\in \C:\;\;E\in(\mlow,\mhigh),\;\left(\lp-\frac{1}{E+1}\right)
\left(\frac{1}{E+1}-\lm\right)\geq \kappa,\;0 < \eta\leq 1\Big\}. 
\end{equation*}
\begin{lemma}\label{Kzero}   
There exist two positive constants $C$ and $c$ 
 such that the 
 following statements hold uniformly  for all $z=E+i\eta\in{\cal G}$. 
\begin{enumerate}
\item  
\begin{equation}
c\leq |\munp(z)|\leq C,   \qquad \mbox{and}\qquad c\leq |\mP(z)|\leq C,  \label{range} 
\end{equation}
and $\munp(z)$, $\mP(z)$ extend continuously to the interval  $[\mlow,\mhigh]\subset \R$.  
\item 
\begin{equation}
\Im \munp(z)\geq \oa\Im \mP(z)\geq c\sqrt{\kappa +\eta }.\label{lowermp}
\end{equation}
\item Let $\mun(z)$ be a function $\C^+\rightarrow \C^+$. 
Suppose for some $G\subset \G$
\begin{equation}\label{mmm}
\sup_{z\in G}|\munp(z)-\munder(z)|<c.
\end{equation}
When $\To$ holds 
with a sufficiently small $c$  in its definition \eqref{eqTone}, then 
\begin{equation}
\sup_{z\in G}\| (\munder(z)T+I)^{-1}\|<C.\label{mtplusI}
\end{equation}
\item 
\begin{equation}
|1-\Kz(z)|>c\sqrt{\kappa  + \eta }.\label{distKz}
\end{equation}
\end{enumerate}
\end{lemma}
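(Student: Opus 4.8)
My plan is to treat the four items in order. Throughout I would keep explicit the link, obtained by substituting $\mu=\tfrac1\lambda-1$ in $\eqref{densityp}$, between $\mP$ and the explicit function $\mM$ of $\eqref{mN}$: $\mP(z)=-\tfrac1{1+z}-\tfrac1{(1+z)^2}\mM\big(\tfrac1{1+z}\big)$, with $\mM(w)=\big((2-a-b)w+a-1+\sqrt{D(w)}\big)/\big(2w(1-w)\big)$ and $D(w)=(a+b)^2(w-\lm)(w-\lp)$ (the polynomial under the root in $\eqref{mN}$, whose zeros are the edges of $\fp$), the branch being the one that makes $\mP$ a Stieltjes transform; one also reads off from $\eqref{densityp}$ that $\fp(x)\asymp\sqrt{(x-\mlow)(\mhigh-x)}$ on $[\mlow,\mhigh]$. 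For items 1 and 2: the upper bound in $\eqref{range}$ and the continuous extension to $[\mlow,\mhigh]$ are standard for a Stieltjes transform of a square-root-edge probability measure, and $\munp$ inherits both from $\eqref{under}$; the lower bound $|\mP|,|\munp|\ge c$ follows by compactness once one observes that $\mP$ and $\munp$ do not vanish on a neighbourhood of $[\mlow,\mhigh]$ in $\overline{\Cp}$ (using $\Im\mP>0$ on $\Cp$, $\Im\mP(E+i0)=\pi\fp(E)>0$ on $(\mlow,\mhigh)$, and that $\munp=0$ would force $\mP(z)=\tfrac{a-1}{z}$, which $\Im\mP$ rules out). The first inequality in $\eqref{lowermp}$ is immediate from $\eqref{under}$ ($\Im\munp=\tfrac{\eta}{|z|^2}(1-\oa)+\oa\Im\mP\ge\oa\Im\mP$), and the second is the square-root-edge lower bound $\Im\mP(E+i\eta)\ge c\sqrt{\mathrm{dist}(E,\{\mlow,\mhigh\})+\eta}$ combined with the fact that the constraint defining ${\cal G}$ forces $\mathrm{dist}(E,\{\mlow,\mhigh\})\ge c\kappa$.

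For item 3: from $|\munp|\le C$ and $\Im\munp\ge c\sqrt{\kappa+\eta}$ we get $\mathrm{dist}\big(-1/\munp(z),\R\big)=\Im\munp(z)/|\munp(z)|^2\ge c\sqrt{\kappa+\eta}$, so under $\To$ the point $-1/\munp(z)$ is at distance $\ge c\sqrt{\kappa+\eta}$ from $\mathrm{spec}(T)\subset[(1-c)\am,(1+c)\aM]\subset\R$; hence $|1+t\munp(z)|=|\munp(z)|\,|t-(-1/\munp(z))|\ge c\sqrt{\kappa+\eta}$ for every eigenvalue $t$ of $T$, and, under $\eqref{mmm}$ with $c$ small, $|1+t\munder(z)|\ge|1+t\munp(z)|-(1+c)\aM|\munder-\munp|\ge\tfrac12 c\sqrt{\kappa+\eta}$, giving $\|(\munder(z)T+I)^{-1}\|\le C$, which is $\eqref{mtplusI}$.

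Item 4 is the heart. I would derive an exact formula for $1-\Kz$. Write the $\delta=0$ case of $\eqref{pert2}$ as $\Phi(\munp(z),z)=0$ with $\Phi(m,z):=m\big(z-\oa\int\tfrac{\lambda}{1+\lambda m}\fdl\big)+1$. Then $\partial_z\Phi(m,z)=m$, and, using $z-\oa\int\tfrac{\lambda}{1+\lambda\munp}\fdl=-1/\munp$ together with $\oa\int\tfrac{\lambda^2\munp^2}{(1+\lambda\munp)^2}\fdl=\Kz(z)$, one gets $\partial_m\Phi(\munp(z),z)=(\Kz(z)-1)/\munp(z)$; differentiating $\Phi(\munp(z),z)=0$ in $z$ then yields
\begin{equation*}
1-\Kz(z)=\frac{\munp(z)^2}{\munp'(z)}.
\end{equation*}
By item~1 $|\munp(z)|\ge c$, so it remains to bound $|\munp'(z)|\le C/\sqrt{\kappa+\eta}$: writing $\munp'(z)=\tfrac{a-1}{a z^2}+\oa\mP'(z)$ and differentiating $2z\mP(z)=-(a+1)+(a+b)w-\sqrt{D(w)}$ ($w=\tfrac1{1+z}$), the only possibly large term is $-\tfrac{D'(w)w'}{2\sqrt{D(w)}}$, and $|\sqrt{D(w)}|=(a+b)\sqrt{|w-\lm||w-\lp|}\ge c\sqrt{\kappa+\eta}$ on ${\cal G}$ — one factor is bounded below, and for the other $|\Im w|\asymp\eta$ while $\mathrm{dist}(\Re w,\{\lm,\lp\})\gtrsim\kappa$ once $\eta\lesssim\sqrt\kappa$, and $\kappa+\eta\lesssim|\Im w|$ once $\eta\gtrsim\sqrt\kappa$ — so $|\mP'(z)|,|\munp'(z)|\le C/\sqrt{\kappa+\eta}$, whence $|1-\Kz(z)|\ge c\sqrt{\kappa+\eta}$, which is $\eqref{distKz}$.

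The identity $1-\Kz=\munp^2/\munp'$ is a short computation, so the real work is the two matching square-root-edge estimates — $\Im\mP\gtrsim\sqrt{\kappa+\eta}$ and $|\mP'|\lesssim1/\sqrt{\kappa+\eta}$ — which both reduce to the lower bound $|\sqrt{D(w)}|\gtrsim\sqrt{\kappa+\eta}$ together with the geometric fact that the constraint defining ${\cal G}$ keeps $\tfrac1{1+E}$ (hence $\Re\tfrac1{1+z}$ up to an $O(\eta^2)$ error) a distance $\gtrsim\kappa$ from $\{\lm,\lp\}$; I expect this geometry and the branch-of-square-root bookkeeping in the explicit formula to be the only slightly delicate points.
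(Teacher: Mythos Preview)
Your items 1 and 2 are fine and parallel the paper. Item 4 is correct but takes a genuinely different route, and item 3 has a gap.

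\textbf{Item 4.} The identity $1-\Kz(z)=\munp(z)^2/\munp'(z)$ (which follows, as you say, by implicitly differentiating \eqref{pert2} at $\delta=0$) is a clean observation, and together with the explicit formula it reduces \eqref{distKz} to the geometric bound $|\sqrt{D(w)}|\ge c\sqrt{\kappa+\eta}$ for $w=1/(1+z)$. The paper argues quite differently and never uses a closed form for $\mP$: it sets $B=\oa\int|A(\lambda)|^2\fdl$, reads off $|\Kz|\le B\le 1$ from the imaginary part of \eqref{eqmp}, and then shows $|1-\Kz|\ge|B-\Kz|\ge cB\sqrt{\kappa+\eta}$ by a case split on the size of $\Im\munp$, bounding $\Im A/|A|$ above and below. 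Your route is shorter and more transparent here; the paper's argument has the advantage of working from the self-consistent equation alone, so it would survive in situations where no explicit formula for the limiting transform is available.

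\textbf{Item 3.} Here your bound is too weak. Using only $\mathrm{dist}(-1/\munp,\R)=\Im\munp/|\munp|^2$ gives $|1+t\munp(z)|\ge c\sqrt{\kappa+\eta}$, so the perturbation step $|1+t\munder|\ge|1+t\munp|-(1+c)\aM|\munder-\munp|$ stays positive only if the constant in \eqref{mmm} is taken $\lesssim\sqrt{\kappa}$, and even then you get $\|(\munder T+I)^{-1}\|\le C/\sqrt{\kappa}$ rather than a $\kappa$-uniform $C$. The explicit $\sqrt{\kappa+\eta}$ in items 2 and 4 indicates that $C,c$ are meant to be $\kappa$-independent, and this matters downstream: Lemmas~\ref{bounddj} and~\ref{continuity} feed $M\le C$ into the error term for $d_j$, so a $1/\sqrt{\kappa}$ in $C$ would worsen the final estimate in Theorem~\ref{thmprod}. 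The paper gets the missing uniform bound by a different mechanism: the imaginary part of \eqref{eqmp} gives $\oa\int|A|^2\fdl\le 1$, hence $\int|1+\lambda\munp|^{-2}\fdl\le C'$; if $w(z):=\inf_{\lambda\in[\am,\aM]}|1+\lambda\munp(z)|$ were small, then on a subinterval $J$ of length $\asymp w$ one has $|1+\lambda\munp|\le 2w$, and the square-root edge of $\finv$ forces $\int_J\fdl\gtrsim w^{3/2}$, so $C'\gtrsim w^{-1/2}$, i.e.\ $w\ge c'$ uniformly in $\kappa,\eta$. This $\kappa$-free lower bound $|1+\lambda\munp(z)|\ge c'$ is exactly what your item~3 is missing; the paper also reuses it inside its own proof of item~4.
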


\begin{proof}
1.) From 
$\eqref{pert2}$ with $\delta=0$ we have the following equation for $\munp(z)$
\begin{equation}
-\frac{1}{\munp(z)}=z-\oa\int \frac{\lambda}{1+\lambda \munp(z)}\fdl. \label{eqmp}
\end{equation}
If $\{z_k\}_{k=1}^\infty$ is   sequence in $\{E+i\eta:\;E\in (\mlow,\mhigh),\;0< \eta\leq 1\}$ 
such that either $\lim_{k\rightarrow\infty}|\munp(z_k)|=0$ or $\lim_{k\rightarrow\infty}|\munp(z_k)|=\infty$, 
then  since $0<\mlow \le |z_k|\le \mhigh+1$ for all $k$,~\eqref{eqmp} would be violated. 
Thus, no such sequence exists, which proves~\eqref{range}. 
The continuous extension follows from $\munp$ being analytic on $\C^+$.  
The statements for $\mP$ follow from \eqref{under} and the fact that $E=\Re z$ is separated
away from zero. 

2.)
We  define the  interval $J:=[\am+c(\kappa +\eta ),\aM-c(\kappa+ \eta )]$; 
by setting the constant $c$ sufficiently small and using $\eta\le 1$ we
can ensure that $|J|\ge \frac{1}{2}(\aM-\am)$. 
Since $\fp$ has a square root singularity near the edge of its support, 
we have $\fp(\lambda)\geq c\sqrt{\kappa +\eta }$ for all $\lambda\in J$. Thus 
\begin{eqnarray}
\Im \mP(z)&=&\int\frac{\eta}{(E-\lambda)^2+\eta^2}f_p(\lambda)d\lambda    \nonumber\\
&\geq&c\sqrt{\kappa + \eta }\int_{J}\frac{\eta}{(E-\lambda)^2+\eta^2}d\lambda\nonumber\\
&\geq& c\sqrt{\kappa+ \eta },
\end{eqnarray} 
where the last inequality follows from the fact that the length of $J$  and the distance from 
$E$ to $J$ are both  $O(1)$ and $\eta\leq 1$. 
We then also have  
$\Im \munp(z)\geq \oa\Im \mP(z)\geq c_1\sqrt{\kappa+ \eta}$
maybe with a smaller constant $c_1$.

3.) 
We take the imaginary part of~\eqref{eqmp} to obtain 
\begin{equation}\label{optical}
\frac{\Im \munp}{|\munp|^2}=\eta+\oa\int  \frac{\lambda^2\Im\munp}{|1+\lambda \munp|^2}\fdl,
\end{equation}
so that 
\begin{equation}
1=\frac{\eta |\munp|^2}{\Im \munp}+\oa\int |A(\lambda)|^2\fdl\label{oneeq}
\end{equation}
and 
\begin{equation}
\oa\int |A(\lambda)|^2\fdl\leq 1.\label{mpCS}
\end{equation}
{F}rom~\eqref{mpCS}, 
 the uniform bound on $|\munp(z)|$ and the lower bound on the support of 
$\finv$ we have 
\begin{equation}
\int \frac{1}{|1+\lambda \munp(z)|^2}\fdl<C'\label{boundint}
\end{equation}
with some constant $C'$ uniformly for all $z\in\G$.
Suppose that 
\begin{equation*}
w(z):=\inf_{\lambda\in[\am,\aM]}|1+\lambda\munp(z)|
\end{equation*}
is attained at $\lambda=\lambda_0(z)$ for any $z\in \G$. 
Note that $w(z)\ge \am \Im \munp(z)\ge c\sqrt{\kappa+\eta}$ with some positive constant $c$,
where we used \eqref{lowermp}.
Since the derivative of $\lambda\mapsto |1+\lambda\munp(z)|$
is uniformly bounded by~\eqref{range},
there exists a subinterval $J= J(z)\subset[\am,\aM]$ of length at least $cw(z)$
 such that $|1+\lambda\mun(z)|\leq 2w(z)$ 
for all $\lambda\in J$. 
Since $\finv(\lambda)$ has a square-root singularity at its edges and is bounded away from zero between the 
edges, we have 
\begin{equation}
C'\geq \int \frac{1}{|1+\lambda \munp(z)|^2}\fdl\geq \frac{1}{4 w(z)^2}\int_J \fdl\geq \frac{c}{\sqrt{w(z)}}.
\end{equation} 
Therefore we have a uniform lower bound  
\begin{equation}\label{uniflow}
|1+\lambda\munp(z)|>c',
\end{equation}
for $c'$ depending only on  the constant $C'$ in \eqref{boundint},   
for all $\lambda\in (\am,\aM)$ and for all $z\in \G$.  
Using continuity in both $\lambda$ and $\munp(z)$
and using that \eqref{mmm} holds with a sufficiently small $c$,
 we have $|1+t\munder(z)|>c$ for all 
$t\in[(1-c)\am,(1+c)\aM]$ and $z\in G$ if $c$ is chosen sufficiently small.
By applying the spectral theorem for $T$ and \eqref{eqTone} with a small $c$, we obtain
\eqref{mtplusI}.

4.) The variable $z$ plays no role in the remainder of the proof and so we omit it from the notation. 
By the assumption on $E$, we have $\Im \munp\geq c\sqrt{\kappa  +\eta }$. 
The property $|\munp(z)|<C$  and $\lambda\leq \aM$ give
\begin{equation}
\frac{\Im A(\lambda)}{\left|A(\lambda)\right|}
=\frac{  \Im  \munp}{|1+\lambda \munp||\munp|}
\geq  c\Im \munp(z) \ge c\sqrt{\kappa  +\eta } .\label{lowerim}
\end{equation}
We set 
\begin{equation*}
B:=\oa\int |A(\lambda)|^2\fdl, 
\end{equation*}
so that by~\eqref{mpCS} we have   $|\Kz|\leq B<1$. 
We claim that  
\begin{equation}
|B-\Kz|\geq cB \sqrt{\kappa +\eta} \label{distB}
\end{equation}
 for a positive constant $c$. 

By the lower bound on $|\munp(z)|$ in~\eqref{range} 
 and on $|1+\lambda\munp(z)|$ in~\eqref{uniflow},   
there exists a constant $C$  such that 
\begin{equation}
\frac{\Im A(\lambda)}{|A(\lambda)|}=\frac{\Im \munp(z)}{|1+\lambda\munp(z)||\munp(z)|}\leq C\Im \munp(z)\label{boundsin}
\end{equation} 
for all $z\in \G$ and $\lambda\in(\am,\aM)$.
Let $\epsilon>0$ be  a sufficiently small constant so that $1- 2C\epsilon\geq \oh$
with the constant $C$ from \eqref{boundsin}. 
If $\Im \munp(z)>\epsilon$, then, using~\eqref{lowerim},
\begin{eqnarray*}
|B-\Kz|&=&
\oa\left| \int\left(| A(\lambda)|^2-  A^2(\lambda)\right)\fdl   \right|\\
&=&\oa\left| \int 2(\Im A(\lambda))^2\fdl -2i\int (\Re A(\lambda))(\Im A(\lambda))\fdl\right|\\
&\geq &\frac{2}{a} \int \left(\Im A(\lambda) \right)^2 \fdl   \\
&\geq & c\epsilon\Im \munp(z)\int |A(\lambda)|^2\fdl \\
&\geq & c\epsilon B\sqrt{\kappa+\eta}.
\end{eqnarray*}

If $\Im \munp(z)\leq \epsilon$, then we set $A(\lambda)=e^{i\phi(\lambda)}|A(\lambda)|$. 
We note that $\phi\in(0,\pi)$ since $\Im A(\lambda)>0$, and that $\phi$ is well-defined since 
$|A|\neq 0$. 
By~\eqref{boundsin} we have $\sin \phi\leq C\epsilon$, 
and by continuity, either $0\leq \phi\leq C\epsilon$ or $\pi-C\epsilon\leq \phi\leq \pi$. 
In both cases we have $\cos\phi\geq \oh$ if $\epsilon$ is small. 
So we have 
\begin{eqnarray*}
|B-\Kz|&=&
\oa\left| \int\left(| A(\lambda)|^2-  A^2(\lambda)\right)\fdl   \right|\\
&=&\frac{2}{a}\left| \int |A(\lambda)|^2e^{i\phi}\sin\phi\fdl\right| \\
&\geq &\frac{2}{a}\left| \int |A(\lambda)|^2  \cos \phi \sin \phi\fdl \right|
-\frac{2}{a}\int |A(\lambda)|^2\sin^2\phi \fdl \\
&\geq & \oa \int |A(\lambda)|^2 (\sin \phi-  2  \sin^2\phi)\fdl\\
&\geq& \frac{1}{2a}  \int |A(\lambda)|^2 \sin \phi\fdl\\
&\geq& c\Im \munp(z)\int |A(\lambda)|^2\fdl,
\end{eqnarray*}
where for the last inequality we use   $\sin \phi \sim \Im A/|A|$ and~\eqref{boundsin}.  
Since $\epsilon$ depends only on $C$  from~\eqref{boundsin}, we use~\eqref{lowermp} to obtain
\begin{equation*}
|B-\Kz|\geq cB\sqrt{\kappa +\eta}.
\end{equation*}
We trivially have $|1-\Kz|\geq |B-\Kz|$ for any positive number $B$ with $B\leq 1$ and complex number $\Kz$ with 
$|\Kz|\leq B$. 
Therefore~\eqref{distKz} follows from~\eqref{distB} and from a bound $B\geq c>0$. 
The bound $B\geq c>0$, follows  from the fact that $|A(\lambda)|\ge c>0$ on
the support of $\finv$ and 
  that $\finv$ is a probability measure 
whose support is separated from zero. 
\end{proof}

\begin{proof}{\bf of Lemma~\ref{stability}}
Throughout the proof, $\delta$ is a function of $z$, but we will omit writing this dependence. 
We also fix $E=\Re z$ and we vary only $\eta = \Im z$. 
Note that from definition~\eqref{under} we have 
\begin{equation}
\mP(z)-\md(z)=a(\munp(z)-\mud(z)), \label{difforiginal}
\end{equation}
so that~\eqref{claimb} follows from~\eqref{claima}.

 To prove~\eqref{claima} we observe that 
\begin{equation}\label{selfcons}
\munp(z)-\mud(z)=(\munp(z)-\mud(z))K(z)- \frac{z}{a}\left(z-\oa\int \frac{\lambda}{1+\lambda\mud(z)}\fdl\right)^{-1}   \delta,
\end{equation}
where 
\begin{equation}
 K(z):=\frac{ \oa\int \hdl\hpl\finv(\lambda) d\lambda}
{ \left( z-\oa\int \hdl \finv(\lambda)d\lambda\right)\left( z-\oa\int \hpl\finv(\lambda) d\lambda\right)}\label{defKz}
\end{equation}
is obtained by taking the difference of the expressions for $\munp(z)$ and $\mud(z)$ given in~\eqref{pert2} 
and~\eqref{eqmp}.  
Since $|\mud(z)|\leq C$ and, for $\delta$ small enough, $|z\delta|\leq \frac{a}{2}$, using~\eqref{pert2} we have 
$$
\left|\left(z-\oa\int \frac{\lambda}{1+\lambda\mud(z)}\fdl\right)^{-1}  \right|\leq C,
$$ 
and therefore \eqref{selfcons} yields 
\begin{eqnarray}
 |\munp(z)-\mud(z)|= \frac{C|\delta|}{|1-K(z)|}.\label{diffK}
\end{eqnarray}
{F}rom~\eqref{pert2} and~\eqref{defKz} we have
\begin{equation}
K(z)=\oa\int \frac{\lambda\munp(z)}{1+\lambda\munp(z)}\frac{\lambda\mud(z)}{1+\lambda\mud(z)}\fdl
-\frac{z\delta}{z\delta-a}\int \frac{\lambda\munp(z)}{1+\lambda\munp(z)}\frac{\lambda\mud(z)}{1+\lambda\mud(z)}\fdl.\label{Kz2}
\end{equation}
Since $|z\delta|\leq \frac{a}{2}$, $|\delta|\le\delta_1$,
 the absolute value of the second term in~\eqref{Kz2} is bounded by $C\delta_1$. 
Here we used \eqref{uniflow} and  that a similar positive lower bound holds for $|1+ \lambda\mud(z)|$ 
as well, assuming that $c_1$ in \eqref{constdistance} is sufficiently small. 
Thus,
\begin{eqnarray*}
|K(z)-\Kz(z)|
& \leq &\left|\oa\int \frac{\lambda\munp(z)}{1+\lambda\munp(z)}
\left(\frac{\lambda\mud(z)}{1+\lambda \mud(z)}- \frac{\lambda\munp(z)}{1+\lambda \munp(z)}\right) \fdl   \right|+C\delta_1 \\
&\le&  \oa\int \left| \frac{\lambda\munp(z)}{1+\lambda\munp(z)}
  \frac{\lambda(\mud(z)-\munp(z))}{(1+\lambda\munp(z))(1+\lambda\mud(z))} \right| \fdl+C\delta_1\\
&\leq& C |\mud(z)-\munp(z)|+C\delta_1.
\end{eqnarray*}
By~\eqref{constdistance} and $\delta_1\le c_1\sqrt{\kappa}$ we now have 
\begin{equation}
|K(z)-\Kz(z)|\leq  Cc_1\sqrt{\kappa+\eta}
\end{equation}
for all $\eta\in [\eta',1]$.

Choosing $c_1$ so small so that $Cc_1\le \frac{1}{2}c$ where $c$ is  the constant 
obtained in the estimate \eqref{distKz} in Lemma~\ref{Kzero},
we have
\begin{eqnarray}
\frac{1}{|1-K(z)|}&\leq & \frac{C}{\sqrt{\kappa+\eta}}.\label{boundfrac}
\end{eqnarray}
Using~\eqref{diffK} we  have
\begin{eqnarray}
|\munp(z)-\mud(z)|\leq  \frac{ C \delta_1}{\sqrt{\kappa+\eta}}\label{closestart}
\end{eqnarray}
for all $\eta\in[\eta',1]$ and with a sufficiently large constant $C$.
\end{proof}

\begin{lemma}\label{discrete_int}
 Let $\mu$ be a probability measure supported on some interval 
$[u,v]\subset \R$ and let $\{\tau_k\}_{k=1}^n$ be real numbers such that 
\begin{equation*}
 \int_{u}^{\tau_k}d\mu(\tau)=\frac{k}{n}
\end{equation*}
for $k=1,\ldots,n$ 
and assume $m\in \C^+$.
Then
\begin{equation*}
 \left|\on\sum_{k=1}^n \frac{1}{1+m\tau_k}-\int \frac{1}{1+m\tau}d\mu(\tau)\right|
\leq \frac{1}{n}\frac{|v-u|\cdot|m|}{\inf_{t\in [u,v]}|1+m t|^2}.
\end{equation*}
If $\{t_k\}_{k=1}^n$ is another set of points in $\R$, then
\begin{equation*}
 \left|\on\sum_{k=1}^n\frac{1}{1+mt_k}-\on\sum_{k=1}^n\frac{1}{1+m\tau_k}\right|
\leq \frac{1}{n}\frac{|m|}{\inf_{t\in[u,v]}|1+mt|^2 }\sum_{k=1}^n|t_k-\tau_k|.
\end{equation*}

\end{lemma}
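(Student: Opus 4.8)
The plan is to treat both inequalities as quadrature (midpoint/Riemann-sum) error estimates, exploiting the fact that the points $\tau_k$ are exactly the quantiles of $\mu$. For the first bound I would partition $[u,v]$ into the $n$ consecutive intervals $I_k = (\tau_{k-1}, \tau_k]$ (with $\tau_0 := u$), so that $\mu(I_k) = 1/n$ for each $k$. Then
\[
\int \frac{1}{1+m\tau}\, d\mu(\tau) - \on\sum_{k=1}^n \frac{1}{1+m\tau_k}
= \sum_{k=1}^n \int_{I_k}\left(\frac{1}{1+m\tau} - \frac{1}{1+m\tau_k}\right) d\mu(\tau).
\]
On each $I_k$ the integrand is a difference of the function $g(\tau) := (1+m\tau)^{-1}$ at two points of $[u,v]$, and $|g'(\tau)| = |m|/|1+m\tau|^2 \le |m|/\inf_{t\in[u,v]}|1+mt|^2 =: L$. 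Hence the integrand is bounded in absolute value by $L\,|\tau - \tau_k| \le L\,|I_k|$, and since $\mu(I_k)=1/n$ each summand is at most $L\,|I_k|/n$. Summing over $k$ and using $\sum_k |I_k| = |v-u|$ gives the claimed bound $\tfrac{1}{n}\,|v-u|\,|m|\,/\inf_{t\in[u,v]}|1+mt|^2$.

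For the second inequality I would simply subtract termwise:
\[
\on\sum_{k=1}^n\frac{1}{1+mt_k} - \on\sum_{k=1}^n\frac{1}{1+m\tau_k}
= \on\sum_{k=1}^n\bigl(g(t_k) - g(\tau_k)\bigr),
\]
and bound $|g(t_k) - g(\tau_k)| \le L\,|t_k - \tau_k|$ by the same derivative bound on $g$ (implicitly this uses that the relevant points lie in a region where $|1+m\tau|$ is controlled; one takes the infimum over $[u,v]$ as stated, or enlarges the interval to contain the $t_k$ — in the application the $t_k$ are the eigenvalues of $T$, which by $\To$ lie in $[(1-c)\am,(1+c)\aM]$, and the infimum bound \eqref{uniflow} survives on this slightly larger interval). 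Dividing by $n$ and summing gives $\tfrac{1}{n}\,L\,\sum_{k=1}^n|t_k-\tau_k|$.

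There is essentially no obstacle here; both estimates are elementary consequences of the mean value inequality for $g(\tau)=(1+m\tau)^{-1}$ together with the defining quantile property of the $\tau_k$. The only mild subtlety worth stating explicitly is the monotonicity of $\tau \mapsto \int_u^\tau d\mu$, which guarantees that $\tau_1 \le \tau_2 \le \cdots \le \tau_n$ and that the intervals $I_k$ indeed tile $[u,v]$ with equal $\mu$-mass; if $\mu$ has atoms or flat stretches one may need to interpret the $\tau_k$ as left-continuous quantiles, but this does not affect the bound. I would present the first inequality in full as above and note that the second is the identical one-line argument without the integral.
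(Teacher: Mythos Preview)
Your proposal is correct and is essentially the same argument as the paper's, just with the details of the quadrature estimate spelled out: the paper states the general bound $\bigl|\tfrac{1}{n}\sum_k f(\tau_k)-\int f\,d\mu\bigr|\le \tfrac{|v-u|}{n}\sup|f'|$ and then plugs in the derivative bound for $f(t)=(1+mt)^{-1}$, while you unpack that quadrature bound via the partition into the quantile intervals $I_k$. Your remark about the $t_k$ possibly falling outside $[u,v]$ is a fair caveat about the lemma as stated, but the paper does not address it either and, as you note, in the application the event $\To$ handles it.
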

\begin{proof}
For an arbitrary differentiable $f$ we have
\begin{eqnarray*}
\left|\on\sum_{k=1}^nf(\tau_k)-\int f(\tau)d\mu(\tau)\right|&
\le &\frac{|b-a|}{n}\sup_{t\in[u,v]}|f'(t)|.
\end{eqnarray*}
Here we have
\begin{eqnarray}
 \sup_{t\in[u,v]}\left|\frac{d}{dt}\; \frac{1}{1+mt}\right|&
\leq & \frac{|m|}{\inf_{t\in [u,v]}|1+tm|^2}.\label{real}
\end{eqnarray}
This proves the first claim. 
The second claim is proven similarly.  
\end{proof}

\subsection{A bound on $|\dj|$}\label{dj}

For any  $z\in \G$ we set
\begin{equation}
M(z):=\max\left( \|(\mnu(z)T+I)^{-1}\|,\max_{1\leq j\leq  N   }\|  (\mju(z)T+I)^{-1}\|\right).\label{defM}
\end{equation}

\begin{lemma}\label{bounddj}  Suppose that for some $z\in \G$ we have $M(z)\le C_0$ with some constant $C_0$.
Then there exists a  constant $C$  such that 
\begin{equation*}
\P\Big(|\dj(z)|> \frac{M(\log n)^{C}}{\sqrt{n\eta}} \big|\B,\To,\Tt,\X\Big)\leq Cn^{-\log\log n}
\end{equation*}
for $j=1,..., N  $ whenever $n\eta \ge M^2$.  
\end{lemma}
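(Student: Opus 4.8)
The plan is to bound each of the four terms \eqref{termA}--\eqref{termD} separately, conditioning throughout on the typical events $\X\cap\To\cap\Tt\cap\B$, and to show each is at most $M(\log n)^C/\sqrt{n\eta}$ with the exceptional probability $Cn^{-\log\log n}$. The common theme is that, after conditioning, $\Bj$ is independent of the column $\xj$, so $\rj=\frac{1}{\sqrt N}T^{1/2}\xj$ is a random vector with independent subexponential entries that is independent of all the matrices appearing between the $\rjs$ and $\rj$. This is exactly the situation for a large-deviation bound on quadratic forms (Lemma B.2 of \cite{EYY11}): $\rjs G \rj$ concentrates around $\frac{1}{N}\tr(TG)$ with fluctuation of order $(\log n)^C$ times $\frac{1}{N}(\tr |TG|^2)^{1/2}$, up to probability $n^{-\log\log n}$.

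First I would handle \eqref{termB}, which is the genuinely stochastic term: here $G=(\Bj-zI)^{-1}(\mju(z)T+I)^{-1}$ is independent of $\xj$ (since $\mju$ depends only on $\Bj$), so the large-deviation bound gives $|\eqref{termB}|\lesssim (\log n)^C\,\frac{1}{n}\big(\tr |T G|^2\big)^{1/2}$. Using $\To$ to bound $\|T\|$ and $\|(\mju T+I)^{-1}\|\le M$ (from \eqref{defM}), together with the event $\B$, namely $\on\sum_k|\mu_k^{(j)}-z|^{-2}\le K/\eta$, gives $\frac1n(\tr|TG|^2)^{1/2}\lesssim M/\sqrt{n\eta}$, which is the claimed bound. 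For the three "deterministic" terms \eqref{termA}, \eqref{termC}, \eqref{termD} the strategy is a resolvent/perturbation expansion: each is a difference where only one factor changes. For \eqref{termA} and \eqref{termC} the change is $(\mnu T+I)^{-1}-(\mju T+I)^{-1}=(\mnu T+I)^{-1}T(\mju-\mnu)(\mju T+I)^{-1}$, so the term is bounded by $\|T\|\,M^2\,|\mnu(z)-\mju(z)|$ times either a quadratic form in $\rj$ (controlled again via $\B$ and $\X$: $\rjs(\Bj-zI)^{-1}\rj$ is $O((\log n)^C)$ in modulus) or $\frac1{n}\tr|(\Bj-zI)^{-1}T|\le K(\log n)^2\|T\|$; crucially $|\mn(z)-\mj(z)|\le \frac{C}{n\eta}$ by the standard rank-one interlacing bound on Stieltjes transforms, hence $|\mnu-\mju|=\oa|\mn-\mj|\lesssim 1/(n\eta)$. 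For \eqref{termD} the change is $(\Bj-zI)^{-1}-(\Bn-zI)^{-1}=(\Bn-zI)^{-1}\rj\rjs(\Bj-zI)^{-1}$ (again rank one), and taking the trace against $(\mnu T+I)^{-1}T$ produces a term bounded by $\frac{1}{an}M\|T\|\,|\rjs(\Bj-zI)^{-1}(\Bn-zI)^{-1}\rj|\lesssim \frac{M(\log n)^C}{n\eta}$ using $\B$ and $\X$. Since $n\eta\ge M^2\ge 1$, all three of these are $O(M(\log n)^C/(n\eta))\le O(M(\log n)^C/\sqrt{n\eta})$, dominated by the bound on \eqref{termB}.

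The main obstacle I anticipate is not any single estimate but making the conditioning rigorous and keeping track of which quantities are genuinely independent of $\xj$: one must be careful that $\mnu$ and $M(z)$ involve the full matrix $\Bn$ (hence $\xj$), whereas the large-deviation lemma requires the "kernel" to be $\xj$-independent — this is why the decomposition into \eqref{termA}--\eqref{termD} is organized so that the stochastic step \eqref{termB} uses only $\mju$ and $\Bj$, and the passage from $\mju$ to $\mnu$ (and from $\Bj$ to $\Bn$) is done deterministically afterward via rank-one resolvent identities. A secondary technical point is verifying the quadratic-form bounds $|\rjs(\Bj-zI)^{-1}\rj|\le C(\log n)^C$ and $|\rjs(\Bj-zI)^{-1}(\Bn-zI)^{-1}\rj|\le C(\log n)^C/\eta$ on the event $\B\cap\X$: the first follows from $|\rjs(\Bj-zI)^{-1}\rj - \frac1N\tr(T(\Bj-zI)^{-1})|$ being small by the large deviation bound plus $\frac1n|\tr T(\Bj-zI)^{-1}|\le \|T\|\cdot\on\sum_k|\mu_k^{(j)}-z|^{-1}\le C(\log n)^2$, and similarly for the second using the $1/\eta$ bound in $\B$; I would also need $n\eta\ge M^2$ to absorb the $M^2/(n\eta)$ factor coming from the expansion in \eqref{termA} and \eqref{termC}. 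Assembling the four bounds and taking a union bound over $j$ — already absorbed in the $n^{-\log\log n}$ probability since we are at fixed $z$ — completes the proof.
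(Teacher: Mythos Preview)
Your proposal is correct and follows essentially the same route as the paper: bound \eqref{termB} by the large-deviation Lemma~B.2 of \cite{EYY11} with the Hilbert--Schmidt norm controlled via the event $\B$, and handle \eqref{termA}, \eqref{termC}, \eqref{termD} perturbatively using $|\mnu-\mju|\le C/(n\eta)$ (Lemma~\ref{diffm}) together with the rank-one resolvent identity. The only minor difference is in \eqref{termD}: rather than bounding the quadratic form $\rjs(\Bj-zI)^{-1}(\Bn-zI)^{-1}\rj$, the paper simply notes that the matrix in \eqref{fortermD} has rank one so its trace is bounded by its operator norm, and then uses the crude bound $\|(\Bj-zI)^{-1}\|,\|(\Bn-zI)^{-1}\|\le 1/\eta$ to get $|\eqref{termD}|\le CM/(n\eta)$ directly.
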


We start with a short lemma.
\begin{lemma}\label{diffm}
For $\mn$ and $\mj$ as defined in~\eqref{mnmj} and $z=E+i\eta$, 
\begin{equation*}
|\mn(z)-\mj(z)|\leq \frac{\pi}{\eta n}
\end{equation*}
for all $n\in \mathbb{N}$ and all $E\in\R$. 
\end{lemma}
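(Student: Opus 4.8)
The statement to prove is that $|m_n(z)-m_{(j)}(z)|\le \pi/(\eta n)$, where $m_n$ is the Stieltjes transform of the $n\times n$ matrix $B_n = \frac1N T^{1/2}XX^*T^{1/2}$ (which has $n$ eigenvalues) and $m_{(j)}$ is the Stieltjes transform of $B_{(j)}$, obtained by deleting one rank-one term $r_jr_j^*$ from $B_n$. The key structural fact is that $B_{(j)} = B_n - r_jr_j^*$ is a rank-one (negative semidefinite) perturbation of $B_n$, so the eigenvalues interlace: if $\mu_1\le\dots\le\mu_n$ are the eigenvalues of $B_n$ and $\mu^{(j)}_1\le\dots\le\mu^{(j)}_{n-1}$ those of $B_{(j)}$, then $\mu_k \le \mu^{(j)}_k \le \mu_{k+1}$ for $k=1,\dots,n-1$. (Here I am treating $B_{(j)}$ as an $n\times n$ matrix with one extra zero eigenvalue, or equivalently working with the $(n-1)$ genuine eigenvalues; the interlacing argument is the same and the bookkeeping of the constant is what I would check carefully.)

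The cleanest route I would take is via the Cauchy (Stieltjes) transform written as an integral against the empirical distribution functions. Let $F_n$ and $F_{(j)}$ be the empirical CDFs of $B_n$ and $B_{(j)}$ (each a step function, one with $n$ jumps of size $1/n$... here I must be slightly careful because $m_{(j)}$ is normalized by $1/n$ in \eqref{mnmj}, not $1/(n-1)$, which is exactly why the simple interlacing bound works so cleanly). Integration by parts gives $m_n(z) = \int \frac{dF_n(t)}{t-z} = \int \frac{F_n(t)}{(t-z)^2}\,dt$ and similarly for $m_{(j)}$ (with $1/n$ normalization), so
\begin{equation*}
|m_n(z)-m_{(j)}(z)| \le \int_{\R} \frac{|F_n(t)-F_{(j)}(t)|}{|t-z|^2}\,dt.
\end{equation*}
Interlacing forces $\sup_t |F_n(t)-F_{(j)}(t)| \le 1/n$, so the bound becomes $\frac1n\int_\R \frac{dt}{|t-z|^2} = \frac1n\int_\R \frac{dt}{(t-E)^2+\eta^2} = \frac{\pi}{n\eta}$, which is exactly the claim. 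I would spell out the integration-by-parts step (valid since $F_n(t)-F_{(j)}(t)$ is bounded and compactly supported, and $1/(t-z)^2$ is integrable on $\R$ for $\eta>0$) and the interlacing inequality, and then the computation of $\int_\R dt/((t-E)^2+\eta^2)=\pi/\eta$ is immediate.

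The only genuine subtlety — and the one place I would be careful — is matching the normalizations: $m_n$ carries a factor $\frac1n$ over $n$ eigenvalues while $m_{(j)}$ (as defined through $\Bj$, with the $\frac1n$ prefactor in the trace) carries $\frac1n$ over only $n-1$ eigenvalues. Writing $n\,m_n(z) = \sum_{k=1}^n \frac1{\mu_k-z}$ and $n\,m_{(j)}(z) = \sum_{k=1}^{n-1}\frac1{\mu^{(j)}_k-z}$, one can instead argue directly: pair $\mu^{(j)}_k$ with $\mu_{k+1}$ (say), use $|\frac1{\mu^{(j)}_k-z}-\frac1{\mu_{k+1}-z}| = \frac{|\mu_{k+1}-\mu^{(j)}_k|}{|\mu^{(j)}_k-z||\mu_{k+1}-z|}$ together with interlacing $\mu^{(j)}_k\le \mu_{k+1}\le \mu^{(j)}_{k+1}$ to telescope the gaps, and absorb the leftover single term $\frac1{\mu_1-z}$. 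This gives the same bound but requires a marginally more careful telescoping. Either way, the mechanism is rank-one interlacing plus the elementary integral $\pi/\eta$; there is no probabilistic input and no real obstacle beyond the normalization bookkeeping.
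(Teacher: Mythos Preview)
Your proposal is correct and follows essentially the same approach as the paper: the paper bounds $\sup_t|F^n(t)-F^{(j)}(t)|\le 1/n$ (citing the rank-one perturbation result, Theorem~A.44 of \cite{BS10}, which is precisely the interlacing fact you spell out) and then uses the same integration-by-parts estimate against $\int_\R |t-z|^{-2}\,dt=\pi/\eta$. Your treatment of the $1/n$ versus $1/(n-1)$ normalization is more explicit than the paper's, but the argument is the same.
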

\begin{proof}
Using $F^n$ and $F^{(j)}$ to denote the corresponding distribution functions, 
by Theorem~A.44 of~\cite{BS10},
\begin{equation*}
\sup_{t\in\R}|F^n(t)-F^{(j)}(t)|\leq \on.
\end{equation*}
Then
\begin{equation*}
|\mn(z)-\mj(z)|\leq\on\int\frac{dx}{|x-z|^2}=\frac{\pi}{\eta n}.
\end{equation*}

\end{proof}

\begin{proof}{\bf of Lemma~\ref{bounddj}}
We need to bound the terms~\eqref{termA}-\eqref{termD} and will use 
Lemmas~\ref{boundsB} and~\ref{Kzero} repeatedly. 
For~\eqref{termA} we use Lemma~\ref{diffm} and obtain
\begin{eqnarray*}
\on |\E \tr \Bjzi(\mnu(z)T+I)^{-1}(\mnu(z)-\mju(z))T(\mnu(z)T+I)^{-1} |
&\leq & \frac{CM^2}{n\eta}\log^2 n.
\end{eqnarray*}
Similarly, 
\begin{eqnarray*}
&&\hspace{-1in}\on \|\Bjzi(\mnu(z)T+I)^{-1}(\mnu(z)-\mju(z))T(\mnu(z)T+I)^{-1} \|_{HS}\\
&\leq & \frac{CM^2}{n\eta}\on(\tr |\Bj-zI|^{-2})^{\oh}\\
&\leq&\frac{CM^2}{(n\eta)^{3/2}},
\end{eqnarray*}
where the last inequality follows from~\eqref{boundHS}. 
Using Lemma B.2 of~\cite{EYY11}, 
\begin{equation*}
\P\left(|\eqref{termA}|\geq  \frac{M(\log n)^{C}}{\sqrt{n\eta}} \right)\leq n^{-\log\log n}. \label{secondprob}
\end{equation*}
For~\eqref{termB} we need to bound the absolute value of 
\begin{equation}
\frac{b}{N}\sum_{k,l=1}^{N} \Big[T^{\oh}(\Bj-zI)^{-1}(\mju(z)T+I)^{-1}T^{\oh}
 \Big]_{k,l}(b_k\bo_l-\E b_k\bo_l),\label{randomquadform}
\end{equation}
where $b_k$ denotes the $k^{th}$ entry of the vector $x_j$.   On the set $\B$, 
the second inequality in~\eqref{eqB}  gives the following bound on the Hilbert-Schmidt norm
\begin{eqnarray*}
\on\| \Th(\Bj-zI)^{-1}(\mju(z)T+I)^{-1}\Th\|_{HS}
&\leq& \frac{C}{n} \|(\mju(z)T+I)^{-1}\|\| (\Bj-zI)^{-1}\|_{HS}\\
&\leq &\frac{CM}{n}  \|(\Bj-zI)^{-1}\|_{HS}\\
&\leq& \frac{CM}{\sqrt{n\eta}} .\label{refHS}
\end{eqnarray*}
Thus, 
by Lemma~B.2 of~\cite{EYY11}, 
\begin{equation}
\P\Big(|\eqref{termB}|>  M \frac{(\log n)^{C}}{\sqrt{n\eta}}\Big)\leq Cn^{-\log\log n}.\label{boundprobquad}
\end{equation}

For~\eqref{termC}, using Lemma~\ref{diffm},~\eqref{boundtrace} and the resolvent identity,  we have 
\begin{eqnarray*}
|\eqref{termC}|
&\leq & \frac{C}{n}\| (\mnu(z)T+I)^{-1}-(\mju(z)T+I)^{-1} \|\; \tr |(\Bj-zI)^{-1}|\\
&\leq & \frac{CM^2}{ n}   \frac{1}{n\eta}\tr |(\Bj-zI)^{-1}|\\
&\leq &  \frac{CM^2\log^2 n}{n\eta }.
\end{eqnarray*}

For~\eqref{termD}, using the resolvent identity we see that 
\begin{equation}
\big((\Bj-zI)^{-1} -(\Bn-zI)^{-1}\big)(\mnu(z)T +I)^{-1}T =\Bjzi \rj\rjs\Bnzi(\mnu(z)T+I)^{-1}T \label{fortermD}
\end{equation}
has rank one, so that a bound on the norm of~\eqref{fortermD} gives a bound on its trace. 
We then return to the expression on the left of~\eqref{fortermD} and use 
$\|\Bjzi\|,\|\Bnzi\|\leq \eta^{-1}$ to obtain
\begin{equation*}
|\eqref{termD}|\leq \on\|((\Bj-zI)^{-1} -(\Bn-zI)^{-1})(\mnu(z)T +I)^{-1}T  \|
\leq  \frac{CM}{n\eta}.
\end{equation*}
Combining these four bounds just obtained 
gives  the statement of the lemma.  
\end{proof}

\subsection{Proof of Theorem~\ref{thmprod}}\label{proofprod}

We isolate   three parts of the proof of Theorem~\ref{thmprod} in the following lemmas.

\begin{lemma}\label{bounddenom}
Let $z=E+i\eta$ with $E\in (\mlow,\mhigh) $ and
 $\eta>n^{-1}\kappa^{-2}(\log n)^{4C_T\log\log n}$, and assume that $\To$ and $\X$ hold. 
 Then with probability at least $1-n^{-c\log\log n}$, 
\begin{equation}
\min_{1\leq j\leq  N  }|1+\rjs\Bjzi\rj|\geq \frac{1}{4\mhigh|\mn(z)|}.\label{eqbounddenom}
\end{equation}
\end{lemma}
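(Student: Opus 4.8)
\textbf{Proof plan for Lemma~\ref{bounddenom}.}
The plan is to exploit the key identity~\eqref{ident1}, which gives
$\rjs(\Bn-zI)^{-1}\rj = \rjs(\Bj-zI)^{-1}\rj / (1+\rjs(\Bj-zI)^{-1}\rj)$,
so that $1+\rjs\Bjzi\rj = (1-\rjs\Bnzi\rj)^{-1}$, and therefore it suffices to bound $|1-\rjs\Bnzi\rj|$ from above by $4\mhigh|\mn(z)|$ with high probability. First I would write $\rjs\Bnzi\rj = \frac1N x_j^* T^{1/2}\Bnzi T^{1/2}x_j$ and use a large-deviation bound (Lemma~B.2 of~\cite{EYY11}, as invoked throughout the paper) to replace this quadratic form by its conditional expectation given $\Xj$ (equivalently given $\Bj$, since $x_j$ is independent of $\Xj$), up to an error controlled by the Hilbert--Schmidt norm $\frac1N\|T^{1/2}\Bnzi T^{1/2}\|_{HS}$. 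The conditional expectation is $\frac1N\tr\big(T^{1/2}\Bnzi T^{1/2}\big) = \frac1N\tr(T\Bnzi)$ up to the rank-one difference between $\Bn$ and $\Bj$, which by the resolvent identity and $\|\Bjzi\|,\|\Bnzi\|\le\eta^{-1}$ differs from $\frac1N\tr(T\Bjzi)$ by $O((N\eta)^{-1})\cdot\|T\|$, negligible under the hypothesis on $\eta$.

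Next I would relate $\frac1N\tr(T\Bnzi)$ to $\mn(z)$: using $\Bn = \frac1N T^{1/2}XX^*T^{1/2}$ and the identity $\frac1N\tr(T\Bnzi) = \frac1N\tr\big(T^{1/2}\Bnzi T^{1/2}\big)$, write $T^{1/2}\Bnzi T^{1/2} = \frac{1}{z}\big(T^{1/2}\Bnzi T^{1/2}\cdot z\big)$ and observe $zT^{1/2}\Bnzi T^{1/2} = T^{1/2}(\Bn(\Bn-zI)^{-1}-I)T^{1/2}$; alternatively and more cleanly, since $\Bn$ is (up to scaling) $\frac1n T^{1/2}XX^*T^{1/2}$, the quantity $\frac1N\tr(T\Bnzi)$ is directly comparable to $\frac1N\tr\Bnzi = \frac{n}{N}\mn(z) = \oa\mn(z)$ after accounting for the $T$ weight via the bound $\am\le T\le\aM$ on the event $\To$, giving $|\rjs\Bnzi\rj|\ge c/|z|$-type lower bounds but, more to the point, the needed \emph{upper} bound $|1-\rjs\Bnzi\rj|\le 1 + |\rjs\Bnzi\rj| \le C\cdot\frac1N\tr|T\Bnzi|$. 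Here one uses $\frac1N\tr|T\Bnzi| \le \aM\cdot\frac1N\tr|\Bnzi| = \aM\cdot\frac1N\sum_k|\mu_k^{(n)}-z|^{-1}$, and this trace is comparable to $|\mn(z)|^{-1}\cdot\Im(z\mn(z))$-type expressions; the cleanest route is to note $1-\rjs\Bnzi\rj$ is exactly the reciprocal of $1+\rjs\Bjzi\rj$ and to bound the latter below by observing, from~\eqref{keyident2}, that $\mnu(z) = -\frac1{z}\frac1N\sum_j (1+\rjs\Bjzi\rj)^{-1}$, so that if $|1+\rjs\Bjzi\rj|$ were small for some $j$, its reciprocal would be large, forcing $|\mnu(z)|$ to be large unless cancellations occur — but $|\mnu(z)| = |\oa\mn(z) - \frac1z(1-\oa)|$ is controlled by $|\mn(z)|$ and $|z|\ge\mlow$. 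The factor $4\mhigh$ in the target arises precisely from tracking $|z|\le\mhigh+o(1)$ and the constant $\oa<1$.

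The main obstacle I expect is the probabilistic replacement step: controlling $\frac1N\|T^{1/2}\Bnzi T^{1/2}\|_{HS}^2 = \frac1{N^2}\tr(T\Bnzi T\Bnzi^*)$, which is bounded by $\frac{\aM^2}{N^2}\tr(\Bnzi\Bnzi^*) = \frac{\aM^2}{N^2}\sum_k|\mu_k^{(n)}-z|^{-2}$, and this is of order $(N\eta)^{-1}$ only after invoking the event $\B$ (or rather its analogue for $\Bn$ in place of $\Bj$, which follows from Lemma~\ref{boundsB} together with Lemma~\ref{diffm} comparing $\mn$ and $\mj$). Since the hypothesis $\eta > n^{-1}\kappa^{-2}(\log n)^{4C_T\log\log n}$ is much stronger than the threshold needed for $\B$ to hold, the fluctuation term $(\log n)^C/\sqrt{N\eta}$ is $o(\kappa)$ and in particular $o(|\mn(z)|^{-1})$ after using the lower bound on $|\mn(z)|$ available in the bulk (from Lemma~\ref{Kzero} transferred to $\mn$, or directly from $\Im\mn(z)\ge c\sqrt{\kappa+\eta}$ type estimates that one establishes en route); absorbing this into the $4\mhigh|\mn(z)|$ bound with room to spare then closes the argument, and the union bound over $1\le j\le N$ costs only a polynomial factor absorbed by $n^{-c\log\log n}$.
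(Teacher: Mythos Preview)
Your proposal has a genuine gap in the main line of argument. The Sherman--Morrison identity $1+\rjs\Bjzi\rj=(1-\rjs\Bnzi\rj)^{-1}$ is correct but is only a reformulation: bounding $|1-\rjs\Bnzi\rj|$ above is exactly the original problem. The large-deviation step you then propose is invalid, because $\Bn=\Bj+\rj\rjs$ depends on $x_j$, so $x_j$ is \emph{not} independent of the matrix $T^{1/2}\Bnzi T^{1/2}$ and Lemma~B.2 of~\cite{EYY11} cannot be applied to the quadratic form $\frac{1}{N}x_j^*T^{1/2}\Bnzi T^{1/2}x_j$. The ``rank-one difference'' does not rescue this: while $\frac{1}{N}\tr T\Bnzi$ and $\frac{1}{N}\tr T\Bjzi$ differ by $O((N\eta)^{-1})$, the quadratic forms themselves satisfy $\rjs\Bnzi\rj-\rjs\Bjzi\rj=-\rjs\Bjzi\rj\cdot\rjs\Bnzi\rj$, which is of order one. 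If instead you apply large deviation to $\rjs\Bjzi\rj$ (where independence \emph{does} hold) you learn $\rjs\Bjzi\rj\approx\frac{1}{n}\tr T\Bnzi$, but turning this into a lower bound on $|1+\rjs\Bjzi\rj|$ still requires a lower bound on $|1+\frac{1}{n}\tr T\Bnzi|$, which you have no direct route to; your fallback crude bounds via $1+|\rjs\Bnzi\rj|\le C\cdot\frac{1}{N}\tr|T\Bnzi|$ yield at best $O((\log n)^2)$ from~\eqref{boundtrace}, far too large compared with the target $4\mhigh|\mn(z)|=O(1)$.

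The paper's argument is precisely your ``cleanest route'', and the idea you are missing is how to rule out the cancellations in~\eqref{keyident2}. One first shows, via large deviation applied to $\rjs\Bjzi\rj$ together with a rank-one trace comparison, that with high probability \emph{all} the numbers $1+\rjs\Bjzi\rj$ lie within a small constant $\epsilon$ of the common value $1+\frac{1}{n}\tr T\Bnzi$; these are the estimates~\eqref{boundA} and~\eqref{boundB} on the numerators of $A$ and $B$. Once the $1+\rjs\Bjzi\rj$ are clustered, their reciprocals in~\eqref{keyident2} all point in nearly the same direction, so no cancellation can occur; the identity~\eqref{tracedenom} then rearranges to give $\bigl|(1+\frac{1}{n}\tr T\Bnzi)^{-1}\bigr|\le 2\mhigh|\mn(z)|$, and transferring from the common value back to each individual $1+\rjs\Bjzi\rj$ via the same numerator bounds produces the factor $4\mhigh$.
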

The proof of Lemma~\ref{bounddenom} is given following the proof of 
Theorem~\ref{thmprod}. 

\begin{lemma}\label{start}
Assume $E\in (\mlow,\mhigh)$ and $ \left(\lp-\frac{1}{E+1}\right)\left(\frac{1}{E+1}-\lm\right)\geq \kappa  $ 
and that $\To$ and $\X$ hold. 
Then for any  sufficiently small constant $c_1>0$ 
there exists  $c>0$ depending only on $c_1$ 
such that with probability at least $1-n^{-c\log\log n}$
\begin{equation}\label{gg1}
|\mP(E+i)-\mn(E+i)| \le  \frac{ (\log n)^{2C_T \log\log n}}{ \sqrt{n}}.
\end{equation}
In particular, for  any  sufficiently small constant $c_1>0$ 
\begin{equation}\label{gg2}
|\mP(E+i)-\mn(E+i)| \le c_1\sqrt{\kappa+1}.
\end{equation}
Moreover, we also have
\begin{equation}
 \min_{1\leq j\leq  N   }   |1+\rjs(\Bj-(E+i))^{-1} \rj | \geq   c_1 
\label{inequalities}
\end{equation}
holds for all $n$ large enough. 
\end{lemma}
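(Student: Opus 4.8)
The plan is to evaluate everything at the fixed spectral parameter $z = E+i$, where the imaginary part $\eta = 1$ is of order one, so that all the error terms in the analysis of Section~\ref{dj} become genuinely small powers of $n$. First I would note that at $\eta=1$ the matrix $B_n$ has resolvent of norm at most $1$, and similarly $\|(\Bj - zI)^{-1}\|\le 1$; combined with the event $\To$, which ensures $T$ is bounded above and below, and the event $\X$, the quantities $\rjs(\Bj-zI)^{-1}\rj$ are controlled deterministically up to the fluctuation captured by the large-deviation bound of Lemma~B.2 of~\cite{EYY11}. Before that, though, I would need the a priori bound $M(E+i)\le C_0$ from~\eqref{defM}: this is exactly where Lemma~\ref{Kzero}, part 3, enters. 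The point is that $\mnu(E+i)$ lies in $\C^+$ and is bounded, so one applies \eqref{mtplusI} with $\munder = \mnu$ (and likewise $\munder = \mju$, using Lemma~\ref{diffm} to compare $\mnu$ and $\mju$), provided one first knows $\mnu$ is within a small constant of $\munp$. That last fact is what~\eqref{gg1} will give, so the argument is a short bootstrap on the single scale $\eta = 1$.

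The core estimate is~\eqref{gg1}, and here I would run the self-consistent equation machinery at $\eta=1$: equation~\eqref{eqdimn} expresses the difference $\frac{1}{n}\tr(-z\mnu T - zI)^{-1} - \mn(z)$ in terms of $\frac{a}{zN}\sum_j (1+\rjs(\Bj-zI)^{-1}\rj)^{-1} d_j(z)$. At $\eta = 1$, Lemma~\ref{bounddj} gives $|d_j(z)| \lesssim (\log n)^C / \sqrt{n}$ once $M(z)\le C_0$; the denominators $1+\rjs(\Bj-zI)^{-1}\rj$ are bounded below by a positive constant (this is essentially~\eqref{inequalities}, proven first, directly from concentration of $\rjs(\Bj-zI)^{-1}\rj$ around $\frac{1}{N}\tr T(\Bj-zI)^{-1}$, whose imaginary part is bounded below since $\eta=1$). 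Also, $\frac{1}{n}\tr(-z\mnu T - zI)^{-1}$ differs from $\int \finv(\lambda)\,d\lambda / (\cdots)$ by $O((\log n)^{C_T\log\log n}/n)$ using the event $\Tt$ together with Lemma~\ref{discrete_int}. Thus $\mn(E+i)$ satisfies the perturbed equation~\eqref{perta} with $\delta = \delta(E+i)$ of size $(\log n)^{2C_T\log\log n}/\sqrt{n}$, hence $\delta_1 \le c_1\sqrt{\kappa}$ for $n$ large (this is where the hypothesis $\eta > n^{-1}\kappa^{-2}(\log n)^{4C_T\log\log n}$ of the ambient setup, and smallness of $c_1$, are invoked). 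But to apply the stability Lemma~\ref{stability} at $\eta = 1$ one must verify the hypothesis~\eqref{constdistance}, $|\mnu(E+i) - \munp(E+i)| \le c_1\sqrt{\kappa+1}$; this I would get by a crude argument valid at $\eta = 1$ — both quantities are bounded, and in fact a preliminary, coarse version of the self-consistent estimate (without needing $M\le C_0$ a priori, since $\eta=1$ makes $\|(\mnu T+I)^{-1}\|$ bounded using $\Im\mnu > 0$ and boundedness of $T$) already places them within a small constant. Then Lemma~\ref{stability} upgrades this to $|\mnu(E+i)-\munp(E+i)| \le C_2\delta_1/\sqrt{\kappa+1}$, and~\eqref{difforiginal} converts it to~\eqref{gg1}; \eqref{gg2} is then immediate since $\delta_1/\sqrt{\kappa} \le c_1$.

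With~\eqref{gg1} in hand, \eqref{gg2} follows trivially, and~\eqref{inequalities} follows because $1 + \rjs(\Bj-zI)^{-1}\rj = -z\,\mju(z)\cdot(\text{something bounded})$ via identity~\eqref{keyident2}-type manipulations, or more directly: on $\To\cap\X$ and off a bad event of probability $n^{-c\log\log n}$, $\rjs(\Bj-zI)^{-1}\rj$ concentrates near $\frac{1}{N}\tr T(\Bj-zI)^{-1}$, whose value has imaginary part $\ge c > 0$ at $\eta = 1$, so $|1 + \rjs(\Bj-zI)^{-1}\rj| \ge c_1$ for $c_1$ small enough.

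\medskip

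The step I expect to be the main obstacle is the self-referential nature of the $M(z)\le C_0$ bound: Lemma~\ref{bounddj} needs $M \le C_0$ as a hypothesis, but establishing $M \le C_0$ via Lemma~\ref{Kzero}.(3) needs $\mnu$ close to $\munp$, which is the very conclusion one is trying to prove. The resolution — and the care needed — is that at the \emph{fixed} scale $\eta = 1$ one can break this circularity cheaply: because $\Im \mnu(E+i) > 0$ and $T$ is bounded away from $0$ and $\infty$ on $\To$, the norm $\|(\mnu(E+i)T + I)^{-1}\|$ is bounded by a universal constant with no input on the location of $\mnu$ at all. That deterministic bound at $\eta=1$ is what makes the single-scale argument self-contained and is the piece whose verification I would be most careful about.
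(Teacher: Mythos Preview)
Your overall strategy matches the paper's: work at the single scale $\eta=1$, establish the approximate self-consistent equation with error $\sim (\log n)^{C\log\log n}/\sqrt{n}$, and invoke Lemma~\ref{stability}. But the step you flag as the obstacle is exactly where your justification breaks down. You claim that ``$\Im\,\mnu(E+i)>0$ and $T$ bounded'' already forces $\|(\mnu(E+i)T+I)^{-1}\|\le C$ with a \emph{universal} constant. That is false: the eigenvalues of $\mnu T+I$ are $1+t_k\mnu$, and $|1+t_k\mnu|\ge t_k\,\Im\mnu$ only yields a useful bound if $\Im\mnu$ is bounded \emph{below} by a fixed positive constant, not merely positive. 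The paper obtains this lower bound deterministically on $\To\cap\X$: those events force the spectrum of $B_n$ (and of each $\Bj$) into a bounded interval, so
\[
\Im\, m_n(E+i)=\frac{1}{n}\sum_{k}\frac{1}{(\mu_k-E)^2+1}\ge c>0,
\]
and hence $\Im\,\mnu(E+i)\ge c$ with no probabilistic input and no circularity.

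The same deterministic observation is how the paper handles the denominator bound~\eqref{inequalities}: on $\To\cap\X$,
\[
\Im\,\rjs(\Bj-(E+i)I)^{-1}\rj=\sum_k\frac{|v_k^*\rj|^2}{(\mu_k^{(j)}-E)^2+1}\ge c\|\rj\|^2\ge c,
\]
again without concentration. Your proposed route via concentration of $\rjs(\Bj-zI)^{-1}\rj$ around $\frac{1}{N}\tr\,T(\Bj-zI)^{-1}$ would also work, but it is unnecessary at $\eta=1$ and loses the clean deterministic nature of the estimate. Once you replace ``$\Im\mnu>0$'' by ``$\Im\mnu\ge c$ on $\To\cap\X$ because the spectrum is bounded,'' the circularity dissolves and the rest of your outline (Lemma~\ref{bounddj}, Lemma~\ref{discrete_int} with $\Tt$, then Lemma~\ref{stability}) goes through exactly as in the paper.
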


\begin{proof}
Let $\{\mu_k\}_{k=1}^{n-1}$ and $\{v_k\}_{k=1}^{n-1}$ denote the eigenvalues and corresponding normalized eigenvectors of $\Bj$. 
In the following we use that the eigenvalues of $\Bj$ and $\Bn$ interlace (see page 82 of \cite{Bha97}). 
Given  that we are on the set $\To\cap \X$ (see ~\eqref{eqX} and~\eqref{eqTone}), we have  
\begin{eqnarray}
\Im \rjs (\Bj -(E+i)I)^{-1}\rj &\geq& \sum_{k=1}^{n-1} \frac{|v_k^* \rj|^2}{(8b\aM+\mhigh)^2+1}
\geq  c\sum_{k=1}^{n-1} |v_k^* \rj|^2 \nonumber \\
&=&c\|\rj\|^2
=c\|Tx\|^2
\geq c\|x\|^2\geq c. \label{initialdenom}
\end{eqnarray}

A similar calculation together with~\eqref{under} implies 
$\Im \mnu(E+i),\Im\mju(E+i)\geq c$, hence,  recalling the definition \eqref{defM}, 
\begin{equation}
M=M(E+i)=\max\left(\|(\mnu(E+i)T+I)^{-1}\|,\max_{1\leq j\leq  N  }\|(\mju(z)T+I)^{-1}\|\right)\leq  C. 
\label{defM1}
\end{equation} 
Recalling the constant $C_T$ from~\eqref{eqTtwo}, we set 
\begin{equation}
s_\eta:=\frac{(\log n)^{\frac{3}{2} C_T\log\log n}}{\sqrt{n\eta}}. 
\label{defs}
\end{equation}
Using $M$ as just  estimated,   
for $n$ large enough the extra factors of $\log n$ in $s_1$ 
will exceed the constant necessary to satisfy the condition of 
Lemma~\ref{bounddj}.  
Thus, by Lemma~\ref{bounddj}  and \eqref{fourset}, 
$$   
|d_j(E+i)|<s_1, \qquad  j=1,\ldots,N,
$$
 with probability at least $1-Cn^{-\log\log n}$. 
Returning to~\eqref{eqdimn} and using the lower bound on the denominators given by~\eqref{initialdenom}, 
\begin{equation}
\left| \frac{a}{(E+i)N}\sum_{j=1}^N \frac{1}{1+ \rjs (\Bj-(E+i)I)^{-1}\rj }\dj(E+i)\right|
\leq C s_1.\label{boundpert}
\end{equation}
Using equation~\eqref{Silv1.2} and recalling that $\{t_1,\ldots,t_n\}$  are the eigenvalues of $T$, 
\begin{eqnarray}
\lefteqn{\left|\mn(E+i)-\int \frac{1}{\lambda(1-\oa -\oa (E+i)\mn(E+i))-(E+i)} \fdl \right|   }\label{firsttobound}\\
&=&\left|\mn(E+i)-\frac{1}{E+i}\int \frac{1}{1+\lambda \mnu(E+i)}\fdl  \right|\nonumber\\
&\leq&\left|\mn(E+i)-\frac{1}{E+i}\on\sum_{k=1}^n \frac{1}{1+t_k \mnu(E+i)} \right|\label{termaa}\\
&&+ \left|\frac{1}{E+i}\on\sum_{k=1}^n \frac{1}{1+t_k \mnu(E+i)}  -\frac{1}{E+i}\int \frac{1}{1+\lambda \mnu(E+i)}\fdl  \right|.\label{termbb}
\end{eqnarray}
For~\eqref{termaa} we use~\eqref{eqdimn} and~\eqref{boundpert} to obtain   
\begin{equation*}
\left|\mn(E+i)-\frac{1}{E+i}\on\sum_{k=1}^n \frac{1}{1+t_k \mnu(E+i)} \right|\leq C s_1.
\end{equation*}
For~\eqref{termbb} we use~\eqref{eqTtwo},  both parts of Lemma~\ref{discrete_int}, 
and the bounds~\eqref{range},  \eqref{defM1}  to obtain 
\begin{eqnarray*}
\lefteqn{\left|\frac{1}{E+i}\on\sum_{k=1}^n \frac{1}{1+t_k \mnu(E+i)}
  -\frac{1}{E+i}\int \frac{1}{1+\lambda \mnu(E+i)}\fdl  \right|}\hspace{2in}\\
&\leq &\frac{C}{n} +\frac{(\log n)^{C_T\log\log n}}{n}\\
&\leq &\frac{(\log n)^{C_T\log\log n}}{n}.
\end{eqnarray*}
Since  $s_1>n^{-1}(\log n)^{C_T \log\log n}$  and $s_1\le c_1\sqrt{\kappa}$ if $n$ is sufficiently large, we have 
\begin{equation}
\eqref{firsttobound}\leq C s_1, \label{Cszero}
\end{equation}
so that by inequality~\eqref{claima} of Lemma~\ref{stability} we have 
\begin{equation}
|\mP(E+i)-\mn(E+i)|\leq C s_1\le  \frac{ (\log n)^{2C_T \log\log n}}{ \sqrt{n}}.\label{ineqone}
\end{equation}
This yields \eqref{gg1} and 
for $n$ large enough we also get \eqref{gg2}.
From~\eqref{ineqone}, \eqref{range}  and~\eqref{eqbounddenom} we have the bound 
\begin{equation*}
\min_{1\leq j\leq  N  }|1+\rjs(\Bj-(E+i)I)^{-1}\rj|\ge \frac{1}{4C\mu_+}
\end{equation*} 
with probability at least $1-n^{-c\log\log n}$. Choosing $c_1$ small enough, this 
 yields~\eqref{inequalities}.  
\end{proof}

\begin{lemma}\label{continuity}  
Assume $E\in (\mlow,\mhigh)$ and $ \left(\lp-\frac{1}{E+1}\right)\left(\frac{1}{E+1}-\lm\right)\geq \kappa  $
 and choose an $\eta' \in[n^{-1}\kappa^{-2}(\log n)^{4C_T\log\log n},1]$, 
where $C_T$ is the constant appearing in~\eqref{eqTtwo}. Set $\eta'':=\eta' - n^{-2}$. 
Assume  that $\To $ and $\X$ hold and that  with a sufficiently small constant $c_1$ 
\begin{equation}
|\mP(E+i\eta)-\mn(E+i\eta )| \leq   c_1 \sqrt{\kappa +\eta}
\label{threeconditions}
\end{equation}
holds
for all $\eta\in[\eta',1]$ with some probability at least $1-P(n)$.
Then with a probability at least $1-P(n)-n^{-c\log\log n}$, we have
\begin{equation}\label{ggo}
|\mP(E+i\eta)-\mn(E+i\eta)|  \le \frac{ (\log n)^{2C_T \log\log n}}{ \sqrt{n\eta\kappa}}
\end{equation}
 for all $\eta\in [\eta'',1]$. In particular, provided $\eta''\geq n^{-1}\kappa^{-2} (\log n)^{4 C_T \log\log n} $, 
 the bound~\eqref{threeconditions} 
holds for all $\eta\in [\eta'',1]$, with a probability  at least $1-P(n)-n^{-c\log\log n}$.
\end{lemma}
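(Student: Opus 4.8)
The idea is a standard continuity (bootstrap) argument down the imaginary axis, propagating the a priori closeness of $\mn$ to $\mP$ from the scale $\eta'$ to the slightly smaller scale $\eta''=\eta'-n^{-2}$. First I would observe that the map $\eta\mapsto m_n(E+i\eta)$ is Lipschitz with constant $O(\eta^{-2}) = O(n^4)$ on the relevant range, since $m_n(z)=\on\sum_k (\lambda_k-z)^{-1}$ and $|\partial_\eta m_n|\le \on\sum_k |\lambda_k-z|^{-2}\le \eta^{-2}$; the same bound holds for $\mP$ by~\eqref{range} and analyticity, so on the step from $\eta'$ down to $\eta''=\eta'-n^{-2}$ the quantity $|\mP-\mn|$ changes by at most $Cn^4\cdot n^{-2}=Cn^2\cdot\eta'^{-2}$... more carefully, $|\mn(E+i\eta'')-\mn(E+i\eta')|\le n^{-2}\sup_{\eta\in[\eta'',\eta']}\eta^{-2}\le n^{-2}(\eta'')^{-2}$, which is negligible compared to the target error because $\eta''\gtrsim n^{-1}\kappa^{-2}(\log n)^{4C_T\log\log n}$. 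Hence the hypothesis~\eqref{threeconditions} on $[\eta',1]$ plus this small perturbation gives $|\mP(E+i\eta)-\mn(E+i\eta)|\le 2c_1\sqrt{\kappa+\eta}$ on the larger interval $[\eta'',1]$, which in particular keeps us inside the regime where Lemma~\ref{stability} applies (with $c_1$ replaced by a slightly larger but still small constant, or with $c_1$ chosen small enough from the start).

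Next, on the interval $[\eta'',1]$ I would re-run the argument of Lemma~\ref{start}: on the event $\X\cap\To\cap\Tt\cap\B$ (which by Proposition~\ref{events} holds with probability $\ge 1-n^{-c\log\log n}$), the a priori bound~\eqref{threeconditions} together with Lemma~\ref{Kzero} gives a lower bound $|1+\rjs\Bjzi\rj|\ge c$ uniformly in $j$ (via Lemma~\ref{bounddenom} applied at each $\eta\in[\eta'',1]$, using $\eta''\ge n^{-1}\kappa^{-2}(\log n)^{4C_T\log\log n}$), and a uniform bound $M(z)\le C_0$ on the quantity in~\eqref{defM} via~\eqref{mtplusI}. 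Then Lemma~\ref{bounddj} bounds $|d_j(E+i\eta)|\le M(\log n)^C(n\eta)^{-1/2}=:s_\eta$ with probability $\ge 1-Cn^{-\log\log n}$; since the number of $j$'s is $N=bn$, a union bound over $j$ still leaves probability $\ge 1-n^{-c\log\log n}$. Feeding this into the self-consistent identity~\eqref{eqdimn}–\eqref{boundpert} and using both parts of Lemma~\ref{discrete_int} together with the event $\Tt$ exactly as in~\eqref{firsttobound}–\eqref{Cszero}, one finds that $\mn$ satisfies the perturbed equation~\eqref{perta} with $|\delta(E+i\eta)|\le Cs_\eta$. Since $s_\eta\le s_{\eta''}\le c_1\sqrt\kappa$ under the stated lower bound on $\eta''$, Lemma~\ref{stability} (applied with $\eta'$ there equal to our $\eta''$, and $\delta_1=Cs_{\eta''}$) upgrades this to $|\mP(E+i\eta)-\mn(E+i\eta)|\le C\delta_1/\sqrt{\kappa+\eta}$ for all $\eta\in[\eta'',1]$. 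But wait — I need the bound~\eqref{ggo} with $s_\eta$ at the \emph{running} $\eta$, not at $\eta''$; so instead I would apply Lemma~\ref{stability} freshly at each target $\eta_0\in[\eta'',1]$ with $\eta'$ (of the lemma) set to $\eta_0$, $\delta_1=Cs_{\eta_0}$, which is legitimate because~\eqref{threeconditions} on $[\eta_0,1]\subseteq[\eta'',1]$ supplies the required input distance~\eqref{constdistance}. This yields $|\mP(E+i\eta_0)-\mn(E+i\eta_0)|\le C s_{\eta_0}/\sqrt{\kappa+\eta_0}\le (\log n)^{2C_T\log\log n}/\sqrt{n\eta_0\kappa}$, which is~\eqref{ggo}.

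Finally, the ``in particular'' clause: provided $\eta''\ge n^{-1}\kappa^{-2}(\log n)^{4C_T\log\log n}$, the bound~\eqref{ggo} just proved is, for every $\eta\in[\eta'',1]$, bounded by $(n\eta\kappa)^{-1/2}(\log n)^{2C_T\log\log n}\le \kappa^{1/2}(\log n)^{2C_T\log\log n-C_T\log\log n}\cdots$; more simply, $(\log n)^{2C_T\log\log n}/\sqrt{n\eta\kappa}\le c_1\sqrt{\kappa+\eta}$ holds because $n\eta\kappa\ge \kappa^{-1}(\log n)^{4C_T\log\log n}\ge c_1^{-2}(\log n)^{2C_T\log\log n}$ for $n$ large, so $|\mP-\mn|\le c_1\sqrt{\kappa+\eta}\le c_1\sqrt{\kappa+\eta}$ on all of $[\eta'',1]$, i.e.~\eqref{threeconditions} re-holds on the extended interval. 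The probabilities lose only an additive $n^{-c\log\log n}$ from Proposition~\ref{events}, Lemma~\ref{bounddenom} and the union bound in Lemma~\ref{bounddj}, giving total probability $\ge 1-P(n)-n^{-c\log\log n}$.

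The main obstacle I anticipate is purely bookkeeping: making sure that the constant $c_1$ does not deteriorate when one passes from $[\eta',1]$ to $[\eta'',1]$ and then iterates this lemma many times (as will presumably be done in the proof of Theorem~\ref{thmprod} to descend from $\eta=1$ to $\eta\sim n^{-1}\kappa^{-2}$); one must check that the factor lost at each step of size $n^{-2}$ is truly negligible so that after $O(n^2)$ steps the constant has not drifted, and that the lower threshold $\eta''\ge n^{-1}\kappa^{-2}(\log n)^{4C_T\log\log n}$ is exactly what is needed for $s_{\eta''}\le c_1\sqrt\kappa$ and for Lemma~\ref{bounddenom} to apply. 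The analytic content is entirely contained in Lemmas~\ref{stability}, \ref{bounddj}, \ref{bounddenom} and Proposition~\ref{events}; this lemma is the glue that turns them into a statement valid uniformly down to the optimal scale.
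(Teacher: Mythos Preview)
Your proposal is correct and follows essentially the same route as the paper: a Lipschitz step of size $n^{-2}$ to propagate the a~priori bound \eqref{threeconditions} (weakened by a factor $2$) to $[\eta'',1]$, then on this larger interval invoke \eqref{mtplusI} to bound $M(z)$, Lemma~\ref{bounddj} to bound $\max_j|d_j|$ by $s_\eta$, Lemma~\ref{bounddenom} for the denominators, Lemma~\ref{discrete_int} with the event $\Tt$ to control the discretization error, and finally Lemma~\ref{stability} to close the loop and obtain \eqref{ggo}. Your handling of the ``running $s_\eta$ versus $s_{\eta''}$'' issue by applying Lemma~\ref{stability} on each $[\eta_0,1]$ is in fact slightly more careful than the paper, which writes $Cs_\eta/\sqrt{\kappa+\eta}$ directly (implicitly using that the proof of Lemma~\ref{stability} gives the pointwise bound $C|\delta(z)|/|1-K(z)|$ rather than only the sup bound $C\delta_1/\sqrt{\kappa+\eta}$).
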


\begin{proof}
By a trivial continuity argument, first  we prove that 
\begin{equation}
|\mP(E+i\eta)-\mn(E+i\eta )| \leq  2c_1\sqrt{\kappa+\eta}
\label{threeconditionssecond}
\end{equation}
holds for all $\eta\in [\eta'',1]$ with probability at least $1-P(n)$. 
Indeed, the functions  $\mP(E+i\eta)$ and $\mn(E+i\eta)$ are  Lipschitz  continuous for fixed $E$ and $\eta\in [\eta'',1]$  
with derivatives bounded by $(1/\eta'')^2$. 
In particular, the derivative of $\mn$ with respect to $\eta$ 
is uniformly bounded on $[n^{-1}\kappa^{-2}(\log n)^{4C_T\log\log n},1]$ by $Cn^{2}\kappa^4(\log n)^{-8C_T\log\log n}$. 
By this continuity and the proximity of $\mn(E+i\eta')$ and $\mP(E+i\eta')$ 
given by~\eqref{threeconditions}, 
the inequality~\eqref{threeconditionssecond}  holds  
for all $\eta\in[\eta'',1]$.

Now we show that  the  stronger  estimate~\eqref{threeconditions}
 can be regained from the weaker estimate~\eqref{threeconditionssecond}   for all $\eta\in [\eta'',1]$.  
Assuming that $c_1$ is sufficiently small,  Lemma~\ref{diffm}, the first inequality of~\eqref{threeconditionssecond} 
and  inequality~\eqref{mtplusI} of Lemma~\ref{Kzero} 
along with the spectral theorem give the 
bound $M(E+i\eta)<C$ for all $\eta\in[\eta'',1]$.
Recalling the definition of $s_\eta $ from~\eqref{defs},  
we use the bound on $M$  and Lemma~\ref{bounddj} for large enough $n$ to obtain
\begin{equation*}
\max_{1\leq j\leq  N   }|d_j(E+i\eta)|<  s_\eta 
\end{equation*}
with probability at least $1-P(n)-n^{-c\log\log n}$ for all $\eta\in[\eta'',1]$. 

Using \eqref{range} and \eqref{threeconditionssecond}, we get
that $|\mn(E+i\eta )|\ge \frac{1}{2}c$ 
 for  all $\eta\in[\eta'',1]$ if $c_1$ is sufficiently small,
where $c$ is the constant from Lemma~\ref{Kzero}.
 Then 
from Lemma~\ref{bounddenom}, we see that, with probability at least $1-P(n)-n^{-c\log\log n}$,
\begin{equation}\label{lowbjb}
 \min_{1\leq j\leq  N  }  |1+\rjs(\Bj-(E+i\eta))^{-1} \rj | \geq c'
\end{equation}
with some positive constant $c'$. 
Using~\eqref{lowbjb}
 to bound the denominators 
and an argument just like the one used for the point 
$E+i$ in Lemma~\ref{start}, 
\begin{equation}\label{L39}
\left|\mn(E+i\eta)-\int \frac{1}{\lambda(1-\oa -\oa (E+i\eta)\mn(E+i\eta))-(E+i\eta)} \fdl \right|  \leq  Cs_\eta 
\end{equation}
for all $\eta\in[\eta'',1]$. 
By Lemma~\ref{stability} we obtain 
\begin{equation}\label{goodbound}
|\mP(E+i\eta)-\mn(E+i\eta)|\leq \frac{C s_\eta}{\sqrt{\kappa+\eta}}
  \le \frac{ (\log n)^{2C_T \log\log n}}{ \sqrt{n\eta\kappa}}
\end{equation}
for all $\eta\in [\eta'',1]$  with probability at least $1-P(n)-n^{-c\log\log n}$.
Since 
$$
\eta\ge \eta''>n^{-1}\kappa^{-2}(\log n)^{4C_T \log\log n},
$$
 we have
$$
s_\eta < (\kappa+\eta)(\log n)^{-2C_T \log\log n},
$$ 
which yields~\eqref{threeconditions} with a very high probability for all 
$\eta\in [\eta'',1]$ and for sufficiently large $n$.
\end{proof}

\begin{proof}{\bf of Theorem~\ref{thmprod}}
The proof of $i)$ follows directly from the previous two lemmas.
Set $\eta_p = 1 - pn^{-2}$, $p=0,1,2,\ldots $, then~\eqref{thmclaimone} is
proved in Lemma~\ref{start} for $\eta =1$ with probability at least $1-n^{-c\log\log n}$. 
Lemma~\ref{continuity} then implies that
\begin{equation}\label{gg4}
|\mP(E+i\eta)-\mn(E+i\eta)|
  \le \frac{ (\log n)^{2C_T \log\log n}}{ \sqrt{n\eta\kappa}}
\end{equation}
holds for all $\eta\in [\eta_1, \eta_0=1]$ with the possible
exception of a set of probability $2n^{-c\log\log n}$.
Iterating Lemma~\ref{continuity}, we get that
\eqref{gg4} holds for all $\eta \in [\eta_p ,1]$
with probability at least $1- (p+1)n^{-c\log\log n}$
as long as $\eta_p\ge n^{-1}\kappa^{-2} (\log n)^{4 C_T \log\log n} $.
This proves  the first part of Theorem~\ref{thmprod} for a fixed energy $E$.
To take care of all energies simultaneously,  we use
that the derivative of $\mn$ is uniformly bounded by $C\eta^{-2}$.
Thus  we can discretize the energy range to $Cn^2$ points and 
take the union bound to obtain $i)$ of Theorem~\ref{thmprod}.

Now we prove $ii)$. 
Set 
\begin{equation*}
\rho_\eta(E)=\frac{1}{\pi}\Im \mn(E+i\eta)=\frac{1}{\pi n}\sum_{k=1}^n\frac{\eta}{(\mu_k-E)^2+\eta^2} .
\end{equation*}
Given Lemma~\ref{bins}, the argument for Corollary~2.2 in~\cite{ESY09} gives 
\begin{equation*}
\P \left( \sup_{E\in\Eck}\rho_{\eta}(E)>K  \right)\leq 1-Cnn^{-\log\log n} 
\end{equation*}
where $K$ is the constant in Lemma~\ref{bins}. 
Using this inequality,  $ii)$ follows from the argument given to prove the analogous claim 
in Corollary~4.2 of~\cite{ESY09}. 
\end{proof}

\begin{proof}{\bf of Lemma~\ref{bounddenom}}
From~\eqref{keyident2} we have  
\begin{eqnarray}
\mn(z)=\frac{-1}{1+\on \tr T\Bnzi  }\frac{1}{z}(1-A-B)\label{tracedenom}
\end{eqnarray}
where 
\begin{equation*}
A:= \frac{1}{N}\sum_{j=1}^N\frac{\on \tr T \Bjzi -\on \tr T\Bnzi}{1+\rj^*\Bjzi\rj}
\end{equation*}
and
\begin{equation*}
B:= \frac{1}{N}\sum_{j=1}^N\frac{\rj^*\Bjzi \rj-\on\tr T\Bjzi}{1+\rj^*\Bjzi\rj}.
\end{equation*}
We first give a bound on $A$. 
Using~\eqref{ident1} and the resolvent identity we have 
\begin{equation*}
\tr T \Bnzi - \tr T\Bjzi=\frac{1}{1+\rj^*\Bjzi\rj}\rjs \Bjzi T\Bjzi \rj.
\end{equation*}
Using Lemma~\ref{boundsB}, 
\begin{eqnarray}
\left|\E\on \rjs \Bjzi T\Bjzi \rj\right|\leq \frac{C}{n^2}\tr |\Bj-zI|^{-2}\leq \frac{C}{n\eta},
\end{eqnarray}
which for large $n$ can be made smaller than any constant. 
Similarly, 
\begin{eqnarray*}
\frac{1}{n^2}\sqrt{\sum_{k,l}|(T|\Bj-zI|^{-2})_{k,l}|^2}&\leq&\frac{C}{n^2}\sqrt{\tr |\Bj-zI|^{-4}}\\
&\leq &\frac{C}{n^2}\sqrt{\frac{1}{\eta^2}\tr |\Bj-zI|^{-2}}\\
&\leq&\frac{C}{(n\eta)^{3/2}},
\end{eqnarray*}
which is also smaller than any constant for large $n$. 
Using the resolvent identity, Lemma B.2 of~\cite{EYY11} and a union bound, for any constant $\epsilon>0$, 
\begin{small}\begin{eqnarray}
\lefteqn{\P\left(\max_{1\leq j\leq  N  }\left| \on \tr T \Bnzi -\on \tr T\Bjzi\right|\geq \epsilon \right)} \label{boundA}
\\
&\leq &\P\left(\max_{1\leq j\leq  N  }\left| \on
 \tr T \Bnzi\rj\rjs\Bjzi- \E\on \rjs \Bjzi T\Bjzi \rj \right|\geq \frac{\epsilon}{2}  \right)\nonumber\\
&\leq  & n^{-c\log\log n}.\nonumber
\end{eqnarray}\end{small}
Now we look at $B$. 
By Lemma~\ref{boundsB},  
$\|T(\Bj-zI)^{-1}\|_{HS}\leq C \sqrt{\frac{n}{\eta}}$. 
In this case,  again for any constant $\epsilon>0$ and for $n$ large enough, 
\begin{equation*}
\frac{(\log n)^{C}}{n}\|T\Bjzi\|_{HS}\leq C\frac{(\log n)^{C}}{\sqrt{n\eta}}\leq \epsilon.
\end{equation*}
Therefore, by Lemma~B.2 of~\cite{EYY11}
\begin{equation}
\P\left(\max_{1\leq j\leq n}| \rjs \Bjzi\rj -\on\tr T\Bjzi|> \epsilon \right)\leq n^{-c\log\log n}.\label{boundB}
\end{equation} 
If $|1+\rjs\Bjzi\rj|>c$, then by setting $\epsilon=c^2$ in~\eqref{boundA} and~\eqref{boundB}, 
with probability at least $1- n^{-c\log\log n}$, we have
$|z|^{-1}(1-|A|-|B|)\geq (2\mhigh)^{-1}$, 
so that, by~\eqref{tracedenom}, 
\begin{equation*}
\left|\frac{1}{1+\on\tr T\Bnzi}\right|\leq 2\mhigh |\mn(z)|.
\end{equation*}
By~\eqref{boundA} and~\eqref{boundB}, with probability at least $1-n^{-c\log\log n}$, 
\begin{equation*}
\max_{1\leq j\leq  N }\left| \frac{1}{1+\rjs \Bjzi\rj}\right|\leq 4\mhigh|\mn(z)|.
\end{equation*}
\end{proof}

\begin{proof}{\bf of Theorem~\ref{deloc}}
Recall that  $\Xo$ denotes the $n\times  (N-1)  $ matrix obtained by removing the first column of $X$. 
Consider the component $v_1$ of $v$.  
Following the proof of Theorem 1.2 in~\cite{ESY09a}, 
\begin{eqnarray}
|v_1|^2&=&(1+x_1^* \Tih \Xo(\mu-\Xo^*\Ti \Xo)^{-2}\Xo^* \Tih x_1)^{-1}.\label{v}
\end{eqnarray}
Let $\Tih\Xo$ have the singular value decomposition
\begin{equation*}
\Tih\Xo=\sum_{k=1}^{n-1}\sqrt{\mu^{(1)}_k}u_kw_k^*.
\end{equation*}
When $\To$ and $\X$ occur, 
$\min(\mu^{(1)}_1,\ldots,\mu^{(1)}_{n-1})\geq \oh\am(1-\frac{1}{\sqrt{b}})^2$. 
As in~\cite{ESY09a}, we now partition $\Eck$ into subintervals of length $\eta$. 
If $\mu$ is in the interval $I_a=[a-\frac{\eta}{2},a+\frac{\eta}{2}]$, then 
\begin{eqnarray*}
\eqref{v}&=&\left(1+\sum_{k=1}^{n-1}\frac{\mu^{(1)}_k|u_k^* x_1|^2}{(\mu-\lambda_k)^2}\right)^{-1}\\
&\leq &\left( \sum_{k:\lambda_k\in I_a}\frac{\mu^{(1)}_i|u_k^* x_1|^2}{(\mu-\lambda_k)^2}\right)^{-1}\\
&\leq&\left(\oh\am \left(1-\frac{1}{\sqrt{b}}\right)^2\right)^{-1} \left( \frac{1}{\eta^2}\sum_{k:\lambda_k\in I_a}|u_k^* x_1|^2\right)^{-1}
\end{eqnarray*}
so that, setting $\xi_k=|\sqrt{n} x_1^*u_1|$,   
\begin{equation}
\P(|v_1|>t)\leq \P\left(\sum_{k:\lambda_k\in I_a}\xi_k\leq  \left(\oh\am (1-b^{-\oh})^2\right)\frac{ \eta^2n  }{t^2   }\right).\label{probv1}
\end{equation}
The eigenvalues of $\Tih\Xo\Xo^*\Tih$ and $\Tih X X^*\Tih$ interlace (see page~82 of~\cite{Bha97}), so that 
the number of eigenvalues of each matrix in an interval differs at most by two. 
We use Lemma~\ref{like4.7ofWegner} and an argument similar to the proof of Lemma~\ref{bins} to 
obtain the optimal  $t$, $t=\eta^{\oh}$,  and a bound for~\eqref{probv1}. 
We partition $\Eck$ into at most $n$ subintervals of the form $I_a$ and take the 
union bound for the terms of the form~\eqref{probv1} corresponding to the subintervals and the $b n$ indices. 
\end{proof}
\section{General MANOVA Matrices }\label{genmanova}

Now we turn to the proof of Theorem~\ref{thmmanova}, which follows easily from Theorem~\ref{thmprod}.

\begin{proof}{\bf of Theorem~\ref{thmmanova}}
We recall the observations that 
\begin{equation}
\Matrix \label{M1}
\end{equation}
and 
\begin{equation}
(Y \Ys)^{\oh}(X\Xs+Y\Ys)^{-1}(Y\Ys)^{\oh}\label{M2}
\end{equation}
have the same eigenvalues and 
\begin{equation*}
(Y \Ys)^{\oh}(X\Xs+Y\Ys)^{-1}(Y\Ys)^{\oh}=(I+(Y\Ys)^{-\oh}X\Xs(Y\Ys)^{\oh})^{-1}.
\end{equation*}
Note that by factoring $YY^*$ as two factors of $(YY^*)^{\oh}$ rather than $Y$ and $Y^*$, 
we avoid changing the number of zero eigenvalues. 
If $(Y\Ys)^{-\oh}X\Xs(Y\Ys)^{-\oh}$ has  eigenvalues $\mu_1, \mu_2, \ldots \mu_n$, then~\eqref{M2}, 
and hence~\eqref{M1}, has  eigenvalues $\lambda_k=(1+\mu_k)^{-1}$, $k=1,2,\ldots, n$,
i.e. the correspondence $\lambda = (1+\mu)^{-1}$ maps one set of eigenvalues into the other one.
 Since the eigenvalues of $(Y\Ys)^{-\oh}X\Xs(Y\Ys)^{-\oh}$ are positive, this correspondence is regular. 
The correspondence also maps the interval $[\mlow,\mhigh]$ to $[\lm,\lp]$ and the regions $\Eek$ to $\EMe$ 
and $\Eck$ to $\EMc$. 
The same transformation gives the correspondence between $f_p$ and $f_M$:
$$
   f_p(\mu) = \frac{1}{(1+\mu)^2} f_M\Big( \frac{1}{1+\mu}\Big).
$$
An easy calculation shows the relation between the Stieltjes transforms. Setting $z'= z^{-1}-1$,
after a change of variables, we have
$$
   m_p(z') = \int \frac{1}{\mu-z'}f_p(\mu) d\mu  =  - \int \frac{\lambda z}{\lambda- z}
f_M(\lambda) d\lambda = -z -z^2 m_M(z).
$$
Similarly
$$
   m_{n,p}(z') = \frac{1}{n}\sum_{k=1}^n \frac{1}{\mu_k-z'} 
  = - \frac{z}{n}\sum_k \frac{\lambda_k }{\lambda_k-z} = - z- z^2 m_{n,M}(z).
$$
so
\begin{equation}
  m_{n,M}(z) - m_M(z) = -z^{-2}( m_{n,p}(z') -  m_p(z')).
 \label{MMM}
\end{equation}
(Strictly speaking, we  defined Stieltjes transforms $m(z)$
for $z\in \C^+$, but the formula \eqref{ST} clearly defines
it for all $z\in \C\setminus\R$ and we have $\overline{m(z)} = m(\bar z)$.) 
Since the sets $\Eck$ and $\EMc$ are separated away from zero,
we always have a positive lower bound on $z$ and $z'$. 
Notice that $\Im z' = - |z|^{-2}\Im z$, therefore $\Im z'$ and $\Im z$ are
comparable.  Thus part $i)$ of  Theorem~\ref{thmmanova} follows from 
  \eqref{MMM} and part $i)$ of Theorem~\ref{thmprod}.
One can similarly conclude part $ii)$ of  Theorem~\ref{thmmanova}
from part $ii)$  of Theorem~\ref{thmprod}.
 This proves Theorem~\ref{thmmanova}.
\end{proof}

\appendix
\section{Computing $m_M$}\label{mNcomp}
We refer to~\cite{Far11} for the calculation of $m_M(z)$. 
There the Stieltjes transform of 
\begin{equation*}
f(x)=\frac{a}{a+b}\delta(x)+ \frac{\sqrt{(x-\lm)(\lp-x)}}{2\pi x(1-x)}I_{[\lm,\lp]}+\Big(1-\frac{b-1}{b}\Big)\delta(x-1)
\end{equation*}
is obtained, which we will denote $m_f(z)$ here. 
The result is 
\begin{equation}\label{mf}
   m_f(z) = \frac{z-\frac{a+1}{a+b}+\sqrt{z^2-(2\frac{a+1}{a+b}-\frac{a+1}{(a+b)^2})z+(\frac{a-1}{a+b})^2}}{2z(1-z)}.
\end{equation}
This is seen by setting $p=\frac{a}{a +b}$ and $q=\frac{1}{a+b}$ 
in the notation of~\cite{Far11} (note, though, that the point mass at zero  in $f(x)$ has been removed in~\cite{Far11}). 
We then have that 
\begin{equation*}
m_M(z)=(a+b)\left(m_f(z)+\frac{a}{a+b}\frac{1}{z}-\frac{b-1}{a+b}\frac{1}{1-z}\right).
\end{equation*}
Inserting \eqref{mf} into this formula, we obtain \eqref{mN}.

\def\cprime{$'$} \def\cprime{$'$} \def\cprime{$'$} \def\cprime{$'$}

\end{document}